\documentclass[12pt]{amsart}

\usepackage[a4paper,left=3.2cm,right=2.3cm,top=3.5cm,bottom=3.4cm]{geometry}

\usepackage{caption}
\usepackage{longtable}
\usepackage{graphicx}
\usepackage{epsfig}
\usepackage{color}
\usepackage{array}
\usepackage{amsmath}
\usepackage{amsthm}
\usepackage{amssymb}
\usepackage{enumerate}
\usepackage{amscd}
\usepackage[bookmarksnumbered,bookmarksopen,pdfusetitle,unicode]{hyperref}
\usepackage[all,cmtip]{xy}
\usepackage{pgf,tikz}
\usepackage{epsfig}
\usepackage{newlfont}
\usepackage[english]{babel}

\usepackage{mathrsfs}
\usepackage{color}
\usepackage{xcolor}
\usetikzlibrary{arrows}
\usepackage{float}
\usepackage{stix}
\usetikzlibrary {positioning}

\numberwithin{equation}{section}

\theoremstyle{plain}
\newtheorem{thm}{Theorem}[section]
\newtheorem{cor}[thm]{Corollary}
\newtheorem{lem}[thm]{Lemma}
\newtheorem{prop}[thm]{Proposition}

\theoremstyle{definition}
\newtheorem{rem}[thm]{Remark}
\newtheorem{ex}[thm]{Example}
\newtheorem{defn}[thm]{Definition}

\renewcommand {\epsilon}{\varepsilon}
\renewcommand {\le}{\leqslant}
\renewcommand {\ge}{\geqslant}
\renewcommand {\leq}{\leqslant}
\renewcommand {\geq}{\geqslant}

\graphicspath{ {./figures/} }
\usepackage{dynkin-diagrams}
\usepackage[all]{xy}
\usepackage{color}

\usepackage{subfig}

\usepackage{epstopdf}
\begin{document}

\title{Matrix factorization of quotient singularities of type A and D} 

\author{Ayse Sharland}
\address{Department of Mathematics and Applied Mathematical Sciences, University of Rhode Island, Kingston, RI 02881, US}
\email{aysharland@uri.edu}

\author{Meral Tosun}
\address{Department of Mathematics, Galatasaray University, Ortak{\"o}y 34357, Istanbul, T\"urkiye \\
Kavli Institute for Physics and Mathematics of the Universe (WPI), The University of Tokyo, 5-1-5 Kashiwanoha, Kashiwa, Chiba, 277-8583, Japan}
\email{mrltosun@gmail.com}

\subjclass[2000]{Primary: 14B05, Secondary: 13C14, 13A50}


\begin{abstract} In this article, we extend the classical relationship between ADE singularities and their maximal Cohen-Macaulay modules to quotient singularities of types A and D. We identify projected hypersurfaces which allow us to construct explicit matrix factorizations, and in return, families of maximal Cohen-Macaulay modules. These hypersurfaces also yield the dual resolution graphs of the corresponding quotient singularities via Oka's method. Finally, we show that all special Cohen-Macaulay modules over the original quotient singularity can be recovered from those over the projected hypersurfaces. \end{abstract}

\maketitle

\begin{center}
\emph{To the memory of L{\^e} D{\~u}ng Tr{\'a}ng, with  gratitude.}
\end{center}

\tableofcontents

\parskip12pt

\section{Introduction}

Quotient singularities occupy a central position in algebraic geometry, singularity theory, and representation theory. They arise as quotients of smooth varieties by finite group actions and constitute one of the most fundamental classes of rational singularities. Besides their intrinsic geometric interest, they appear naturally in the minimal model program, the McKay correspondence, moduli theory, and mathematical physics. 

The local classification of quotient singularities arising from discontinuous group actions was given by Prill (\cite{Prill}). Brieskorn's classification \cite{Brieskorn} of quotient surface singularities associated with finite small subgroups of $\mathrm{GL}(2,\mathbb C)$ organizes them into cyclic, dihedral, and exceptional families. In the Gorenstein case, corresponding to finite subgroups of $\mathrm{SL}(2,\mathbb C)$, one obtains the classical ADE (or Kleinian) surface singularities. These provide the first and best understood examples of quotient singularities. Together with Artin's work on the geometry of minimal resolutions (Dynkin diagrams), they reveal deep connections between geometry, Lie theory, and representation theory. 

In the following years, the study of ADE singularities advanced further by the classical McKay correspondence due to McKay (\cite{McKay}) who observed a remarkable relation between finite subgroups of $\textnormal{SL}(2,\mathbb C)$ and the Dynkin diagrams (also see \cite{GonzalezSprinbergVerdier} for a geometric realization of this correspondence). In this article, we focus on maximal Cohen-Macaulay modules (MCM modules, for short) in parallel with matrix factorizations. MCM modules are particularly important since their depth is equal to the dimension of the ring and they provide information closely related to the resolution of the singularity. 

If a singularity is defined by a hypersurface equation, MCM modules can explicitly be described by matrix factorizations. In \cite{Eisenbud}, Eisenbud showed that matrix factorizations of the defining equation give rise to MCM modules over the hypersurface ring. This connection has since become an important tool in the study of hypersurface singularities and their CM modules (see, for example, \cite{Knorrer,BuchweitzGreuelSchreyer,Yoshino}). For the classical ADE singularities, the indecomposable MCM modules can be described explicitly by matrix factorizations as studied in \cite{KajiuraSaito}. In general, however, quotient singularities are not hypersurfaces. Nevertheless, these results motivate our focus on projected hypersurface equations. While projected hypersurfaces are non-isolated, their Cohen-Macaulay (CM, for short) representation theory can be considerably richer (see, for example, \cite{BurbanDrozd}). Therefore, it is natural to ask whether one can still use matrix factorizations on projected hypersurfaces to recover the \textit{special} CM modules (Definition \ref{def-spec}) of the original quotient singularity. The aim of this paper is to answer this question for cyclic and dihedral quotient singularities.

After recalling some preliminary definitions and results in Section \ref{sect-prem}, we cover the construction for cyclic quotient singularities in two categories: the Gorenstein and non-Gorenstein case. For the former, as studied in \cite{KajiuraSaito}, the MCM modules can be described directly in terms of matrix factorizations. We recall these results in Section \ref{sect-gor}, and then study the non-Gorenstein case in detail. 

For quotient singularities which are not necessarily Gorenstein, not every indecomposable MCM module directly corresponds to an exceptional curve. Wunram (\cite{Wunram}) introduced the class of special CM modules and proved that the non-free indecomposable special CM modules are in bijection with the irreducible exceptional curves of the minimal resolution. Thus,  special CM modules form a distinguished finite collection inside the category of MCM modules and reflect the geometry of the exceptional divisor. This point of view was further developed by Iyama and Wemyss \cite{IyamaWemyss} and, in the cyclic and dihedral cases, through the reconstruction algebras of type A and D in \cite{WemyssA, WemyssDI, WemyssDII}. More recently, an almost complete classification of indecomposable reflexive modules over quotient surface singularities was obtained in \cite{ArciniegaCisnerosRomano}.

Furthermore, in the case of non-Gorenstein cyclic and dihedral singularities, the invariant rings are no longer hypersurfaces. For those types, we start with finding projected hypersurfaces to build matrix factorizations (see equation \ref{eq-anq} and Theorem \ref{projected-hypersurface-D}). We also choose those equations so that one can recover the dual resolution graphs of the corresponding singularities using Oka's combinatorial method introduced in \cite{oka1} (also see \cite{oka2} and \cite{MAG})\footnote{For the cyclic case, we need to consider a linear transformation and blowing down $(-1)$-curves to match the dual resolution graphs. We note these in Remark \ref{rem-anq-oka}.}

For non-Gorenstein cyclic singularities, we construct an explicit finite family of $2\times 2$ matrix factorizations in Proposition \ref{cyclic-mf} and identify the reflexive hulls of the resulting rank-one MCM modules over the original quotient ring in Theorem \ref{cyclic-comparison}. This way, all indecomposable special MCM modules of the cyclic quotient singularity are recovered. We note that the projected hypersurface may have further indecomposable MCM modules, but these are not needed for this purpose. Moreover, the choice of the finite family is not unique: as shown in Example \ref{ex1}, non-isomorphic MCM modules over the projected hypersurface may give rise to the same reflexive module over the original quotient singularity.

We next study dihedral quotient singularities in Sections \ref{sect-dnq}-\ref{sect-dnq2}. In order to find a projected hypersurface, we use the explicit generators and relations given by Riemenschneider in \cite{riemen} (also see Proposition \ref{reduced-ideal})  and obtain a projected hypersurfaces associated with the original quotient singularity (Theorem \ref{projected-hypersurface-D}). Then we construct families of $2\times 2$ and $4\times 4$ matrix factorizations of the projected hypersurface, giving rank-one and rank-two MCM modules, respectively. Using the descriptions of special CM modules in \cite{WemyssDI,WemyssDII}, we show how their reflexive hulls recover the special CM modules associated with the exceptional curves of the minimal resolution. Thus, although passing to a projected hypersurface introduces additional MCM modules, a finite collection of explicit matrix factorizations is sufficient to recover the special CM modules of the original quotient singularity.


\section{Preliminaries}\label{sect-prem}

\subsection{Matrix factorizations} Let $S=\mathbb C[x_1,\ldots,x_n]$ and let $f\in S$ be a non-zero polynomial.
Matrix factorizations were introduced by Eisenbud \cite{Eisenbud} in his
seminal work on free resolutions over hypersurface rings. He proved that if
$R:=S/(f)$ is a hypersurface ring, then every MCM
$R$-module admits an eventually $2$-periodic minimal free resolution. The
periodic part is determined by a pair of matrices $(\varphi,\psi)$ satisfying
\[\psi\varphi=\varphi\psi=f\,\operatorname{Id}\] which establishes a fundamental connection between hypersurface singularities and MCM modules.

A matrix factorization of $f$ consists of two finite rank free $S$-modules $M_0,M_1$ together with homomorphisms
$$
\varphi\colon M_0\longrightarrow M_1,\qquad \psi\colon M_1\longrightarrow M_0$$
such that $\psi\varphi=f\,\mathrm{Id}_{M_0}$ and $\varphi\psi=f\,\mathrm{Id}_{M_1}$. 
Equivalently, it is a $\mathbb Z/2\mathbb Z$-graded free module
$M=M_0\oplus M_1$ equipped with the odd differential
$$
d=\begin{pmatrix}
0&\psi\\
\varphi&0
\end{pmatrix},
\qquad
d^2=f\,\mathrm{Id}_M
$$
We denote the category of matrix factorizations of $f$ by $\mathrm{MF}(S,f)$. Throughout this paper we consider only \textit{reduced} matrix factorizations, those having no trivial direct summands. A morphism
$$
(m_0,m_1)\colon (\varphi_M,\psi_M)\longrightarrow (\varphi_N,\psi_N)$$
consists of homomorphisms
$m_0\colon M_0\to N_0$ and $m_1\colon M_1\to N_1$ such that
$$
m_1\varphi_M=\varphi_Nm_0, \qquad m_0\psi_M=\psi_Nm_1$$
The category $\mathrm{MF}(S,f)$ is additive and carries the shift functor
$$(M_0,M_1,\varphi,\psi)\longmapsto (M_1,M_0,\psi,\varphi)$$
reflecting its underlying $\mathbb Z/2\mathbb Z$-graded structure.

\subsection{Maximal Cohen-Macaulay modules} Let $R$ be as above and assume that  it is CM.

\begin{defn} A finitely generated $R$-module $M$ is called MCM if
$\operatorname{depth}_R(M)=\dim R$. It is called indecomposable if $M\simeq M_1\oplus M_2$ implies that either $M_1=0$ or $M_2=0$.
The ring $R$ is said to be of finite CM type if it admits only finitely many isomorphism classes of indecomposable maximal CM modules.
\end{defn}

\noindent One of the fundamental sources of MCM modules is provided by syzygies.

\begin{prop}
Let 
$$
M_{k-1}\longrightarrow M_{k-2}\longrightarrow \cdots \longrightarrow M_0$$
be an exact complex of finitely generated free $R$-modules. If
$k\ge\dim R$, then the syzygy
$\textnormal{ker}(M_{k-1}\longrightarrow M_{k-2})$ is a MCM $R$-module.
\end{prop}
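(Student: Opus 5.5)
The proof is a depth count along the complex, using the depth lemma. Here $R$ is a Cohen--Macaulay local ring (or graded with the irrelevant ideal), and $d=\dim R$. Write the complex as
$$
M_{k-1}\xrightarrow{\ \partial_{k-1}\ } M_{k-2}\longrightarrow \cdots \longrightarrow M_1\xrightarrow{\ \partial_1\ } M_0 .
$$
Exactness is understood at the interior spots, and we regard the complex as continuing to the right by its cokernel. Set $N_0=\operatorname{coker}(\partial_1)$ and $N_i=\operatorname{im}(\partial_i)$ for $1\le i\le k-1$. With these conventions the syzygy $\ker(\partial_{k-1})$ is the $k$-th syzygy of $N_0$ in this resolution.

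For each $i$ we have a short exact sequence
$$
0\longrightarrow N_{i+1}\longrightarrow M_i\longrightarrow N_i\longrightarrow 0 ,
$$
with the last one being $0\to \ker(\partial_{k-1})\to M_{k-1}\to N_{k-1}\to 0$.

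The key tool is the depth lemma: for $0\to A\to B\to C\to 0$ we have
$$
\operatorname{depth} A\ \ge\ \min\{\operatorname{depth} B,\ \operatorname{depth} C+1\}.
$$
Since $R$ is Cohen--Macaulay, every nonzero free module $M_i$ has depth $d$. Starting from $\operatorname{depth} N_0\ge 0$, induction on $i$ gives
$$
\operatorname{depth} N_i\ \ge\ \min\{d,\ i\}.
$$
Hence the $k$-th syzygy has depth at least $\min\{d,k\}=d$, because $k\ge d$.

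Since depth never exceeds dimension for a nonzero finitely generated module, the syzygy has depth exactly $d$ and is MCM, or else it is zero. The zero case is trivially allowed, or excluded by convention.

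The only delicate point is bookkeeping. One has to check that the indexing really produces a $k$-th syzygy, and not a $(k-1)$-st one, from the stated complex. If the exactness hypothesis only gives $k-1$ short exact sequences, the count yields $\min\{d,k-1\}$, and we would instead need to use that $N_0$ is itself a submodule of a free module. I would also prove the depth lemma itself via the long exact sequence of $\operatorname{Ext}_R(\mathbb C,-)$ (or local cohomology). It is routine: vanishing of $\operatorname{Ext}^j(\mathbb C,B)$ and $\operatorname{Ext}^{j-1}(\mathbb C,C)$ forces vanishing of $\operatorname{Ext}^j(\mathbb C,A)$.
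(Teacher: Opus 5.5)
The paper states this proposition in its preliminaries as a standard fact and gives no proof, so there is nothing to compare against. Your depth-lemma argument is the standard one, and it is correct.

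Your bookkeeping already works, so the worry in your last paragraph can go. Set $N_0=\operatorname{coker}(\partial_1)$ and $N_k:=\ker(\partial_{k-1})$. The definitions of cokernel and kernel, together with exactness at $M_1,\dots,M_{k-2}$, give exactly $k$ short exact sequences $0\to N_{i+1}\to M_i\to N_i\to 0$ for $0\le i\le k-1$. The count therefore yields $\operatorname{depth}N_k\ge\min\{d,k\}=d$, not $\min\{d,k-1\}$.

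You should delete the fallback about $N_0$ being a submodule of a free module. It is unnecessary, and it is false in general: $N_0$ is a cokernel and can be torsion, e.g. $R/\mathfrak m$.

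The paper's $R=S/(f)$ is a polynomial ring modulo $f$, not a local ring. To cover it, run your argument after localizing at each maximal ideal $\mathfrak m$. Localization preserves exactness and freeness, and $\dim R_{\mathfrak m}\le\dim R\le k$.
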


\noindent  For hypersurface rings, MCM modules are completely
described by matrix factorizations. If $(\varphi,\psi)$ is a matrix factorization of $f$, then
$f\operatorname{coker}(\varphi)=0$, so $\operatorname{coker}(\varphi)$ naturally carries the structure of an
$R$-module. Reducing modulo $f$ yields the $2$-periodic free resolution
$$
\cdots \xrightarrow{\varphi} R^n \xrightarrow{\psi} R^n \xrightarrow{\varphi} R^n \xrightarrow{\psi}
R^n \longrightarrow \operatorname{coker}(\varphi) \longrightarrow 0$$
and hence defines the functor
\begin{eqnarray*}
\operatorname{coker}\colon \operatorname{MF}(S,f)& \rightarrow & \operatorname{CM}(R)\\
  \qquad \qquad (\varphi,\psi) & \mapsto & \operatorname{coker}(\varphi).
\end{eqnarray*}

\begin{cor}\cite[Corollary 6.3]{Eisenbud}\label{eisenbud}
The functor $\textnormal{coker}$  
induces a bijection between the equivalence classes of reduced matrix factorizations of $f$ and the isomorphism classes of maximal CM $A$-modules having no nonzero free direct summand.\end{cor}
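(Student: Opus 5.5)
The plan follows Eisenbud's argument. Here $S$ is regular local (or graded/localized at the origin), $R=S/(f)$, and MCM modules are taken over $R$; the statement's ``$A$'' is $R$. We build the functor in both directions, check that the two constructions are mutually inverse on isomorphism classes, and check that reducedness on one side matches the absence of free summands on the other.

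First, for a matrix factorization $(\varphi,\psi)$ of size $n$ with $f\neq 0$, the map $\varphi$ is injective over $S$, since $\psi\varphi=f\,\mathrm{Id}$ and $S$ is a domain. Hence
$$0\to S^n\xrightarrow{\varphi}S^n\to \operatorname{coker}\varphi\to 0$$
is exact, and $\operatorname{pd}_S \operatorname{coker}\varphi\le 1$. The Auslander--Buchsbaum formula then gives $\operatorname{depth}\operatorname{coker}\varphi\ge \dim S-1=\dim R$, so $\operatorname{coker}\varphi$ is MCM over $R$. Exactness of the $2$-periodic complex over $R$ displayed before the statement follows directly from $\varphi\psi=\psi\varphi=f$: if $\bar\varphi\bar x=0$ then $\varphi x=fy=\varphi\psi y$, so $x=\psi y$ by injectivity. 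If the factorization is reduced, then all entries of $\varphi$ and $\psi$ lie in the maximal ideal. In that case the resolution is minimal, and $\operatorname{coker}\varphi$ has no free summand: a free summand $R$ would have finite projective dimension over $R$ and would split off the periodic minimal resolution, which forces a unit entry.

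Conversely, let $M$ be an MCM $R$-module. Viewed over $S$, $M$ has depth $\dim S-1$, so $\operatorname{pd}_S M=1$ by Auslander--Buchsbaum. Take a minimal presentation $0\to S^m\xrightarrow{\varphi}S^n\to M\to 0$. Since $fM=0$ and $M$ is torsion over $S$, localizing at $(0)$ gives $m=n$. Because $fS^n\subseteq\operatorname{im}\varphi$, there is a unique $\psi$ with $\varphi\psi=f\,\mathrm{Id}$, unique because $\varphi$ is injective. Then $\varphi\psi\varphi=\varphi f$, and injectivity of $\varphi$ yields $\psi\varphi=f$. Minimality makes $\varphi$ reduced. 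The factorization $\psi$ could still have unit entries, contributing a trivial summand $(f,1)$ or $(1,f)$. The summand $(1,f)$ has cokernel $0$ and is removed harmlessly, while the summand $(f,1)$ would give a free summand $R$ of $M$, which is excluded by hypothesis.

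Finally, we compare morphisms. A morphism $(m_0,m_1)$ of factorizations induces a map on cokernels. Conversely, any $R$-linear map $M\to N$ lifts, by freeness, to maps of minimal $S$-presentations, and compatibility with $\psi$ is then forced by injectivity. An isomorphism of modules therefore lifts to an isomorphism of minimal presentations, since both presentations are minimal, and this gives an equivalence of factorizations. Together these steps give the stated bijection. The main obstacle is the bookkeeping of trivial summands: one must show that a reduced factorization gives exactly the cokernels without free summands, and this uses minimality of the resolution over $R$ in both directions.
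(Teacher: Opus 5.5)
Your proposal is correct and is essentially Eisenbud's own argument; the paper does not reproduce it but simply cites it. Your restriction to a local (or graded) $S$ is the right one, since for the paper's non-local $S=\mathbb C[x_1,\ldots,x_n]$ neither minimality nor Auslander--Buchsbaum is available as stated. The only points you assert rather than prove are standard and go through as you indicate: that a unit entry in $\varphi$ or $\psi$ splits off a trivial summand $(1,f)$ or $(f,1)$ by row and column operations, and that a free summand $R$ of $\operatorname{coker}\varphi$ forces such a unit entry (most directly, comparing minimal $S$-resolutions gives $\varphi\sim f\oplus\varphi'$ and hence $\psi\sim 1\oplus\psi'$).
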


\noindent  When $f$ is irreducible and $(\varphi,\psi)$ is a matrix factorization of size $n$, there exist
a unit $u\in S^*$ and an integer $k$ with $0\le k\leq n$ such that
$$\det(\varphi)=uf^k, \qquad \det(\psi)=u^{-1}f^{n-k}.$$
This gives that $\operatorname{rank}_R(\operatorname{coker}(\varphi))=k$ and $\operatorname{rank}_R(\operatorname{coker}(\psi))=n-k$. Moreover,
$$
\operatorname{coker}(\varphi)\cong R^n/\operatorname{im}(\varphi)\cong \operatorname{im}(\psi).$$


\subsection{Quotient surface singularities} Let $G\subset \textnormal{GL}(n,\mathbb C)$ be a finite subgroup acting linearly on $\mathbb C^n$. The quotient $X:=\mathbb C^n/G$ is an affine variety with coordinate ring $R=\mathbb C[x_1,\ldots, x_n]^G$. 
If $G$ is small, that is, if it contains no pseudo-reflections, the image of the origin defines a quotient singularity. By Prill's theorem \cite{Prill}, quotient singularities are, up to analytic isomorphism, obtained in this way. The invariant ring $R$ is a normal domain and, by the Hochster-Eagon theorem \cite{HochsterEagon}, it is CM. Here we consider the two-dimensional case. Thus $R=\mathbb C[x,y]^G$ where $G\subset \textnormal{GL}(2,\mathbb C)$ is a finite small subgroup. Brieskorn \cite{Brieskorn} classified the finite small subgroups of $\textnormal{GL}(2,\mathbb C)$.
He proved that every finite small subgroup of $\textnormal{GL}(2,\mathbb C)$ is conjugate to a group belonging to one of the following families:
$$\mathrm C_{n,q}, \quad (\mathbb Z_{2m},\mathbb Z_{2m};\mathbb D_n,\mathbb D_n), \quad (\mathbb Z_{4m},\mathbb Z_{2m};\mathbb D_n,\mathbb C_{2n}), \quad (\mathbb Z_{2m},\mathbb Z_{2m};\mathbb O,\mathbb O), \quad (\mathbb Z_{2m},\mathbb Z_{2m};\mathbb I,\mathbb I)$$
subject to the corresponding numerical conditions given by Brieskorn in \cite{Brieskorn}. Here 
\begin{equation}\label{def-cnq} \mathcal C_{n,q}=\left\langle \left.
\begin{pmatrix}
\zeta & 0\\
0 & \zeta^q
\end{pmatrix} \right|  \  0<q<n, \ (n,q)=1\right\rangle \subset \textnormal{GL}(2,\mathbb C)\end{equation}
with $\zeta$ as a primitive $n$-th root of unity. We use $\mathbb D_n$, $\mathbb O$ and $\mathbb I$ to denote the binary dihedral, octahedral and icosahedral groups, respectively. The notation $(H_1,H_2;K_1,K_2)$ is that used by Brieskorn in \cite{Brieskorn} where the author also described the weighted dual graphs of the minimal resolutions of the corresponding quotient surface singularities. Here we focus on the cyclic and dihedral families. The geometry of these quotient surface singularities is closely related to the structure of their CM modules. This connection was developed from the representation-theoretic point of view by Auslander \cite{Auslander} and, in relation with the minimal resolution, by Wunram \cite{Wunram}. In particular, Wunram introduced the notion of \textit{special} CM modules and established an important correspondence which we quote below.

\begin{defn}\label{def-spec} Let $R$ be a rational surface singularity and let $\pi\colon\widetilde X\rightarrow\operatorname{Spec}(R)$ be the minimal
resolution. A maximal CM $R$-module $M$ is called \emph{special} if
$H^1(\widetilde X,\widetilde M^\vee)=0$ where $\widetilde M$ denotes the full sheaf associated with $M$.
\end{defn}

\begin{thm}\cite{Wunram}\label{thm-wunram} Let $R=\mathbb C[x,y]^G$ where $G\subset \textnormal{GL}(2,\mathbb C)$ is a finite small subgroup. Let $\pi\colon X\rightarrow\operatorname{Spec}(R)$ be the minimal resolution with exceptional curves $E_1,\ldots,E_r$. Then there is a bijection between the exceptional curves and the isomorphism classes of non-free indecomposable special MCM $R$-modules. If $M_i$ is the special MCM module corresponding to $E_i$, then
$$
\mu_R(M_i)=2\operatorname{rank}_R(M_i).
$$
where $\mu_R(M_i)$ denotes the minimal number of generators of $M_i$ as an $R$-module. In particular, every rank-one special MCM module is minimally generated by two elements.
\end{thm}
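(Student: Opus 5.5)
The plan is to follow Wunram's original argument, using the standard correspondence between reflexive modules on a rational surface singularity and full sheaves on the minimal resolution. For a reflexive $R$-module $M$ of rank $r$, set $\widetilde M=\pi^*M/\text{torsion}$. Then $\widetilde M$ is a locally free sheaf on $X$, generated by global sections, with $H^0(X,\widetilde M)=M$ and $H^1(X,\widetilde M^\vee)=0$ exactly when $M$ is special. Its first Chern class $c_1(\widetilde M)$ is determined by the intersection numbers $c_1(\widetilde M)\cdot E_j$, and these are non-negative because $\widetilde M$ is globally generated. The goal is the map
\[
M\longmapsto c_1(\widetilde M)\in \bigoplus_j \mathbb Z_{\geq 0}\,D_j,
\]
where $D_j$ is the divisor class dual to $E_j$, meaning $D_j\cdot E_i=\delta_{ij}$. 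I want to show that this map sends indecomposable non-free special modules bijectively onto the classes $D_1,\dots,D_r$.

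\textbf{Step 1 (numerical criterion).} Following Wunram and Esnault, I will show that $M$ is special if and only if $\mu_R(M)=2\operatorname{rank}M$ when $M$ has no free summand, and more generally
\[
\mu(M)=\operatorname{rank}M+c_1(\widetilde M)\cdot Z_f
\]
for special modules, where $Z_f$ is the fundamental cycle. This uses Riemann--Roch on $X$ together with the exact sequence
\[
0\to\mathcal O_X^{r}\to\widetilde M\to\mathcal O_D(\ldots)\to 0,
\]
obtained by choosing $r$ general sections. The correspondence is more naturally stated as saying that $\widetilde M$ restricted to a curve transversal to $E_i$ has Chern class $\delta_{ij}$, so that $\mu$ equals $2\cdot$rank.

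\textbf{Step 2 (existence).} For each $E_i$, I construct a special full sheaf $\mathcal M_i$ with $c_1(\mathcal M_i)=D_i$ and rank equal to the multiplicity $r_i$ of $E_i$ in $Z_f$. The construction extends an appropriate line bundle over curvettes, or takes the globally generated bundle obtained as an extension of $\mathcal O_X(D_i)$ by $\mathcal O_X^{r_i-1}$. The vanishing $H^1(\mathcal M_i^\vee)=0$ comes from $R^1\pi_*\mathcal O_X=0$, since the singularity is rational, together with the choice of the extension class.

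\textbf{Step 3 (uniqueness and indecomposability).} Suppose $M$ is special and indecomposable. Speciality makes $c_1$ additive in a controlled way, and a special full sheaf is determined by $c_1$, using the rigidity $\mathrm{Ext}^1(\widetilde M,\widetilde N)=0$ from the $H^1$-vanishing. It follows that a special $M$ decomposes according to $c_1(\widetilde M)=\sum a_jD_j$, giving
\[
M\cong R^{a}\oplus\bigoplus_j M_j^{a_j}.
\]
This yields the bijection. The formula $\mu(M_i)=2\operatorname{rank}M_i$ then follows from Step 1, and in particular a rank-one special module has $\mu=2$.

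The main obstacle is Step 3, namely proving that a special full sheaf is determined by its first Chern class. I would attack this with the deformation and rigidity argument: compare two special sheaves with the same $c_1$ via a generically surjective map built from global sections, then use $H^1(\widetilde M^\vee)=0$ to lift it to an isomorphism. This is essentially where quotient-specific input, namely the rationality of the singularity, is used.
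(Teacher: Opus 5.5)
The paper gives no proof of this statement; it is quoted from Wunram, so your outline has to stand on its own. The overall architecture is sound: full sheaves, the formula $\mu=\operatorname{rank}+c_1\cdot Z_f$, and bundles built as extensions of the line bundle $\mathcal L_i$ with $c_1(\mathcal L_i)=D_i$ by trivial bundles. But Step 3, which carries the whole bijection, is not proved, and as stated it is wrong.

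\textbf{What fails in Step 3.} A special full sheaf is not determined by $c_1$, because $c_1$ does not see trivial summands: $\widetilde M$ and $\widetilde M\oplus\mathcal O_X$ have the same $c_1$. The correct statement concerns the pair $(\operatorname{rank},c_1)$. It also needs the inequality $\operatorname{rank}\widetilde M\ge c_1(\widetilde M)\cdot Z_f$ for special $\widetilde M$. That inequality is what forces $a\ge 0$ in $M\cong R^a\oplus\bigoplus_j M_j^{a_j}$, and what shows $\mathcal M_i$ is indecomposable; you never prove it.

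The mechanism you propose does not close the gap either. The rigidity you invoke is correct: if $H^1(\widetilde M^\vee)=0$ and $\mathcal O_X^k\to\widetilde N$ is onto, then $H^1(\widetilde M^\vee\otimes\widetilde N)$ is a quotient of $H^1(\widetilde M^\vee)^{\oplus k}=0$, since $H^2$ vanishes on $X$. However, vanishing $\operatorname{Ext}^1$ between two bundles with equal invariants does not make them isomorphic. A generically bijective $\phi\colon\widetilde N\to\widetilde M$ with equal rank and $c_1$ has $\det\phi\in H^0(\mathcal O_X)=R$, and it is an isomorphism exactly when $\det\phi$ is a unit. Nothing in your sketch produces such a $\phi$.

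Consequently neither the bijection nor the identity $\mu(M_i)=2\operatorname{rank}M_i$ for \emph{the} module attached to $E_i$ is established. The latter requires knowing that every indecomposable special module is one of your $\mathcal M_i$. The ``if and only if'' in Step 1 is also unargued. All you actually use is $\mu(M)=\operatorname{rank}M+c_1(\widetilde M)\cdot Z_f$, which holds for every full sheaf.

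\textbf{How to fill the gap.} The Artin--Verdier sequence you already wrote down does the job. Let $C=C_1\sqcup\dots\sqcup C_s$ be the curvettes. Dualizing gives $0\to\widetilde M^\vee\to\mathcal O_X^r\to\mathcal O_C(C)\to 0$, so $H^1(\widetilde M^\vee)\cong\operatorname{coker}\bigl(R^r\to\pi_*\mathcal O_C(C)\bigr)$. Here the map is the extension class, and $\pi_*\mathcal O_C(C)\cong\bigoplus_k\mathbb C\{t_k\}$.

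So $M$ is special iff this map is surjective. Its target has $\mu=\sum_k Z_f\cdot C_k=c_1(\widetilde M)\cdot Z_f$, because $\mathfrak m\mathcal O_X=\mathcal O_X(-Z_f)$. Over a local ring, a surjection from a free module is, up to $\operatorname{GL}_r(R)$, a minimal free cover plus a zero map. The minimal cover of a direct sum is the sum of the minimal covers. This gives $r\ge c_1\cdot Z_f$ and $\widetilde M\cong\mathcal O_X^{\,r-c_1\cdot Z_f}\oplus\bigoplus_k\mathcal F_{C_k}$, with $\operatorname{rank}\mathcal F_{C_k}=z_j$ when $C_k$ meets $E_j$.

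It remains to show that $\mathcal F_{C_k}$ depends only on $j$. Split off a basis vector whose class is a unit. This exhibits $\mathcal F_{C_k}$ as an extension of $\mathcal O_X(C_k)\cong\mathcal L_j$ by $\mathcal O_X^{z_j-1}$, whose classes generate $H^1(\mathcal L_j^{-1})$. (Line bundles are determined by $c_1$ because $H^1(\mathcal O_X)=0$.) The same surjection lemma shows that such an extension is unique. This is your Step 2 with the number of trivial summands pinned down.

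Alternatively, use Van den Bergh's result that $\mathcal T=\mathcal O_X\oplus\bigoplus_j\mathcal M_j$ is a tilting bundle. A special $\widetilde M$ has no higher $\operatorname{Ext}$ with $\mathcal T$ in either direction. Hence $\operatorname{Hom}_X(\mathcal T,\widetilde M)$ is projective over $\operatorname{End}_X(\mathcal T)$, which has finite global dimension, and so $\widetilde M\in\operatorname{add}\mathcal T$.
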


\noindent Since quotient surface singularities are normal, their MCM modules are reflexive. Recall that for a Noetherian domain $R$, an $R$-module $M$ is called \textit{reflexive} if the canonical homomorphism
$M\longrightarrow M^{\vee\vee}$ with $M^\vee:=\operatorname{Hom}_R(M,R)$ is an isomorphism.

\noindent For our purposes, it will also be useful to have an intrinsic characterization of special CM modules as quoted in the theorem below. 

\begin{thm} \cite{IyamaWemyss} Let $R$ be a rational surface singularity. Let $M$ be a maximal CM $R$-module. Then $M$ is special if and only if $\operatorname{Ext}^1_R(M,R)=0$.
\end{thm}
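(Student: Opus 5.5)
The plan is to compare the two long exact sequences obtained by dualizing a syzygy sequence, once on $\operatorname{Spec}(R)$ and once on the minimal resolution $\pi\colon\widetilde X\to\operatorname{Spec}(R)$. The only geometric input is rationality, $H^1(\widetilde X,\mathcal O_{\widetilde X})=0$, together with standard facts on full sheaves (Esnault, Artin–Verdier):
\begin{enumerate}[(a)]
\item for an MCM module $M$, the full sheaf $\widetilde M=\pi^*M/\mathrm{tors}$ is locally free and generated by global sections, with $\pi_*\widetilde M=M$;
\item for any locally free sheaf $\mathcal F$ on $\widetilde X$, $\pi_*\mathcal F$ is the reflexive hull of the restriction of $\mathcal F$ off the exceptional locus $E$. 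In particular $\pi_*(\widetilde M^\vee)=M^\vee$.
\end{enumerate}

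First, choose a minimal surjection $R^n\to M$ and let $\Omega M$ be its kernel. The depth lemma on $0\to\Omega M\to R^n\to M\to 0$ shows that $\Omega M$ is MCM. Applying $\operatorname{Hom}_R(-,R)$ and using $\operatorname{Ext}^1_R(R^n,R)=0$ gives the exact sequence
\[
0\to M^\vee\to R^n\to(\Omega M)^\vee\to\operatorname{Ext}^1_R(M,R)\to 0,
\]
so that $\operatorname{Ext}^1_R(M,R)\cong\operatorname{coker}\bigl(R^n\to(\Omega M)^\vee\bigr)$.

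Second, lift this sequence to $\widetilde X$. Since $\widetilde M$ is globally generated, the $n$ generators give a surjection $\mathcal O_{\widetilde X}^n\to\widetilde M$. Its kernel $\mathcal K$ is locally free, being the kernel of a surjection of vector bundles. Applying $\pi_*$, which is left exact, and using $\pi_*\widetilde M=M$ gives $\pi_*\mathcal K=\Omega M$. We must then show $\mathcal K\cong\widetilde{\Omega M}$, so that
\[
0\to\widetilde{\Omega M}\to\mathcal O_{\widetilde X}^n\to\widetilde M\to 0
\]
is an exact sequence of vector bundles. I expect this identification to be the main obstacle. I would use Esnault's characterization: a locally free sheaf $\mathcal F$ with $\pi_*\mathcal F$ reflexive is the full sheaf of $\pi_*\mathcal F$ exactly when $H^1_E(\mathcal F)=0$. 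The local cohomology sequence along $E$ gives
\[
H^0_E(\widetilde M)\to H^1_E(\mathcal K)\to H^1_E(\mathcal O^n).
\]
The left term vanishes because $\widetilde M$ is torsion free. The right term vanishes by rationality: $H^1_E(\mathcal O)$ is dual to $H^1(\widetilde X,\omega_{\widetilde X})$, which is zero by Grauert–Riemenschneider. Hence $H^1_E(\mathcal K)=0$, as required.

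Third, dualize on $\widetilde X$. This remains exact since all three sheaves are vector bundles:
\[
0\to\widetilde M^\vee\to\mathcal O_{\widetilde X}^n\to(\widetilde{\Omega M})^\vee\to 0.
\]
Taking cohomology and using $H^1(\mathcal O_{\widetilde X})=0$ gives
\[
0\to H^0(\widetilde M^\vee)\to R^n\to H^0\bigl((\widetilde{\Omega M})^\vee\bigr)\to H^1(\widetilde X,\widetilde M^\vee)\to 0.
\]
By (b), $H^0(\widetilde M^\vee)=M^\vee$ and $H^0\bigl((\widetilde{\Omega M})^\vee\bigr)=(\Omega M)^\vee$. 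Both sequences arise from the same map $R^n\to M$, so the two maps $R^n\to(\Omega M)^\vee$ agree. Comparing cokernels yields $H^1(\widetilde X,\widetilde M^\vee)\cong\operatorname{Ext}^1_R(M,R)$. In particular $M$ is special in the sense of Definition \ref{def-spec} if and only if $\operatorname{Ext}^1_R(M,R)=0$.
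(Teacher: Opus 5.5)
The paper does not prove this statement itself (it is quoted from Iyama--Wemyss), so your argument has to stand on its own. It fails at exactly the step you flagged.

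The kernel $\mathcal K$ of $\mathcal O_{\widetilde X}^n\to\widetilde M$ is in general \emph{not} the full sheaf of $\Omega M$. Esnault's characterization of full sheaves requires global generation in addition to $H^1_E(\mathcal F)=0$. Your vanishing $H^1_E(\mathcal K)=0$ only re-proves that $\pi_*\mathcal K=\Omega M$ is reflexive. Being a kernel rather than a quotient of $\mathcal O^n$, $\mathcal K$ is typically not globally generated.

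This already fails for $A_1$. Take $R=\mathbb C[x^2,xy,y^2]$ and $M=Rx+Ry$. Then $\widetilde X$ is the total space of $\mathcal O_{\mathbb P^1}(-2)$ with projection $p$, and $\widetilde M=p^*\mathcal O_{\mathbb P^1}(1)$. The two generators give the pulled-back Euler sequence, so $\mathcal K=p^*\mathcal O_{\mathbb P^1}(-1)$, which has degree $-1$ on $E$. But $\Omega M\cong M$, so $\widetilde{\Omega M}\cong p^*\mathcal O_{\mathbb P^1}(1)$. Comparing first Chern classes, your sequence $0\to\widetilde{\Omega M}\to\mathcal O^2\to\widetilde M\to 0$ cannot exist.

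Separately, (b) is false as stated: $\pi_*\mathcal O_{\widetilde X}(-Z)=\mathfrak m$, with $Z$ the fundamental cycle, is not reflexive. However, $\pi_*(\widetilde N^\vee)=N^\vee$ does hold for full sheaves, by the adjunction $\operatorname{Hom}(\pi^*N,\mathcal O_{\widetilde X})=\operatorname{Hom}_R(N,R)$.

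Here is what your method actually gives when run with $\mathcal K$. Dualizing yields $0\to\widetilde M^\vee\to\mathcal O^n\to\mathcal K^\vee\to 0$. The module $H^0(\mathcal K^\vee)$ sits compatibly inside $H^0(\widetilde X\setminus E,\mathcal K^\vee)=(\Omega M)^\vee$. Since $H^1(\widetilde X,\mathcal K^\vee)=0$, you get
\[
0\to H^1(\widetilde X,\widetilde M^\vee)\to\operatorname{Ext}^1_R(M,R)\to H^1_E(\mathcal K^\vee)\to 0 .
\]
This proves that $\operatorname{Ext}^1_R(M,R)=0$ implies $M$ is special. The converse needs $H^1_E(\mathcal K^\vee)=0$ whenever $M$ is special.

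By duality, $H^1_E(\mathcal K^\vee)$ is dual to $H^1(\mathcal K\otimes\omega_{\widetilde X})\cong\pi_*(\widetilde M\otimes\omega_{\widetilde X})/\bigl((M\otimes_R\omega_R)/\mathrm{tors}\bigr)$. By the same duality, and because $H^1(\widetilde M\otimes\omega_{\widetilde X})=0$, specialness says exactly that $\pi_*(\widetilde M\otimes\omega_{\widetilde X})=(M\otimes_R\omega_R)^{\vee\vee}$.

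So the missing step is precisely that $(M\otimes_R\omega_R)/\mathrm{tors}$ is reflexive for special $M$. This is the nontrivial direction of the Wunram--Riemenschneider characterization of special modules, and it is where the real content of the cited theorem lies. It needs genuine geometric input about special full sheaves and does not follow from formally comparing the two dualized syzygy sequences.
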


\noindent We now treat the cyclic and dihedral families separately. For each family, we construct a projected hypersurface model and study its matrix factorizations in relation to the special CM modules of the original quotient singularity.

\section{Cyclic singularities of type $\mathbb A_{n,q}$}\label{sect-anq}

\noindent Let $R_{n,q}:=\mathbb C[x,y]^{\mathcal C_{n,q}}$ be ring of invariants under the action of $C_{n,q}$ on $\mathbb{C}^2$. The quotient singularity
$$
\mathbb A_{n,q}:=\operatorname{Spec}(R_{n,q})
$$
is called the cyclic singularity of type $\mathbb A_{n,q}$. Equivalently it is said to be of type \(\displaystyle \frac{1}{n}(1,q)\)
following  Hirzebruch-Jung continued fraction expansion of $n/q$ which is of the form 
\begin{equation}\label{eq-nq}\frac{n}{q}=b_1-\frac{1}{b_2-\frac{1}{{\vdots \atop b_{r-1}-\frac{1}{b_{r}}}}}\end{equation}
for some integers $b_i\geq 2$, $i=1,\ldots, r$ and $r\geq 2$. We will write $\displaystyle \frac{n}{q}=[b_1,\ldots,b_r]$ for short to emphasize the sequence $b_1,\ldots,b_r$ in the expansion.

\noindent The structure of $R_{n,q}$ and the geometry of the minimal resolution of $\mathbb A_{n,q}$ are encoded by the
Hirzebruch-Jung continued fraction expansion of $n/q$. Furthermore, two important cases arise depending on whether $\mathcal C_{n,q}$ is a subset of $\textnormal{SL}(2,\mathbb C)$ (Gorenstein  where $q=n-1$) or not (non-Gorenstein where $q\neq n-1$). We will describe both cases starting with the Gorenstein case which recovers the classical \textit{Kleinian singularity of type $A_{n-1}$}. Later on, we will refer to this case to make comparisons for the non-Gorenstein case.

\subsection{The Gorenstein case}\label{sect-gor}

\noindent The cyclic singularity $\mathbb A_{n,q}$ is Gorenstein when $q=n-1$. In this case, we have 
$$
\mathcal C_{n,n-1}=\left\langle
\begin{pmatrix}
\varepsilon & 0\\
0 & \varepsilon^{-1}
\end{pmatrix}
\right\rangle \subset \textnormal{SL}(2,\mathbb C)$$
The singularity $\mathbb A_{n,n-1}$ is called the Kleinian singularity of type $A_{n-1}$ and its invariant ring is generated by
$u=x^n$, $v=y^n$ and $w=xy$ with the single relation $uv=w^n$. Thus we get 
$$
R_{n,n-1}\simeq {\mathbb C[u,v,w]}/{(uv-w^n)},
$$
which, after a linear change of coordinates, takes the classical form
$Z^n+X^2+Y^2=0$. The corresponding Hirzebruch-Jung expansion is

\begin{equation}\frac{n}{n-1}=[\underbrace{2,\ldots,2}_{n-1}] \end{equation}
which says that the exceptional divisor of the minimal resolution is a chain of $n-1$ smooth rational $(-2)$-curves. Hence the dual graph of the minimal resolution is of the form depicted in Figure \ref{graf-An}.

\begin{figure}[htbp]
\begin{center}
\begin{tikzpicture}[domain=-1.25:1.25,scale=1.2]
\draw (-2,0) circle [radius=0.1] node[above]{$E_1$} node[below]{$-2$};
\draw[-] (-1.9,0) -- (-1.1,0) ;
\draw (-1,0) circle [radius=0.1] node[above]{$E_2$} node[below]{$-2$};
\draw[-] (-0.9,0) -- (-0.1,0) ;
\draw (0,0) circle [radius=0.1] node[above]{$E_3$} node[below]{$-2$};
\draw[dotted,thick] (0.2,0) -- (0.8,0) ;
\draw (1,0) circle [radius=0.1] node[above]{$E_{n-2}$} node[below]{$-2$};
\draw[-] (1.1,0) -- (1.9,0) ;
\draw (2,0) circle [radius=0.1] node[above]{$E_{n-1}$} node[below]{$-2$};
 \end{tikzpicture}
\caption{Dual resolution graph of $A_{n-1}$ singularity}
\label{graf-An}
\end{center}
\end{figure}

\noindent The McKay correspondence allows us to describe explicitly the MCM modules associated with these exceptional curves. For each $0\leq t\leq n-1$, consider the $R_{n,n-1}$-modules
$$M_t=\left\{\left.\sum_{i,j\geq 0}a_{ij} x^iy^j \right| \ i-j\equiv t\pmod n \right\}$$
By \cite{Auslander}, they form a complete set of pairwise non-isomorphic indecomposable MCM $R_{n,n-1}$-modules, with $M_0=R_{n,n-1}$. The classical McKay correspondence identifies the non-free modules $M_1,\ldots,M_{n-1}$ with the exceptional curves $E_1,\ldots,E_{n-1}$, respectively. Since $R_{n,n-1}$ is Gorenstein,
every MCM $R_{n,n-1}$-module is special.   In particular, $M_1,\ldots,M_{n-1}$ are precisely the non-free indecomposable special
CM modules. The congruence condition in $M_t$ implies that every monomial
in $M_t$ is divisible by either $x^t$ or $y^{n-t}$. Hence we have 
$$M_t=R_{n,n-1}x^t+R_{n,n-1}y^{n-t}
$$
Multiplication by $x^{n-t}$ identifies $M_t$ with the ideal generated by $x^n$ and $x^{n-t}y^{n-t}=(xy)^{n-t}$. Thus, in terms of the invariant coordinates $u=x^n$, $v=y^n$, and $w=xy$, we have
$$
M_t\cong (u,w^{n-t})$$
Using the relation $uv=w^n$, this is also isomorphic to $(v, w^t)$. So, $M_t\cong (u, w^{n-t})\cong(v, w^t)$. 
Since the dual of the character $\varepsilon^t$ is $\varepsilon^{-t}=\varepsilon^{n-t}$, it follows that
$M_t^\vee\cong M_{n-t}$.

\subsubsection{Matrix factorizations} The relation $uv=w^n$ admits the following family of matrix
factorizations $(A_t,B_t)$. For each $1\le t\le n-1$, define
$$A_t=\begin{pmatrix}
v&-w^t\\
-w^{n-t}&u
\end{pmatrix},
\qquad B_t=\begin{pmatrix}
u&w^t\\
w^{n-t}&v
\end{pmatrix}$$
By a direct computation, we obtain $A_tB_t=B_tA_t=(uv-w^n)\textnormal{Id}_2$ where $\textnormal{Id}_2$ is the $2\times 2$ identity matrix over $R_{n,n-1}$.

\subsubsection{Rank-one modules} We now describe the modules $M_1,\ldots,M_{n-1}$ explicitly in terms of
matrix factorizations $(A_t,B_t)$ of the hypersurface $uv-w^n=0$.

\begin{prop} For every $1\leq t\leq n-1$, we have
$$
\operatorname{coker}(A_t)\cong (u,w^{n-t})\cong M_t.$$
Moreover, $M_t$ admits the $2$-periodic free resolution
$$
\cdots\longrightarrow R_{n,n-1}^2\xrightarrow{B_t}R_{n,n-1}^2\xrightarrow{A_t}R_{n,n-1}^2
\xrightarrow{B_t}R_{n,n-1}^2\xrightarrow{A_t}R_{n,n-1}^2\longrightarrow M_t\longrightarrow 0.$$
\end{prop}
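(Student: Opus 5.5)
The plan is to compute $\operatorname{coker}(A_t)$ over $R:=R_{n,n-1}=\mathbb C[u,v,w]/(uv-w^n)$ directly, by identifying it with the image of $B_t$, and then to read off the periodic resolution from the factorization identity $A_tB_t=B_tA_t=(uv-w^n)\mathrm{Id}_2$. Since $R$ is a domain and $\det A_t=\det B_t=uv-w^n$ vanishes in $R$, both $A_t$ and $B_t$ have rank one over the fraction field $K$ of $R$. Consider the columns of $B_t$, namely $(u,w^{n-t})^T$ and $(w^t,v)^T$. They are proportional over $K$: using $uv=w^n$ in $R$, one has $w^t\cdot(u,w^{n-t})^T=u\cdot(w^t,v)^T$. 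Hence the map $R^2\to R$, $(a,b)\mapsto$ first coordinate of $B_t(a,b)^T$, that is $(a,b)\mapsto au+bw^t$, has image the ideal $(u,w^t)$. I will instead use the projection to the second coordinate, $(a,b)\mapsto aw^{n-t}+bv$, whose image is $(w^{n-t},v)$. In the paper's convention the right choice depends on the ordering of rows and columns, so I will pick the row of $B_t$ that produces $(u,w^{n-t})$. Concretely, $\operatorname{coker}(A_t)\cong\operatorname{im}(B_t)$, and $\operatorname{im}(B_t)$ embeds into $R$ via a row projection whose image is an ideal generated by one row, here $(u,w^t)$ or $(w^{n-t},v)$. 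The paper already records $(v,w^t)\cong(u,w^{n-t})\cong M_t$, and $(u,w^t)\cong(v,w^{n-t})\cong M_{n-t}$ by the same rule. I would therefore fix the ordering so that the result matches, possibly transposing or using the rows of $B_t$, since $\operatorname{coker}A_t\cong\operatorname{im}B_t$ as a submodule of $R^2$.

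The key steps, in order, are as follows.

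First, over $R$ the sequence $R^2\xrightarrow{A_t}R^2\xrightarrow{B_t}R^2$ is exact. The inclusion $\operatorname{im}A_t\subseteq\ker B_t$ is immediate from $B_tA_t=0$ in $R$. For the converse inclusion, take $(a,b)\in\ker B_t$. This means $ua+w^tb=0$ and $w^{n-t}a+vb=0$. Since $R$ is a domain and $u,w^t$ are nonzero, we get $b=-ua/w^t$ in $K$. One then shows $(a,b)=A_t(c,d)$ by using that $R$ is normal, so the rank-one reflexive structure forces divisibility; equivalently, one invokes Eisenbud's theorem that for a matrix factorization the reduction mod $f$ is exact. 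This is the standard fact behind Corollary~\ref{eisenbud}, and I would simply cite it. It also yields the $2$-periodic resolution in the statement.

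Second, $\operatorname{coker}(A_t)\cong\operatorname{im}(B_t)$ follows from this exactness.

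Third, I embed $\operatorname{im}(B_t)$ into $R$ by projecting onto one coordinate. The projection is injective on $\operatorname{im}(B_t)$: the two coordinates of any vector in the image satisfy a fixed linear relation over $K$, namely that the image lies in a line in $K^2$, so a vector whose chosen coordinate vanishes is zero because $R$ is a domain. The image of this projection is the ideal generated by the entries of the corresponding row of $B_t$.

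Finally, I identify that ideal with $(u,w^{n-t})\cong M_t$ using the earlier computation.

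The main obstacle is bookkeeping rather than anything conceptual: making sure that the row picked out gives $(u,w^{n-t})$ and not $(u,w^t)$, which would give $M_{n-t}$. If the paper's matrices give $(u,w^t)$ literally, I would use the row $(w^{n-t},v)$ together with the isomorphism $(w^{n-t},v)\cong(u,w^{n-t})$, obtained by multiplying by $u/w^{n-t}=w^t/v\in K$. This isomorphism reduces to $uv=w^n$, and it is this chain of identifications that I expect to need the most care.
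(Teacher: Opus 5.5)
\textbf{There is a genuine gap, in the final identification.} Your first three steps are sound. In one respect they are more complete than the paper's argument, which only checks that two vectors are relations and not that they generate all syzygies. Exactness of $R^2\xrightarrow{A_t}R^2\xrightarrow{B_t}R^2$ follows from the factorization: if $B_tx=fy$ over $S$, then $fx=A_tB_tx=fA_ty$, so $x=A_ty$. Hence $\operatorname{coker}(A_t)\cong\operatorname{im}(B_t)$, and a coordinate projection embeds this rank-one lattice in $R$.

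The rows of $B_t$ are $(u,w^t)$ and $(w^{n-t},v)$. So your computation gives $\operatorname{coker}(A_t)\cong(u,w^t)\cong(w^{n-t},v)$, and by the rule you quote yourself, both are $\cong M_{n-t}$. The isomorphism $(w^{n-t},v)\cong(u,w^{n-t})$ that you propose is false: multiplication by $u/w^{n-t}=w^t/v$ sends $(w^{n-t},v)$ to $(u,uv/w^{n-t})=(u,w^t)$. No choice of row can fix this, because both rows embed the same module $\operatorname{im}(B_t)$. Since $M_0,\dots,M_{n-1}$ are pairwise non-isomorphic, $(u,w^t)\not\cong(u,w^{n-t})$ unless $n=2t$. ``Transposing'' is not a harmless reordering either, since $A_t^{T}=A_{n-t}$.

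\textbf{What is missing is an explicit convention.} The two relations in the paper's proof, $vu-w^tw^{n-t}=0$ and $-w^{n-t}u+uw^{n-t}=0$, use the coefficient vectors $(v,-w^t)$ and $(-w^{n-t},u)$. These are the \emph{rows} of $A_t$, so the paper tacitly lets $A_t$ act on row vectors. In column notation it is presenting $(u,w^{n-t})$ as $\operatorname{coker}(A_t^{T})=\operatorname{coker}(A_{n-t})\cong\operatorname{coker}(A_t)^\vee\cong M_{n-t}^\vee\cong M_t$.

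If you adopt that convention, your argument goes through verbatim. Then $\operatorname{coker}(A_t^{T})\cong\operatorname{im}(B_t^{T})$, whose coordinate projections are given by the columns of $B_t$. They yield $(u,w^{n-t})$ and $(w^t,v)$, both $\cong M_t$, and the periodic resolution is the transposed one.

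If you keep column vectors, the correct conclusion is $\operatorname{coker}(A_t)\cong M_{n-t}$, equivalently $\operatorname{coker}(B_t)\cong\operatorname{im}(A_t)\cong(v,w^t)\cong M_t$. The resolution ending in $A_t$ then resolves $M_{n-t}$. Since $t\mapsto n-t$ permutes $\{1,\dots,n-1\}$, the family still realizes every $M_t$.

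Either way, the index has to be settled by stating the convention, not by an isomorphism that does not hold.
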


\begin{proof} The columns of $A_t$ give the relations
$$
vu-w^tw^{n-t}=uv-w^n=0,\qquad -w^{n-t}u+uw^{n-t}=0$$
among the generators $u$ and $w^{n-t}$ of the ideal $(u,w^{n-t})$. Hence $A_t$ is a presentation matrix of
$(u,w^{n-t})$, and therefore $$
\operatorname{coker}(A_t)\cong(u,w^{n-t})\cong M_t$$
The $2$-periodic free resolution is the one associated with the matrix factorization $(A_t,B_t)$ by Eisenbud's correspondence (Corollary \ref{eisenbud}).
\end{proof}

\noindent Thus, in the Gorenstein cyclic case, the McKay and Wunram correspondences, together with Eisenbud's theory of matrix
factorizations, relate the four descriptions

$$
\begin{array}{ccc}
\{\text{non-trivial irreducible representations of }\mathcal C_{n,n-1}\}&\leftrightarrow&\{M_1,\ldots,M_{n-1}\}\\
& & \updownarrow \\
\qquad \qquad \qquad \qquad \{E_1,\ldots,E_{n-1}\} & \leftrightarrow &\{(A_1,B_1),\ldots,(A_{n-1},B_{n-1})\} 
\end{array}
$$

\subsection{The non-Gorenstein case}

\noindent We now turn to the case $q\neq n-1$. Consider the subgroup \(\mathcal C_{n,q}\) defined by (\ref{def-cnq}). In this case, $\mathcal C_{n,q}\not\subset \textnormal{SL}(2,\mathbb C)$, and the cyclic quotient singularity
$\mathbb A_{n,q}$ 
is non-Gorenstein. Unlike the Gorenstein case considered above, the invariant ring $R_{n,q}$ is in general not a hypersurface. 

\noindent We use Riemenschneider's description of the invariant ring in terms of generators and relations to obtain a suitable hypersurface in $\mathbb{C}^3$.

\begin{thm} \cite[\S 2.A.]{riemen} \label{riemen}
With the notation above, let us assume that
$$
\frac{n}{n-q} =a_2-\frac{1}{a_3-\frac{1}{{\vdots \atop a_{e-1}-\frac{1}{a_{e-1}}}}}=[a_2,\ldots,a_{e-1}]$$
where $a_t\geq2$ and $e$ is the embedding dimension of $R_{n,q}$. Then $R_{n,q}$ is generated by the $e$ monomials
${u_1}^{i_t}{u_2}^{j_t}$ with $0\leq t\leq e-1$ where
\begin{align*}
(i_0,j_0)&=(n,0),\\
(i_1,j_1)&=(n-q,1),\\
(i_t,j_t)&=(a_ti_{t-1}-i_{t-2}, a_tj_{t-1}-j_{t-2}), \qquad 2\leq t\leq e-1\\
(i_{e-1}, j_{e-1})&=(0,n).
\end{align*}

\end{thm}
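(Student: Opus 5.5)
We first identify $R_{n,q}$ with a semigroup ring. We use $u_1,u_2$ for the coordinates $x,y$; the generator of $\mathcal C_{n,q}$ acts on $u_1^iu_2^j$ by $\zeta^{i+qj}$. Hence $R_{n,q}$ is spanned by the monomials $u_1^iu_2^j$ with $(i,j)$ in the semigroup
$$\sigma_{n,q}=\{(i,j)\in\mathbb Z_{\geq 0}^2 \mid i+qj\equiv 0\pmod n\}.$$
This is the set of lattice points in the first quadrant cone for the lattice $N=\mathbb Z^2+\mathbb Z\frac1n(1,q)$, viewed dually. The claim is therefore that the minimal generating set (Hilbert basis) of $\sigma_{n,q}$ is exactly $\{(i_t,j_t)\}_{0\le t\le e-1}$. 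The plan is the standard one: show the listed points lie in $\sigma_{n,q}$ and are the vertices of the boundary of the convex hull of $\sigma_{n,q}\setminus\{0\}$. Then show that consecutive points form a lattice basis, so every element is a nonnegative combination of two consecutive ones.

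First we check membership by induction. $(n,0)$ and $(n-q,1)$ satisfy $i+qj\equiv 0$, and the recursion is an integral linear combination, so the congruence is preserved. We also show $j_t$ is strictly increasing and $i_t$ strictly decreasing (using $a_t\ge 2$), with $i_t\ge 0$. Next we prove the determinant identity $i_{t-1}j_t-i_tj_{t-1}=n$ for all $t$, again by induction. The base case is $n\cdot 1-(n-q)\cdot 0=n$, and the recursion preserves the determinant. Since $\sigma_{n,q}$ is a sublattice of index $n$ in $\mathbb Z^2$ intersected with the quadrant, each consecutive pair $(i_{t-1},j_{t-1}),(i_t,j_t)$ is a $\mathbb Z$-basis of the lattice $L=\{(i,j): i+qj\equiv 0 \bmod n\}$.

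Then the cones spanned by consecutive pairs subdivide the first quadrant, since the vectors rotate monotonically from $(n,0)$ to $(0,n)$. Any $(i,j)\in\sigma_{n,q}$ lies in one such cone. Writing it in the lattice basis of that cone gives nonnegative integer coefficients, because a basis of $L$ that spans the cone has integral coordinates and the point lies in the cone. This gives generation.

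The main obstacle is showing that the recursion actually terminates at $(0,n)$ exactly at index $e-1$, with the $a_t$ being the continued fraction of $n/(n-q)$. We plan to handle this by tracking the ratios $r_t=i_{t-1}/i_t$. From $i_t=a_ti_{t-1}-i_{t-2}$ we get $i_{t-2}/i_{t-1}=a_t-i_t/i_{t-1}$, so $r_1=n/(n-q)=a_2-1/r_2$, and so on. This is the Hirzebruch–Jung expansion read off step by step. It ends when $i_{e-2}$ divides in such a way that $i_{e-1}=0$, and $i_{e-2}=1$ (by the gcd/determinant relation). Then $j_{e-1}=n$ follows from the determinant identity $i_{e-2}j_{e-1}=n$.

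For minimality, each generator is a vertex of the convex hull. This follows from the convexity condition $a_t\ge 2$: the middle vector $(i_{t-1},j_{t-1})$ equals $\frac{1}{a_t}$ times the sum of its neighbours, so it lies strictly below the segment joining them. Consequently no generator is a sum of others, and $e$ is the embedding dimension.
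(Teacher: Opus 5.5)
The paper gives no proof of this statement; it quotes it from Riemenschneider. Your argument therefore has to stand on its own.

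The generation half is sound and is the standard toric argument: membership in $\sigma_{n,q}$, the identity $i_{t-1}j_t-i_tj_{t-1}=n$, consecutive pairs $v_{t-1},v_t$ (with $v_t=(i_t,j_t)$) being $\mathbb Z$-bases of the index-$n$ lattice $L$, and the cones $\operatorname{cone}(v_{t-1},v_t)$ subdividing the quadrant. One small correction: $i_t$ decreasing does not follow from $a_t\geq 2$ alone. It follows from the $a_t$ being the continued-fraction coefficients, that is, from your ratio identity $i_{t-1}/i_t=[a_{t+1},\ldots,a_{e-1}]>1$, so that step should come after the ratio tracking.

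The genuine gap is the minimality step, which is what identifies $e$ with the embedding dimension. From $v_{t-2}+v_t=a_tv_{t-1}$ you conclude that $v_{t-1}$ lies strictly below the segment joining its neighbours and is a vertex of the convex hull. This is false whenever $a_t=2$, because then $v_{t-1}$ is exactly the midpoint of $v_{t-2}$ and $v_t$. For example, for $n=3$, $q=1$ you have $[a_2,a_3]=[2,2]$, and all four generators $(3,0),(2,1),(1,2),(0,3)$ lie on the line $i+j=3$. Only the endpoints are vertices, yet $(2,1)$ and $(1,2)$ are indispensable. Even for genuine vertices, the step from being a vertex to not being a sum of other elements is not argued.

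A uniform fix is as follows.
\begin{itemize}
\item For $1\leq t\leq e-1$, let $\ell_t$ be the linear form with $\ell_t(v_{t-1})=\ell_t(v_t)=1$. Since $\{v_{t-1},v_t\}$ is a basis of $L$, $\ell_t$ is integer-valued on $L$.
\item The numbers $c_s=\ell_t(v_s)$ satisfy $c_{s-2}+c_s=a_sc_{s-1}$ with $a_s\geq 2$. Starting from $c_{t-1}=c_t=1$, induction on the differences $c_s-c_{s-1}$, forwards and backwards, gives $c_s\geq 1$ for all $s$.
\item In particular $\ell_t(n,0)\geq 1$ and $\ell_t(0,n)\geq 1$, so $\ell_t>0$ on $\mathbb{R}_{\geq 0}^2\setminus\{0\}$. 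Being integer-valued on $L$, it follows that $\ell_t\geq 1$ on $\sigma_{n,q}\setminus\{0\}$.
\item A decomposition $v_t=w+w'$ with $w,w'\in\sigma_{n,q}\setminus\{0\}$ would then give $1=\ell_t(v_t)\geq 2$, a contradiction.
\end{itemize}
Letting $t$ run from $1$ to $e-1$ shows that every $v_s$ is irreducible. Hence the $e$ monomials form the Hilbert basis, and $e$ is the embedding dimension.
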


\noindent Let $z_1=x^n$, $z_2=x^{n-q}y$, $z_3=x^{i_2}y^{j_2}$, $\ldots $, $z_e=y^n$ denote the monomial generators of $R_{n,q}$ given by Theorem \ref{riemen}. Among the defining relations of $R_{n,q}$ are the equations
$$z_{t-1}z_{t+1}=z_t^{a_t},\qquad 2\le t\le e-1.$$
We project the affine embedding of $\mathbb A_{n,q}=\operatorname{Spec}(R_{n,q})$ onto the three coordinates $z_1$, $z_2$, and $z_e$. By setting $x=z_1$, $z=z_2$ and $y=z_e$, we identify the image of this projection with the hypersurface 
\begin{equation}\label{eq-anq}
\overline R_{n,q}={\mathbb C[x,y,z]}/{(z^n-x^{n-q}y)} .
\end{equation} 

The minimal resolution of $\operatorname{Spec}\left(\overline R_{n,q}\right)$ has exceptional divisor shown in Figure \ref{graf-Anq}.

\begin{figure}[h]
\begin{center}
\begin{tikzpicture}[domain=-1.25:1.25,scale=1.2]
\draw (-2,0) circle [radius=0.1] node[below]{$-b_1$};
\draw[-] (-1.9,0) -- (-1.1,0) ;
\draw (-1,0) circle [radius=0.1]   node[below]{$-b_2$};
\draw[-] (-0.9,0) -- (-0.1,0) ;
\draw (0,0) circle [radius=0.1]  node[below]{$-b_3$};
\draw[dotted,thick] (0.2,0) -- (0.8,0) ;
\draw (1,0) circle [radius=0.1]  node[below]{$-b_{r-1}$};
\draw[-] (1.1,0) -- (1.9,0) ;
\draw (2,0) circle [radius=0.1]  node[below]{$-b_r$};
 \end{tikzpicture}
\caption{Dual resolution graph of $\mathbb{A}_{n,q}$ singularity}
\label{graf-Anq}
\end{center}
\end{figure}

Here the integers $b_1,\ldots,b_r$ are determined by the Hirzebruch-Jung expansion of $\displaystyle \frac{n}{q}$ given by (\ref{eq-nq}).

\begin{rem}\label{rem-anq-oka} We note that one can recover the dual resolution graph in Figure \ref{graf-Anq} using Oka's method (\cite{oka1,oka2}) whose main ingredients are three dimensional cones, Newton polygons and their \textit{appropriate} subdivisions in three dimensional space. For our equation, we first need to apply the linear transformation \((x,y,z)\mapsto (x,x+y,z)\) on \(\overline R_{n,q}\) and obtain \(z^n-x^{n-q}y-x^{n-q+1}=0\) so that there are sufficient number of monomials to produce a cone of the right dimension. Additionally, blowing down $(-1)$-curves in the output of Oka's method is required to match the graph in question.
\end{rem}


\subsubsection{Matrix factorizations for the projected hypersurface}  The advantage of working with $\overline R_{n,q}$ is that it is a hypersurface, so its MCM modules can be studied by means of matrix factorizations. Our aim is not to describe all MCM modules over $\overline R_{n,q}$, but rather to construct a finite family whose
members recover the special CM modules of the original quotient singularity $R_{n,q}$ after extension of scalars and taking reflexive
hulls. We first recall which modules over $R_{n,q}$ are special.

As in the Gorenstein case, the indecomposable MCM modules over $R_{n,q}$ are described by the semi-invariant modules. For
$0\leq t\leq n-1$, let
$$
M_t=\{
h\in\mathbb C[x,y]\mid
g\cdot h=\varepsilon^t h \}$$
where $\varepsilon$ is a fixed primitive $n$-th root of unity.
Equivalently,
$$
M_t=\left\{ \left. \sum a_{ij}x^iy^j\right|   i+qj\equiv t\pmod n \right \}.$$
The modules $M_0,\ldots, M_{n-1}$ form a complete set of pairwise non-isomorphic indecomposable MCM $R_{n,q}$-modules. In contrast to the Gorenstein case, only some of these modules are special. They are described explicitly as follows.

\begin{thm} \cite{WemyssA} The indecomposable non-free special CM $R_{n,q}$-modules are
$M_{i_1}, M_{i_2}, \ldots, M_{i_{e-2}}$ where the integers $i_t$ and $j_t$ are those introduced above. Moreover,
$$
M_{i_t}= R_{n,q}x^{i_t}+R_{n,q}y^{j_t}, \qquad 1\leq t\leq e-2.$$
\end{thm}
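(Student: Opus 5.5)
The plan is to reduce the statement to a count of minimal generators, using Wunram's theorem (Theorem \ref{thm-wunram}) in both directions. Each $M_t$ has rank one, so Theorem \ref{thm-wunram} says a non-free special $M_t$ has $\mu_{R_{n,q}}(M_t)=2$. It also says there are exactly $r$ non-free indecomposable special modules, one for each exceptional curve. Both the set $\{t \mid \mu(M_t)=2\}$ and the set of special modules therefore have a fixed size. If I can show that exactly the listed modules satisfy $\mu(M_t)=2$, and that their number is $r$, the identification follows. This avoids computing $\operatorname{Ext}^1_{R_{n,q}}(M_t,R_{n,q})$ directly. I will keep the Iyama--Wemyss criterion $\operatorname{Ext}^1(M,R)=0$ as a fallback for the converse.

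\textbf{Step 1: combinatorial description of $M_t$.} $M_t$ is spanned by the monomials $x^ay^b$ with $a+qb\equiv t \pmod n$. A monomial in $M_t$ is a minimal generator exactly when it is not divisible by another monomial of $M_t$ times a non-unit invariant monomial. So the minimal generators are the monomials of $M_t$ that are minimal for the componentwise order on exponents. For each $0\le b\le n-1$ there is a unique smallest $a(b)\in[0,n-1]$ with $a(b)+qb\equiv t$. The minimal generators are the points $(a(b),b)$ at which $a(b)$ drops below all previous values, giving a staircase.

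\textbf{Step 2: monomial modules with two generators.} I will show that the staircase has exactly two steps, namely $x^{t}$ (at $b=0$) and $y^{j}$ for the first $j$ with $qj\equiv t$, precisely when no $b$ with $0<b<j$ has $a(b)<t$. The invariant generators $x^{i_s}y^{j_s}$ from Theorem \ref{riemen} are exactly the vertices of the boundary of the convex hull of the lattice points $(a,b)\ne 0$ with $a+qb\equiv 0$. For such a vertex, the rectangle $[0,i_s)\times[0,j_s)$ contains no point of that lattice other than the origin. This emptiness transfers to the coset $a+qb\equiv i_s$ shifted by $x^{i_s}$, which gives $M_{i_s}=R_{n,q}x^{i_s}+R_{n,q}y^{j_s}$. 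Conversely, if $t$ is not one of the $i_s$, the coset $a+qb\equiv t$ contains a point strictly inside the rectangle spanned by $(t,0)$ and $(0,j)$. That point is a third minimal generator, so $\mu(M_t)\ge3$ and $M_t$ is not special.

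\textbf{Step 3: matching the count.} Finally I must check that the number of indices so obtained equals $r$, the length of $n/q=[b_1,\dots,b_r]$. This is where I expect the main obstacle, namely the bookkeeping between the two continued fractions $n/q$ and $n/(n-q)$. I would first check the indexing and duality conventions carefully, using Riemenschneider's point diagram relating $[b_1,\dots,b_r]$ and $[a_2,\dots,a_{e-1}]$ and small examples such as $7/2$ versus $7/5$. It is possible that the special indices should be read off from the vertices attached to the $b$-expansion rather than the $a$-expansion, in which case the list must be adjusted. Once the count matches $r$, Wunram's bijection forces every module with $\mu(M_t)=2$ to be special. As an independent check, one can verify $\operatorname{Ext}^1(M_{i_s},R_{n,q})=0$ from the $2\times2$ presentation given by the relation $z_{s-1}z_{s+1}=z_s^{a_s}$.
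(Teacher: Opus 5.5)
\textbf{The gap is in Step 2}, at ``this emptiness transfers to the coset $a+qb\equiv i_s$''. Because $x^{i_s}y^{j_s}$ is invariant, $i_s+qj_s\equiv 0 \pmod n$. So $y^{j_s}$ has weight $qj_s\equiv -i_s$ and lies in $M_{n-i_s}$, not in $M_{i_s}$. Unless $2i_s\equiv 0$, the module $R_{n,q}x^{i_s}+R_{n,q}y^{j_s}$ is not even contained in a single $M_t$.

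For $n=5$, $q=2$ the invariant generators are $x^5,x^3y,xy^2,y^5$, so $(i_1,j_1)=(3,1)$ and $(i_2,j_2)=(1,2)$. Yet $M_3$ is minimally generated by $x^3,xy,y^4$. The two-generated modules are instead $M_2=R_{n,q}x^2+R_{n,q}y$ and $M_1=R_{n,q}x+R_{n,q}y^3$. Your converse fails for the same reason: $2$ is not an $i_s$, but $\mu(M_2)=2$. Step 3 cannot be rescued by bookkeeping either. For $n=7$, $q=2$ you get $e-2=3$ indices from $7/5=[2,2,3]$, but only $r=2$ curves from $7/2=[4,2]$.

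So the statement, read literally with the $i_t,j_t$ of Theorem \ref{riemen}, is false. The paper gives no proof of it and only cites Wemyss, whose $i$- and $j$-series come from $n/q$, not from $n/(n-q)$. You suspected exactly this in Step 3 but left it unresolved, while Step 2 asserts the false identity.

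\textbf{Your framework works once the lattice is corrected.} If $M_t=R_{n,q}x^t+R_{n,q}y^j$, then $t\equiv qj$. So the relevant lattice is $\Lambda'=\{(a,b)\mid a\equiv qb \pmod n\}$, not the invariant lattice. For $1\le t\le n-1$, let $j\in[1,n-1]$ be the unique solution of $qj\equiv t$. The map $(a,b)\mapsto(a,j-b)$ is a bijection between coset points with $0<a<t$, $0<b<j$ and points of $\Lambda'$ strictly inside $[0,t]\times[0,j]$. Boundary points other than the two corners are ruled out by the uniqueness of $j$. Hence $\mu(M_t)=2$ exactly when $x^ty^j$ is a minimal generator of $\mathbb C[\Lambda'\cap\mathbb N^2]$, which is the invariant ring of $\frac1n(1,n-q)$.

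Riemenschneider's theorem for that group has dual fraction $n/q=[b_1,\dots,b_r]$. It lists these generators as $(i_t,j_t)$ with $i_0=n$, $i_1=q$, $j_0=0$, $j_1=1$, $i_t=b_{t-1}i_{t-1}-i_{t-2}$ and $j_t=b_{t-1}j_{t-1}-j_{t-2}$. Exactly $r$ of them, those with $1\le t\le r$, differ from $x^n,y^n$. So the count in Step 3 becomes automatic. Your inclusion argument via Wunram's theorem then identifies the non-free specials as $M_{i_1},\dots,M_{i_r}$, with $M_{i_t}=R_{n,q}x^{i_t}+R_{n,q}y^{j_t}$ for these $n/q$-series. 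This is the correct form of the statement.
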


The following characterization will be useful when comparing the modules obtained from matrix factorizations with the special CM modules over $R_{n,q}$.

\begin{prop} \cite{IyamaWemyss,WemyssA}
Let $M$ be an indecomposable MCM $R_{n,q}$-module. Then the following conditions are equivalent:

\begin{enumerate}
\item[(i)] $M$ is special, 

\item[(ii)] $\operatorname{Ext}^1_{R_{n,q}}(M,R_{n,q})=0$, 

\item[(iii)] $M\cong M_{i_t}$ for some $1\leq t\leq e-2$.
\end{enumerate}
\end{prop}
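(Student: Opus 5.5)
The plan is to prove the cycle (i)$\Leftrightarrow$(ii) and (i)$\Leftrightarrow$(iii), using the two results quoted just before the statement.

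\textbf{(i)$\Leftrightarrow$(ii).} The invariant ring $R_{n,q}$ is a rational surface singularity: quotient surface singularities are rational. Hence the Iyama--Wemyss theorem applies directly to any MCM $R_{n,q}$-module $M$, so $M$ is special if and only if $\operatorname{Ext}^1_{R_{n,q}}(M,R_{n,q})=0$. This step needs no work beyond citing that theorem.

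\textbf{(i)$\Leftrightarrow$(iii).} Let $M$ be an indecomposable MCM module. Since $M_0,\ldots,M_{n-1}$ is a complete list of indecomposable MCM modules up to isomorphism, $M\cong M_s$ for a unique $s$. Wemyss's theorem says the non-free indecomposable special modules are exactly $M_{i_1},\ldots,M_{i_{e-2}}$. So if $M$ is special and non-free, then $M\cong M_{i_t}$ for some $t$. Conversely, each $M_{i_t}$ is special by the same theorem.

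\textbf{The free module.} The main obstacle is the free module $M_0=R_{n,q}$. It is indecomposable and special, since $\operatorname{Ext}^1(R,R)=0$. Yet it is not of the form $M_{i_t}$ with $1\le t\le e-2$: we have $i_t\neq 0$ for $t\le e-2$, because the $i_t$ strictly decrease from $n$ to $0$, reaching $0$ only at $t=e-1$. Thus (iii) must be read as tacitly excluding the free module, or equivalently as allowing $t=0$ and $t=e-1$. Indeed $M_{i_0}=M_n=M_0$ and $M_{i_{e-1}}=M_0$, with generators $x^n,y^0$, which is the free module. I would state this convention explicitly: either restrict to non-free $M$, or extend the range of $t$ to $0\le t\le e-1$. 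With that convention the equivalence follows from the two cited theorems.

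To double-check the identification of $M_{i_t}$ with the generators $x^{i_t},y^{j_t}$, verify that each monomial $x^ay^b$ with $a+qb\equiv i_t \pmod n$ is divisible by $x^{i_t}$ or by $y^{j_t}$, up to a monomial in $R_{n,q}$. Note first that $i_t+qj_t\equiv i_t \pmod n$, because $q j_t\equiv 0\pmod n$ follows from the recursion, starting from $(n-q)+q\equiv 0$ at $t=1$. This is routine and is taken from \cite{WemyssA}.
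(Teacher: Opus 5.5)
Your route is the paper's: it proves the proposition purely by citing \cite{IyamaWemyss} for (i)$\Leftrightarrow$(ii) and \cite{WemyssA} for (i)$\Leftrightarrow$(iii). Your remark on the free module is also correct. $M_0=R_{n,q}$ is indecomposable and special, but it is not any $M_{i_t}$ with $1\le t\le e-2$, so (i)$\Rightarrow$(iii) needs $M$ non-free.

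The genuine gap is your final consistency check. Done correctly, it shows that (iii), with the $i_t$ of Theorem \ref{riemen}, is false, so no citation can establish it. The recursion, started from $(n-q)+q\cdot 1=n$, gives $i_t+qj_t\equiv 0\pmod n$, not $qj_t\equiv 0$. (This congruence is just the invariance of the generator $x^{i_t}y^{j_t}$.) Already $qj_1=q\not\equiv 0$. Hence $x^{i_t}$ has character $i_t$ while $y^{j_t}$ has character $-i_t$. So $R_{n,q}x^{i_t}+R_{n,q}y^{j_t}$ lies in no single $M_s$ unless $2i_t\equiv 0\pmod n$.

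Concretely, take $n=3$, $q=1$. Then $3/2=[2,2]$, $e=4$, $(i_1,j_1)=(2,1)$ and $(i_2,j_2)=(1,2)$, so (iii) would make both $M_2$ and $M_1$ special. But the resolution has a single $(-3)$-curve. Moreover $M_2$ (forms of degree $\equiv 2\pmod 3$) is minimally generated by $x^2,xy,y^2$, contradicting $\mu=2\operatorname{rank}$ in Theorem \ref{thm-wunram}.

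In general $e-2=1+\sum_i(b_i-2)$ differs from the number $r$ of curves. Even when the counts agree, the sets differ. For $n=5$, $q=2$ the series of Theorem \ref{riemen} gives $\{3,1\}$, yet $M_3$ needs the three generators $x^3,xy,y^4$.

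What \cite{Wunram} and \cite{WemyssA} actually prove uses the $i$-series of $n/q=[b_1,\ldots,b_r]$. Riemenschneider's series for $n/(n-q)$ only describes the generators of $R_{n,q}$. Set $i_0=n$, $i_1=q$, $i_{t+1}=b_ti_t-i_{t-1}$ and $j_0=0$, $j_1=1$, $j_{t+1}=b_tj_t-j_{t-1}$. The same induction, now starting from $i_1=q\cdot j_1$, gives $i_t\equiv qj_t\pmod n$. So $M_{i_t}=R_{n,q}x^{i_t}+R_{n,q}y^{j_t}$ makes sense, and the non-free indecomposable special modules are $M_{i_1},\ldots,M_{i_r}$; for $n=5$, $q=2$ these are $M_2$ and $M_1$. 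With (iii) restated in this form and $M$ assumed non-free, your two citations do prove the proposition.
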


\noindent The equivalence of {\rm (i)} and {\rm (ii)} follows from
\cite{IyamaWemyss}, while the equivalence of {\rm (i)} and {\rm (iii)}
follows from \cite{WemyssA}.

We consider the hypersurface ring $\overline{R}_{n,q}$ given by equation (\ref{eq-anq}), and construct the matrix
factorizations that will be used to recover the special CM modules described above. 
For each $1\leq k\leq n-1$, define
$$
A_k=\begin{pmatrix}
y & -z^k\\
-z^{n-k} & x^{n-q}
\end{pmatrix},
\qquad
B_k=
\begin{pmatrix}
x^{n-q} & z^k\\
z^{n-k} & y
\end{pmatrix}
$$

\begin{prop}\label{cyclic-mf}
For every $1\leq k\leq n-1$, the pair $(A_k,B_k)$ is a $2\times 2$ matrix factorization of the defining equation
$f=x^{n-q}y-z^n$. 
\end{prop}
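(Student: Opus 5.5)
The plan is a direct verification of the two identities $A_kB_k=f\,\mathrm{Id}_2$ and $B_kA_k=f\,\mathrm{Id}_2$ in $\mathbb C[x,y,z]$, exactly as in the Gorenstein case treated in Section \ref{sect-gor}. The key observation is that both matrices have the shape $\begin{pmatrix} a & \mp c\\ \mp d & b\end{pmatrix}$ with $a=y$, $b=x^{n-q}$, $c=z^k$, $d=z^{n-k}$, all entries in the commutative ring $\mathbb C[x,y,z]$. With the signs arranged as in the definition, $B_k$ is the adjugate of $A_k$ after interchanging the two diagonal entries.

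First I compute $A_kB_k$ entry by entry.
\begin{itemize}
\item The $(1,1)$-entry is $y\cdot x^{n-q}-z^k z^{n-k}=x^{n-q}y-z^n=f$.
\item The $(2,2)$-entry is $-z^{n-k}z^k+x^{n-q}y=f$.
\item The off-diagonal entries are $yz^k-z^ky$ and $-z^{n-k}x^{n-q}+x^{n-q}z^{n-k}$. Both vanish by commutativity.
\end{itemize}
Hence $A_kB_k=f\,\mathrm{Id}_2$. Next I compute $B_kA_k$ the same way.
\begin{itemize}
\item The diagonal entries are $x^{n-q}y-z^kz^{n-k}=f$ and $-z^{n-k}z^k+yx^{n-q}=f$.
\item The off-diagonal entries are $-x^{n-q}z^k+z^kx^{n-q}=0$ and $z^{n-k}y-yz^{n-k}=0$.
\end{itemize}
This gives $B_kA_k=f\,\mathrm{Id}_2$.

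The only point needing care is the exponent bookkeeping. The exponents of $z$ must add to $n$ in each diagonal term, and they do, since $k+(n-k)=n$. The hypothesis $1\le k\le n-1$ guarantees that both $z^k$ and $z^{n-k}$ are non-units. So neither matrix has a unit entry, and the factorization is reduced in the sense of Section \ref{sect-prem}; this is not needed for the statement itself but is worth recording.

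As a consistency check, $\det A_k=x^{n-q}y-z^n=f$ and $\det B_k=f$. This matches the general rank formula with $n=2$ and $k=1$, so $\operatorname{coker}(A_k)$ will be a rank-one MCM $\overline R_{n,q}$-module by Corollary \ref{eisenbud}. There is no real obstacle in this proposition; the nontrivial work comes afterwards, in identifying these cokernels with the special modules $M_{i_t}$.
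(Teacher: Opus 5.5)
Your proof is correct and matches the paper, which simply states that a direct computation gives $A_kB_k=B_kA_k=(x^{n-q}y-z^n)\,\mathrm{Id}_2$; your entry-by-entry check is exactly that computation. One small correction to your framing remark: $B_k$ is exactly $\operatorname{adj}(A_k)$, with no further interchange of diagonal entries needed, so the identity $A_k\operatorname{adj}(A_k)=\operatorname{adj}(A_k)A_k=\det(A_k)\,\mathrm{Id}_2=f\,\mathrm{Id}_2$ already gives the result.
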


\begin{proof}
By a direct computation, we have $A_kB_k=B_kA_k=(x^{n-q}y-z^n)\textnormal{Id}_2$. 
\end{proof}

\subsubsection{Rank-one modules} Let $N_k:=\operatorname{coker}(A_k)$. By Eisenbud's correspondence (Corollary \ref{eisenbud}), $N_k$ is a MCM $\overline R_{n,q}$-module for every $1\leq k\leq n-1$. The following result describes these modules explicitly.

\begin{prop}\label{cyclic-rankone}
For every $1\leq k\leq n-1$, the module $N_k$ is a non-free indecomposable rank-one maximal 
CM $\overline R_{n,q}$-module. Moreover, $N_k$ is minimally generated by two
elements and admits the $2$-periodic free resolution
$$
\cdots \longrightarrow \overline R_{n,q}^{2}\xrightarrow{B_k} \overline R_{n,q}^{2}
\xrightarrow{A_k}\overline R_{n,q}^{2} \xrightarrow{B_k} \overline R_{n,q}^{2}\xrightarrow{A_k}
\overline R_{n,q}^{2}\longrightarrow N_k\longrightarrow 0.
$$
\end{prop}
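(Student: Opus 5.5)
Since $(A_k,B_k)$ is a matrix factorization of $f=x^{n-q}y-z^n$ by Proposition \ref{cyclic-mf}, Corollary \ref{eisenbud} shows that $N_k=\operatorname{coker}(A_k)$ is an MCM $\overline R_{n,q}$-module. Reducing the identities $A_kB_k=B_kA_k=f\,\mathrm{Id}_2$ modulo $f$ gives the displayed $2$-periodic complex. Exactness follows from the standard argument. Over the domain $\overline R_{n,q}$ (note $f$ is irreducible, being linear in $y$ with coprime coefficients $x^{n-q}$ and $z^n$), we have $\ker \bar A_k=\operatorname{im}\bar B_k$. Indeed, if $\bar A_k v=0$, lift $v$ to $S^2$; then $A_kv=f w$ for some $w$, so $A_k(v-B_kw)=0$ in $S^2$. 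Since $\det A_k=f\neq 0$, this forces $v=B_kw$. The same argument with the roles of $A_k$ and $B_k$ swapped gives the other half.

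For the rank, I will use the determinant criterion recalled after Corollary \ref{eisenbud}: $\det A_k=x^{n-q}y-z^n=f$, so $k=1$ and $\operatorname{rank}N_k=1$. Similarly $\det B_k=f$, so the factorization is of the form $uf^1$, $u^{-1}f^{1}$ with $u=1$.

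Minimal generation by two elements and non-freeness both come from reducedness. Every entry of $A_k$ lies in the maximal ideal $\mathfrak m=(x,y,z)$, because $1\le k\le n-1$, $n-k\ge1$ and $n-q\ge1$. Hence $A_k\otimes \mathbb C=0$, so $\mu(N_k)=2$. If $N_k$ were free, it would be free of rank one, hence cyclic, which contradicts $\mu(N_k)=2$.

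Indecomposability is the step needing the most care, but here it follows from rank. A decomposition $N_k=P\oplus Q$ with both summands nonzero would force one summand to have rank zero. Such a summand is torsion, but an MCM module over a domain is torsion-free (its associated primes are minimal, and $(0)$ is the only minimal prime). This is a contradiction.

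Alternatively, one can identify $N_k$ concretely with the ideal $(x^{n-q},z^{n-k})\subset\overline R_{n,q}$, reading the columns of $A_k$ as its relations exactly as in the Gorenstein case. This gives a rank-one torsion-free module requiring two generators. I would include this identification since it is used later to compare with $M_{i_t}$.
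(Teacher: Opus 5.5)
Your proof of the proposition is correct and follows the paper's argument: Eisenbud's correspondence gives the MCM property and the periodic resolution, entries of $A_k$ in $\mathfrak m=(x,y,z)$ give minimality and non-freeness, $\det A_k=f$ gives rank one, and rank additivity plus torsion-freeness gives indecomposability. Your explicit exactness check and your verification that $f$ is irreducible fill in points the paper leaves implicit.

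The closing ``alternative'' identification is wrong as stated, though it is not used in the proof of the proposition. You say you read the columns of $A_k$ as the relations. Pairing the first column $(y,-z^{n-k})^{T}$ with the generators $(x^{n-q},z^{n-k})$ gives $x^{n-q}y-z^{2(n-k)}=z^n-z^{2n-2k}$, which is nonzero in $\overline R_{n,q}$ unless $2k=n$. The pairing that does vanish uses the \emph{rows} of $A_k$. So $(x^{n-q},z^{n-k})$ presents $\operatorname{coker}(A_k^{T})=\operatorname{coker}(A_{n-k})=N_{n-k}$, not $N_k$. The Gorenstein computation you are modelling this on in fact pairs the rows of $A_t$ with $(u,w^{n-t})$.

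The correct identification, with columns as relations, is the one in Theorem~\ref{cyclic-comparison}(i): $N_k\cong(z^{n-k},y)$. Multiplying by $z^k/y$ and using $z^n=x^{n-q}y$, this is equivalently $N_k\cong(x^{n-q},z^k)$.

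This matters if you carry it into the comparison with the $M_{i_t}$. Under $\iota$ and after division by a monomial, $(x^{n-q},z^{n-k})$ becomes generated by two monomials of character $-qk \pmod n$. Its reflexive hull is therefore $M_{n-\lambda_k}$ rather than $M_{\lambda_k}$, and these are non-isomorphic unless $2k=n$. Your later indexing $k_t$ would then be off by $k\mapsto n-k$.
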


\begin{proof} Since $(A_k,B_k)$ is a matrix factorization of $f$, Eisenbud's
correspondence implies that $N_k=\operatorname{coker}(A_k)$ is a MCM $\overline R_{n,q}$-module and that the associated $2$-periodic complex is exact. The matrix factorization is reduced because
all entries of $A_k$ and $B_k$ belong to the maximal ideal $(x,y,z)$. Hence the presentation of $N_k$ is minimal, so $N_k$ is minimally generated by two elements. To compute the rank, observe that
$\textnormal{det}(A_k)=x^{n-q}y-z^n=f$. Since $f=0$ in $\overline R_{n,q}$ and $A_k$ is nonzero over the fraction
field of $\overline R_{n,q}$, the matrix $A_k$ has rank one. Hence $\operatorname{rank}_{\overline R_{n,q}}(N_k)=1$. Since $N_k$ has rank one but is minimally generated by two elements, it is not free.

\noindent It remains to prove that $N_k$ is indecomposable. Let us assume that 
$N_k\simeq N'\oplus N''$. Since $\overline R_{n,q}$ is a two-dimensional domain and $N_k$ is MCM, $N_k$ is torsion-free. Hence its direct summands $N'$ and $N''$ are torsion-free. Every nonzero
torsion-free module over a domain has positive rank. Therefore,
$$
1=\operatorname{rank}(N_k)=\operatorname{rank}(N')+\operatorname{rank}(N'')$$
It follows that one of the two summands has rank zero, and hence must be
zero. Thus $N_k$ is indecomposable.\end{proof}

\noindent The projected hypersurface is naturally related to the invariant ring through the injective homomorphism
\begin{eqnarray}\label{eq-iota} \iota \colon \overline R_{n,q} &\rightarrow &R_{n,q} 
\\
(x,y,z) &\mapsto & (x^n, y^n, x^{n-q}y )\nonumber \end{eqnarray}
Thus every $\overline R_{n,q}$-module determines an $R_{n,q}$-module by the extension of scalars. As $R_{n,q}$ is normal, we may pass to the reflexive hull.

\subsubsection{Comparison with the special modules} Recall that the indecomposable special MCM modules
over $R_{n,q}$ are $M_{i_1}, M_{i_2},\ldots, M_{i_{e-2}}$ where the integers $i_t$ are defined by Theorem \ref{riemen}. The following theorem compares the modules coming from the matrix factorizations $(A_k,B_k)$ with the special CM modules over $R_{n,q}$.

\begin{thm}\label{cyclic-comparison}
For every $1\leq k\leq n-1$, the following hold.

\begin{enumerate}

\item[(i)] There is an isomorphism of $\overline R_{n,q}$-modules $N_k\cong (z^{n-k},y)$. 

\item[(ii)] The reflexive hull of the $R_{n,q}$-module $N_k\otimes_{\overline R_{n,q}}R_{n,q}$ satisfies
$$(N_k\otimes_{\overline R_{n,q}}R_{n,q})^{\vee\vee}\cong M_{\lambda_k}$$
where $\lambda_k\equiv qk\pmod n$ and $1\leq \lambda_k \leq n-1$. 

\item[(iii)] The module $(N_k\otimes_{\overline R_{n,q}}R_{n,q})^{\vee\vee}$ is special if and only if
$\lambda_k \in{i_1,\ldots, i_{e-2}}$. In this case, we have 
$$(N_k\otimes_{\overline R_{n,q}}R_{n,q})^{\vee\vee}\cong M_{i_t}$$
where $1\leq t\leq e-2$ is the unique index such that $\lambda_k=i_t$.

\item[(iv)] Every indecomposable special MCM $R_{n,q}$-module is obtained in this way. More precisely, for every
$1\leq t\leq e-2$ there exists a unique $1\leq k_t\leq n-1$ such that
$$
qk_t\equiv i_t\pmod n$$
and 
$$(N_{k_t}\otimes_{\overline R_{n,q}}R_{n,q})^{\vee\vee}\cong M_{i_t}$$

\end{enumerate}
\end{thm}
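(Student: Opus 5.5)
The plan is to prove (i) by exhibiting explicit generators and comparing ranks, and (ii) by computing the image of $N_k\otimes R_{n,q}$ inside the fraction field and recognizing it as a submodule of $M_{\lambda_k}$ with cokernel of finite length. Parts (iii) and (iv) should then follow formally from the classification of special modules and the invertibility of $q$ modulo $n$.

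\emph{Step (i).} I would define the surjection $\overline R_{n,q}^2\to (z^{n-k},y)$ by $e_1\mapsto z^{n-k}$ and $e_2\mapsto y$. The two columns of $A_k$ map to
$$yz^{n-k}-z^{n-k}y=0 \quad\text{and}\quad -z^kz^{n-k}+x^{n-q}y=f=0,$$
so the map factors through $N_k=\operatorname{coker}(A_k)$. By Proposition \ref{cyclic-rankone}, $N_k$ is MCM, hence torsion-free, and it has rank one. The ideal $(z^{n-k},y)$ is nonzero in a domain, so it also has rank one. The kernel of the surjection $N_k\to(z^{n-k},y)$ therefore has rank zero. Being a submodule of a torsion-free module, it is itself torsion-free, and hence it vanishes.

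\emph{Step (ii).} Applying $-\otimes_{\overline R_{n,q}}R_{n,q}$ to the surjection $\overline R_{n,q}^2\to N_k$ and composing with $\iota$ gives a surjection
$$N_k\otimes R_{n,q}\to I:=\bigl(\iota(z)^{n-k},\iota(y)\bigr)R_{n,q}=\bigl(x^{(n-q)(n-k)}y^{n-k},\,y^n\bigr)R_{n,q}\subset\mathbb C[x,y].$$
Its kernel has rank zero, so it is torsion. Since $(-)^{\vee\vee}$ kills torsion, we get $(N_k\otimes R_{n,q})^{\vee\vee}\cong I^{\vee\vee}$.

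Multiplying by $y^{-(n-k)}$ in the fraction field gives $I\cong J:=(x^{(n-q)(n-k)},y^k)R_{n,q}$. Both generators of $J$ are semi-invariants, of weights $(n-q)(n-k)\equiv qk$ and $qk$ modulo $n$. Hence $J\subseteq M_{\lambda_k}$, and $\lambda_k\neq 0$ because $\gcd(q,n)=1$ and $0<k<n$.

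The key point, which I expect to be the main obstacle to write carefully, is that $M_{\lambda_k}/J$ has finite length. On the punctured quotient $(\mathbb C^2\setminus 0)/\mathcal C_{n,q}$ the group acts freely, and $M_{\lambda_k}$ is locally generated by any semi-invariant of weight $\lambda_k$ that does not vanish at the point. Since $x^{(n-q)(n-k)}$ and $y^k$ have no common zero off the origin, $J$ and $M_{\lambda_k}$ agree away from the closed point. Two rank-one modules over the normal two-dimensional domain $R_{n,q}$ that agree in codimension one have the same reflexive hull. Since $M_{\lambda_k}$ is MCM, hence reflexive, this gives $J^{\vee\vee}\cong M_{\lambda_k}$.

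\emph{Steps (iii) and (iv).} Part (iii) follows from (ii), from the pairwise non-isomorphism of $M_0,\ldots,M_{n-1}$, and from the classification of the special modules as $M_{i_1},\ldots,M_{i_{e-2}}$ in \cite{WemyssA}. For (iv), the recursion in Theorem \ref{riemen} shows that $n>i_1>\cdots>i_{e-2}>0$, so each $i_t$ lies in $\{1,\ldots,n-1\}$. Because $q$ is a unit modulo $n$, there is a unique $k_t\in\{1,\ldots,n-1\}$ with $qk_t\equiv i_t\pmod n$, namely $k_t\equiv q^{-1}i_t\pmod n$. Applying (ii) with $k=k_t$ then gives $(N_{k_t}\otimes_{\overline R_{n,q}}R_{n,q})^{\vee\vee}\cong M_{i_t}$.
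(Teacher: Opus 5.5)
Your proposal is correct. For (ii)--(iv) it follows the same skeleton as the paper's proof; the differences are in how (i) and the crucial step of (ii) are justified.

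For (i), the paper does a direct syzygy computation. It lifts a relation $az^{n-k}+by=0$ to $\mathbb C[x,y,z]$, writes it as a multiple of $f$, and uses that $y$ and $z^{n-k}$ are coprime to express $(a,b)$ through the columns of $A_k$. This shows $A_k$ presents the ideal without any input about $N_k$. You instead use that $N_k$ is torsion-free of rank one (Proposition \ref{cyclic-rankone}), so the surjection $N_k\to(z^{n-k},y)$ must be injective. Your route is shorter, and it is exactly the argument the paper itself uses later for the dihedral factorizations (Proposition \ref{alternativeideals-D}). The cost is that it depends on the MCM and rank computation rather than being self-contained.

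For (ii), the paper passes directly from $N_k\otimes_{\overline R_{n,q}}R_{n,q}$ to the ideal generated by the images; the torsion-kernel step is only spelled out later, in Proposition \ref{normalization-MF-D}. It then concludes from the common character $qk$ by appealing to ``the classification of rank-one reflexive modules by characters''. Your version fills in what that appeal leaves implicit. A common character only gives $J\subseteq M_{\lambda_k}$; one also needs $M_{\lambda_k}/J$ to have finite length. You get this from the freeness of the action off the origin and the fact that $x^{(n-q)(n-k)}$ and $y^k$ have no common zero there. This is genuinely needed: if the two generators shared a factor such as $x$, the reflexive hull would be some other $M_\mu$. You also check $0<i_t<n$ in (iv), which the paper leaves tacit.

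One caveat applies equally to both arguments. Part (iii) is only as good as the quoted list of special modules, and Wunram's list uses the $i$-series of $n/q$, not Riemenschneider's series for $n/(n-q)$ from Theorem \ref{riemen}. For example, $\frac17(1,2)$ has two exceptional curves, and its special modules are $M_2,M_1$, not $M_5,M_3,M_1$. Your (ii) is unaffected, as is the fact that every $M_\lambda$ with $\lambda\neq0$, and hence every special module, arises.
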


\begin{proof} (i) Consider the ideal $I_k=(z^{n-k}, y)\subset \overline R_{n,q}$. Let us define
\begin{eqnarray*} 
\varphi\colon\overline R_{n,q}^2 &\rightarrow & I_k, \\
 (a,b)&\mapsto &az^{n-k}+by. \end{eqnarray*}
The columns of
$$
A_k=\begin{pmatrix}
y&-z^k\\
-z^{n-k}&x^{n-q}
\end{pmatrix}
$$
belong to $\textnormal{ker}(\varphi)$ since $yz^{n-k}-z^{n-k}y=0$ and
$-z^kz^{n-k}+x^{n-q}y=-z^n+x^{n-q}y=0$ in $\overline R_{n,q}$. We claim that they generate $\ker(\varphi)$. Let $(a,b)\in \ker(\varphi)$. Choose $\widetilde a,\widetilde b\in \mathbb C[x,y,z]$ which are not identically zero, and assume $\varphi(\widetilde a,\widetilde b)=0$. Then
$$\widetilde a z^{n-k}+\widetilde b y\in (x^{n-q}y-z^n).$$
So, there exists an element $c\in\mathbb C[x,y,z]$ such that
$$
\widetilde a z^{n-k}+\widetilde b y=c(x^{n-q}y-z^n).$$
Hence
$$(\widetilde a+cz^k)z^{n-k}+(\widetilde b-cx^{n-q})y=0$$
Since $y$ and $z^{n-k}$ are relatively prime in $\mathbb C[x,y,z]$, there exists $d\in\mathbb C[x,y,z]$ such that
$\widetilde a+cz^k=dy$ and $\widetilde b-cx^{n-q}=-dz^{n-k}$. Therefore, modulo $(x^{n-q}y-z^n)$, we get 
$$
\binom{a}{b}=d\binom{y}{-z^{n-k}}+c\binom{-z^k}{x^{n-q}}$$
So $\ker(\varphi)$ is generated by the columns of $A_k$. Hence $A_k$ is a presentation matrix of $I_k$ and we have
$$N_k=\operatorname{coker}(A_k)\cong (z^{n-k},y).$$

\noindent (ii) After an extension of scalars to $R_{n,q}$, we obtain
$$N_k\otimes_{\overline R_{n,q}}R_{n,q}\cong (z^{n-k},y)\otimes_{\overline R_{n,q}}R_{n,q}.$$
Under the inclusion $\iota$ defined by (\ref{eq-iota}), the generators are transformed into
$$
z^{n-k}\longmapsto x^{(n-q)(n-k)}y^{n-k},\qquad y\longmapsto y^n.$$
Multiplying by $y^{-(n-k)}$ gives the equivalent fractional ideal
$(x^{(n-q)(n-k)},y^k)$. The two generators transform according to the same character of $C_{n,q}$ since
$$(n-q)(n-k)\equiv qk\pmod n$$
and $y^k$ also has character $qk$ modulo $n$. So, this fractional ideal has character
$$
\lambda_k\equiv qk\pmod n$$
By the classification of rank-one reflexive $R_{n,q}$-modules by characters, its reflexive hull is the semi-invariant module $M_{\lambda_k}$. Hence we have
$$(N_k\otimes_{\overline R_{n,q}}R_{n,q})^{\vee\vee}\cong M_{\lambda_k}.$$

\noindent  (iii) Wunram's classification says that the indecomposable special MCM $R_{n,q}$-modules are $M_{i_1},M_{i_2},\ldots, M_{i_{e-2}}$. Therefore $M_{\lambda_k}$ is special if and only if
$\lambda_k\in{i_1,\ldots, i_{e-2}}$. In this case there is a unique $t$ with $1\leq t\leq e-2$ such that
$\lambda_k=i_t$. So we get 
$$(N_k\otimes_{\overline R_{n,q}}R_{n,q})^{\vee\vee}\cong M_{i_t}.$$

\noindent (iv) Since $(n,q)=1$, multiplication by $q$ induces a permutation of the nonzero residue classes modulo $n$. Hence, for every $i_t$, there exists a unique integer $1\leq k_t\leq n-1$ such that
$$qk_t\equiv i_t\pmod n$$
Applying (ii) gives
$$(N_{k_t}\otimes_{\overline R_{n,q}}R_{n,q})^{\vee\vee}\cong M_{i_t}.$$
So, we conclude that every indecomposable special MCM $R_{n,q}$-module appears as the reflexive hull of the extension of one of the matrix-factorization modules $N_k$. \end{proof}

\noindent Note that the matrix factorizations above need not exhaust all matrix factorizations of the non-isolated hypersurface $\textnormal{Spec}(\overline R_{n,q})$. However, the finite family $(A_k,B_k)$ with $1\leq k\leq n-1$ is sufficient to recover all indecomposable special MCM modules over the original quotient singularity $\textnormal{Spec}(R_{n,q})$ after the extension of scalars and taking reflexive hulls. The following example demonstrates that there are matrix factorizations outside the family $(A_k,B_k)$, and that distinct MCM modules over $\overline R_{n,q}$ may determine the same reflexive module over $R_{n,q}$.

\begin{ex} \label{ex1} Assume $r=n-q\geq 2$. Consider
$$
C=\begin{pmatrix}
xy&-z\\
-z^{n-1}&x^{r-1}
\end{pmatrix}, \qquad D=\begin{pmatrix}
x^{r-1}&z\\
z^{n-1}&xy
\end{pmatrix}
$$
We have $CD=DC=(x^ry-z^n)\textnormal{Id}_2=f\textnormal{Id}_2$, so $(C,D)$ is a $2\times2$ matrix factorization of $f$. 
Moreover, we have
$$\operatorname{Fitt}_1(\operatorname{coker}(C))=(xy, z, x^{r-1}), \qquad \operatorname{Fitt}_1(N_k)=(y, x^r, z^{\min\{k,n-k\}})$$
for every $1\leq k\leq n-1$. These ideals are distinct: indeed,
$x^{r-1}\in\operatorname{Fitt}_1(\operatorname{coker}(C))$ and $x^{r-1}\notin\operatorname{Fitt}_1(N_k)$, since reducing modulo $(y,z)$ gives $\operatorname{Fitt}_1(N_k)/(y,z)=(x^r)$. Hence, since Fitting ideals are invariant under isomorphism, we obtain $\operatorname{coker}(C)\not\cong N_k$ for every $1\leq k\leq n-1$. On the other hand, after the extension of scalars to $R_{n,q}$, the ideal $(x^{r-1},z)$ becomes $(x^{n(r-1)},x^ry)$. Multiplying by the fractional monomial $x^{-r}$ gives the equivalent fractional ideal
$(x^{n(r-1)-r},y)$. Since $n(r-1)-r\equiv q\pmod n$ both generators have character $q$. Therefore we get  $(L\otimes_{\overline R_{n,q}}R_{n,q})^{\vee\vee}\cong M_q$. Because
we have $(N_1\otimes_{\overline R_{n,q}}R_{n,q})^{\vee\vee}\cong M_q$, we see that the modules $L$ and $N_1$ are non-isomorphic over $\overline R_{n,q}$ but they determine the same reflexive module over $R_{n,q}$. Furthermore, note that this module is special when $q\in{i_1,\ldots, i_{e-2}}$. 
\end{ex}

\noindent We now turn to the dihedral case where the geometry is richer resulting some special CM modules of rank two.
\section{Dihedral singularities of type $\mathbb{D}_{n,q}$}\label{sect-dnq}

\noindent Let $n$ and $q$ be relatively prime positive integers with $1<q<n$. Put $m=n-q$. Following \cite{riemen}, we denote by $\mathcal D_{n,q}$ the dihedral subgroup of $\textnormal{GL}(2,\mathbb C)$ corresponding to these numerical data.
 More precisely, if $m$ is odd, then
$$
\mathcal D_{n,q}=(C_{2m},C_{2m}; BD_{2q},BD_{2q})$$
where $BD_{2q}$ denotes the binary dihedral group of order $4q$. If $m$ is even, write $m=2^{k-2}\ell$ with $\ell$ odd. In this case,
$$
\mathcal D_{n,q}=(C_{4m},C_{2m};BD_{2q},C_{2q})$$
In both cases $\mathcal D_{n,q}$ is a finite small subgroup of $\textnormal{GL}(2,\mathbb C)$. We denote the corresponding quotient surface singularity by
$$
\mathbb D_{n,q}=\mathbb C^2/\mathcal D_{n,q}=\operatorname{Spec}(R_{n,q}),\qquad
R_{n,q}=\mathbb C[x,y]^{\mathcal D_{n,q}}
$$
The geometry of the minimal resolution is determined by the Hirzebruch-Jung expansion
$$
\frac{n}{q}=[b_3,\ldots,b_r]=b_3-\frac{1}{b_4-\frac{1}{{\vdots \atop b_{r-1}-\frac{1}{b_r}}}}, \qquad b_i\geq 2$$
The corresponding weighted dual graph of the minimal resolution is sketched in Figure \ref{graf-Dnq}.

\begin{figure}[htbp]
\begin{center}
\begin{tikzpicture}[domain=-1.25:1.25,scale=1.2]
\draw (-2,0) circle [radius=0.1] node[above]{$-2$};
\draw[-] (-1.9,0) -- (-1.1,0) ;
\draw (-1,0) circle [radius=0.1] node[above]{$-b_3$};
\draw[-] (-0.9,0) -- (-0.1,0) ;
\draw (0,0) circle [radius=0.1] node[above]{$-b_4$};
\draw[dotted,thick] (0.2,0) -- (0.8,0) ;
\draw (1,0) circle [radius=0.1] node[above]{-$b_{r-1}$};
\draw[-] (1.1,0) -- (1.9,0) ;
\draw (2,0) circle [radius=0.1] node[above]{$-b_r$};
\draw[-] (-1,-0.1) -- (-1,-0.9);
\draw (-1,-1) circle [radius=0.1] node[right]{$-2$};
 \end{tikzpicture}
\caption{Dual resolution graph of $\mathbb{D}_{n,q}$ singularity}
\label{graf-Dnq}
\end{center}
\end{figure}

\noindent The long branch of the resolution graph is determined by the Hirzebruch-Jung continued fraction, while the two short branches consist of exceptional curves of self-intersection $-2$. So the dihedral resolution graph can be viewed as the cyclic chain together with two
additional $(-2)$-vertices attached to its first vertex. This allows much of the continued fraction notation from the cyclic case to be carried
over to the dihedral setting. In particular, the expansion
$$
\frac{n}{q}=[b_3,b_4,\ldots,b_r]
$$
determines the corresponding $i$- and $j$-series. Its dual continued
fraction
\begin{equation}\label{eq-n-nq}
\frac{n}{n-q}=[a_2,a_3,\ldots,a_{e-1}]=a_2-\frac{1}{a_3-\frac{1}{{\vdots \atop a_{e-2}-\frac{1}{a_{e-1}}}}},\qquad a_i\geq 2\end{equation}
where $e=3+\sum_{i=3}^r(b_i-2)$. The integers $a_2,a_3,\ldots, a_{e-1}$ in (\ref{eq-n-nq}) give rise to the $r$-, $c$-, and $d$-series used in  Riemenschneider's description of explicit generators of $R_{n,q}$ which we quote below.

\begin{thm} \cite{riemen}
Let
$$
v_1:=xy,\qquad
v_2:=x^{2q}+(-1)^{a_2}y^{2q},
\qquad
v_3:=x^{2q}+(-1)^{a_2-1}y^{2q}.
$$
Then the invariant ring $R_{n,q}=\mathbb C[x,y]^{\mathcal D_{n,q}}$
is generated by the invariant polynomials
$$
z_1=v_1^{2(n-q)} \qquad z_j=v_1^{r_j}v_2^{c_j}v_3^{d_j},\qquad 2\le j\le e$$
where the $r$-, $c$-, and $d$-series are defined by
\begin{align*}
c_2&=1, & d_2&=0, & r_2&=a_2(n-q)-q,\\
c_3&=0, & d_3&=1, & r_3&=r_2-(n-q),\\
c_4&=1, & d_4&=a_3-1, & r_4&=(a_3+1)r_3-r_2,
\end{align*}
and, for $5\le j\le e$,
\begin{align*}
c_j&=a_{j-1}c_{j-1}-c_{j-2},\\
d_j&=a_{j-1}d_{j-1}-d_{j-2},\\
r_j&=a_{j-1}r_{j-1}-r_{j-2}.
\end{align*}
\end{thm}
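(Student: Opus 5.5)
We verify Riemenschneider's theorem directly: first check that each $z_j$ is invariant, then show the $z_j$ generate by decomposing $\mathbb C[x,y]^{\mathcal D_{n,q}}$ in two stages.

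\textbf{Step 1 (the dihedral part).} Write $\mathcal D_{n,q}$ as generated by its cyclic part $H$ and a dihedral reflection-type element $\tau$. Concretely, $H$ is generated by $\mathrm{diag}(\zeta_{2q},\zeta_{2q}^{-1})$ twisted by the scalar factor coming from $C_{2m}$ (or $C_{4m}$ in the even case), and $\tau$ is $(x,y)\mapsto (y,\pm x)$ times a scalar. For the binary dihedral part $BD_{2q}$ alone, the invariant ring is classically generated by $xy$, $x^{2q}+(-1)^qy^{2q}$ and $xy(x^{2q}-(-1)^qy^{2q})$. On these generators the center and the scalar twist act by characters. So $v_1,v_2,v_3$ are semi-invariants for the full group, and their characters are determined by $a_2$. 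This is where the signs $(-1)^{a_2}$ come from.

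\textbf{Step 2 (invariance of $z_j$).} Compute the character $\chi$ of the monomial $v_1^{r}v_2^{c}v_3^{d}$ as a linear form in $(r,c,d)$ modulo the order of the scalar group $2m$ or $4m$, with a separate sign for $\tau$. The condition $\chi=1$ becomes a congruence such as
\[
2r+2q(c+d)\equiv 0 \pmod{2m}
\]
together with a parity condition on $c$ or $d$. Check it on $j=2,3,4$ using $r_2=a_2m-q$ and $r_3=r_2-m$. The recursions are linear, so the congruence then propagates to all $j$ by induction.

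\textbf{Step 3 (generation).} Every invariant is a polynomial in $v_1,v_2,v_3$ modulo the single relation among them, so it suffices to describe the invariant monomials $v_1^rv_2^cv_3^d$. We may reduce $v_2^2$ and $v_3^2$ using $v_2^2-v_3^2=\pm4v_1^{2q}$, or instead use $v_2v_3$ and $v_2^2$ as needed. After this reduction the exponent set $\{(r,c,d)\}$ satisfying the congruence is an affine semigroup in a rank-two lattice. This is a cyclic-type semigroup of the kind handled by Theorem~\ref{riemen}. Its Hilbert basis is given by the usual continued-fraction algorithm for $n/(n-q)$: the vectors $(r_j,c_j,d_j)$ are the vertices of the boundary of the convex hull of the nonzero lattice points, and $z_1=v_1^{2m}$ is the extreme ray.

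The main obstacle is Step 3. Specifically, one must justify that after reducing via the relation the problem really becomes two-dimensional, and one must handle the exceptional start of the sequence, namely the index $4$ with $d_4=a_3-1$ and $r_4=(a_3+1)r_3-r_2$, which breaks the uniform recursion. I would first treat the $c+d\le 1$ strata directly, and then show the $j\ge4$ vectors arise from the standard Riemenschneider Hilbert basis after the substitution $v_2v_3\leftrightarrow$ a single coordinate. Minimality, which gives embedding dimension $e$, then follows as in the cyclic case.
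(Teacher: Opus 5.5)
The paper does not prove this statement; it quotes it from Riemenschneider, so your argument has to stand on its own. Steps~1 and~2 are fine in outline. The one slip is that $xy$ is not $BD_{2q}$-invariant; only $(xy)^2$ is. Since the character of $v_1^rv_2^cv_3^d$ is a homomorphism in $(r,c,d)$, checking $j=2,3,4$ by hand and then inducting does establish invariance.

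The genuine gap is Step~3. The reduction you describe there is not merely incomplete; it is incorrect as stated.

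\emph{The normal form is not a semigroup.} Suppose you use $v_2^2-v_3^2=4(-1)^{a_2}v_1^{2q}$ to force $d\le 1$. The normal-form exponents are not closed under addition: the product of two monomials with $d=1$ contains $v_3^2$ and turns into a binomial. So no affine semigroup has $R_{n,q}$ as its semigroup ring. In fact none can, since dihedral quotient singularities are not toric.

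\emph{The target generators are not in that normal form.} The exponents $d_4=a_3-1$, $c_5=a_4$, and so on are typically $\ge 2$.

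\emph{The convex-hull description fails.} In the $(r,c+d)$-plane, take first the lattice $\Lambda=\{(r,s): r+qs\equiv 0 \pmod m\}$. Its first point on the $v_1$-ray is $v_1^{m}$, but the $\tau$-type generator acts on $v_1^m$ by $-1$, so it is not invariant. Take instead the set of invariant exponents. Then the indispensable generator $z_2$, which corresponds to $(m+r_3,1)$, is not a hull vertex: it is the midpoint of $(2m,0)$ and $2(r_3,1)$. Either way, Theorem~\ref{riemen} does not apply verbatim. Already for $D_4$ ($n=3$, $q=2$), the Hilbert basis of the three-dimensional semigroup of invariant exponents contains $v_2^2$, which only the relation removes.

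What is missing is the correct two-dimensional object and a lifting argument.

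\emph{The right semigroup.} The elements $v_2$ and $v_3$ have the same degree and differ only by a sign under the $\tau$-type generator. Hence the character of $v_1^rv_2^cv_3^d$ depends only on $(r,s)$ with $s=c+d$, together with $d \bmod 2$. For $(r,s)\in\Lambda$ the parity of $d$ is forced. So the invariant monomials project onto the non-saturated semigroup
\[
\Sigma'=(\Lambda\cap\mathbb N^2)\setminus\{((2k+1)m,0)\mid k\ge 0\}.
\]

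\emph{Generators of $\Sigma'$.} Let $u_0=(m,0)$, $u_1=(r_3,1)$, $u_2,\dots$ be the continued-fraction Hilbert basis of $\Lambda\cap\mathbb N^2$ for $m/r_3=[a_3,\dots,a_{e-1}]$. Then $\Sigma'$ is generated by $2u_0$, $u_0+u_1$, $u_1,u_2,\dots$: write a point in a unimodular subcone and absorb an odd multiple of $u_0$ into $u_0+u_1$. These are exactly the images of $z_1,z_2,z_3,z_4,\dots$. In particular $z_4\mapsto a_3u_1-u_0$, which explains the exceptional formulas at $j=4$.

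\emph{Lifting to $R_{n,q}$.} Let $v_1^rv_2^cv_3^d$ be an invariant monomial with $(r,s)=\sum_j k_j(r_j,c_j+d_j)$. Then $\prod_j z_j^{k_j}=v_1^rv_2^Cv_3^D$ with $C+D=s$ and $D\equiv d \pmod 2$. The difference from $v_1^rv_2^cv_3^d$ is a linear combination of invariant monomials $v_1^{r+2q}v_2^{c'}v_3^{d'}$ with $c'+d'=s-2$. Induction on $s$ then finishes.

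Without these ingredients, your Step~3 is an assertion rather than a proof.
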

With respect to these generators,
\begin{cor}\cite{riemen}  The quotient singularity 
$\mathcal{D}_{n,q}=\operatorname{Spec}(R_{n,q})$ is defined by the following relations:
\begin{align*}
z_2^2
&=z_1\bigl(z_3^2+z_1^{a_2-1}\bigr), & \\
z_1z_i&=z_2z_3^{a_3-2}\cdots z_{i-2}^{a_{i-2}-2}z_{i-1}^{a_{i-1}-1}, & 4\leq i\leq e, \qquad  \qquad  \qquad \\
z_2z_i&=z_3^{a_3-2}\cdots z_{i-2}^{a_{i-2}-2}z_{i-1}^{a_{i-1}-1}\bigl(z_3^2+z_1^{a_2-1}\bigr), & 4\leq i\leq e,  \qquad  \qquad  \qquad \\
z_{i-1}z_{i+1}&=z_i^{a_i}, &  4\leq i\leq e-1,  \qquad  \qquad  \qquad  \\
z_jz_i&=z_{j+1}^{a_{j+1}-1}z_{j+2}^{a_{j+2}-2}\cdots z_{i-2}^{a_{i-2}-2}z_{i-1}^{a_{i-1}-1}, & 4\leq j+1<i-1\leq e-1. \qquad  \qquad  \qquad
\end{align*}
\end{cor}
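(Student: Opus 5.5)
The plan is to verify each displayed relation directly on the monomial-type generators and then to argue that these relations generate the whole ideal of relations. Write $z_1=v_1^{2m}$ with $m=n-q$ and $z_j=v_1^{r_j}v_2^{c_j}v_3^{d_j}$, and set
\[
v_2^2-v_3^2=(-1)^{a_2}\,4\,v_1^{2q},
\]
which follows from $v_2+v_3=2x^{2q}$ and $v_2-v_3=2(-1)^{a_2}y^{2q}$. This identity is the only non-monomial input. I will also use the elementary relations among the exponent series that come from the recursions: $r_2=a_2m-q$, $r_3=r_2-m$, and, for $j\ge5$, the three-term recursions shared by $c_j$, $d_j$ and $r_j$.

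First I will check the relation $z_2^2=z_1(z_3^2+z_1^{a_2-1})$. After dividing by $v_1^{2r_3}$, it becomes $v_1^{2m}v_2^2=v_1^{2m}v_3^2+v_1^{2ma_2-2r_3}\cdot(\text{const})$. Here one checks that $2ma_2-2r_3-2m=2q$, and that the constant matches $(-1)^{a_2}4$ after absorbing scalars. Rescaling $z_1$ by a constant, which does not affect the statement, takes care of the factor 4.

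The binomial relations $z_{i-1}z_{i+1}=z_i^{a_i}$ and $z_jz_i=\cdots$ I will verify by comparing exponent vectors $(r,c,d)$. Since these equations involve only monomials in $v_1,v_2,v_3$ and no difference $v_2^2-v_3^2$, they reduce to linear identities among the series. These follow by induction from the recursions, exactly as in the cyclic case in Theorem~\ref{riemen}.

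The mixed relations $z_1z_i=z_2z_3^{a_3-2}\cdots$ and $z_2z_i=\cdots(z_3^2+z_1^{a_2-1})$ are handled the same way. The first is an exponent comparison. The second combines the first with the $z_2^2$ relation: multiply the first by $z_2$ and substitute $z_2^2=z_1(z_3^2+z_1^{a_2-1})$, then cancel $z_1$ in the domain $R_{n,q}$.

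The main obstacle is completeness, namely showing that these relations generate the full kernel of $\mathbb C[Z_1,\dots,Z_e]\to R_{n,q}$. I will approach this by a Hilbert series, or length, comparison. Let $J$ be the ideal spanned by the listed relations. Using them as rewriting rules, every monomial in the $Z_i$ reduces modulo $J$ to a normal form. These normal forms are: $Z_1^{\alpha}Z_2^{\epsilon}Z_3^{\beta}$ with $\epsilon\in\{0,1\}$; products $Z_1^{\alpha}Z_iZ_{i+1}^{\cdots}$ along the chain $i\ge3$ with no two nonadjacent indices; and the like, mirroring the cyclic normal forms.

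I will then show that the images of these normal forms in $\mathbb C[x,y]$ are linearly independent. To do this, I pass to the associated graded with respect to $v_1$-degree, or to the leading terms in $x$, where they become distinct monomials. Hence $\mathbb C[Z]/J\to R_{n,q}$ is injective. Alternatively, and more economically, I would simply invoke Riemenschneider's computation in \cite{riemen}, since the statement is quoted from there, and present only the verification above as a consistency check.
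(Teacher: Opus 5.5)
The paper gives no proof of this statement. It is quoted from \cite{riemen}, so your fallback of citing Riemenschneider is exactly the paper's route. Your check that the displayed relations hold is correct apart from one slip. From $v_2^2-v_3^2=4(-1)^{a_2}v_1^{2q}$ you get $z_2^2=z_1z_3^2+4(-1)^{a_2}z_1^{a_2}$, and rescaling $z_1$ alone cannot normalize this. Putting $z_1=\lambda z_1'$ produces the coefficients $\lambda$ and $4(-1)^{a_2}\lambda^{a_2}$, which cannot both equal $1$, and it also breaks the binomials $z_1z_i=z_2z_3^{a_3-2}\cdots z_{i-1}^{a_{i-1}-1}$. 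Instead, replace $v_2,v_3$ by $v_2/\kappa,v_3/\kappa$ with $\kappa^2=4(-1)^{a_2}$, i.e.\ $z_j\mapsto\kappa^{-(c_j+d_j)}z_j$ for $j\ge2$. Every other relation is an identity of exponent vectors $(r,c,d)$, so it is unaffected.

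The self-contained completeness argument, which is the real content of the statement, has a genuine gap. Your normal forms contain both $Z_1^{\alpha}Z_iZ_{i+1}^{\gamma}$ with $i\ge4$ and $Z_1^{\alpha}Z_2Z_3^{\beta}$. But the listed relation $z_1z_4=z_2z_3^{a_3-1}$ says $Z_1Z_4$ and $Z_2Z_3^{a_3-1}$ have the same image. So the images are not linearly independent, and the claim that they acquire distinct leading $x$-monomials is false. The leading term of the image of $Z^{\gamma}$ is $x^{R+2qK}y^{R}$, with $R=\sum_j\gamma_jr_j$ (where $r_1:=2m$) and $K=\sum_j\gamma_j(c_j+d_j)$. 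Independence therefore amounts to injectivity of $\gamma\mapsto(R,K)$ on a correctly chosen set of standard monomials, and you have neither chosen that set nor checked injectivity.

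The route can be repaired as follows.
\begin{enumerate}
\item[(a)] Order monomials by $xy$-degree, then by $Z$-degree with fewer factors counting as larger, then by the weight $Z_1\mapsto0$, $Z_2\mapsto1$, $Z_j\mapsto j^2$ for $j\ge3$. This convex weight breaks all the quadratic ties.
\item[(b)] Under this order the leading terms are $Z_2^2$, $Z_1Z_i$ and $Z_2Z_i$ $(i\ge4)$, $Z_{i-1}Z_{i+1}$, and $Z_jZ_i$ $(i\ge j+3)$. The standard monomials are therefore $Z_1^aZ_2^{\epsilon}Z_3^b$ with $\epsilon\in\{0,1\}$, and $Z_i^bZ_{i+1}^c$ with $3\le i\le e-1$. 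In particular, $Z_1$ never appears on the chain.
\item[(c)] For injectivity, put $v_j=(r_j,c_j+d_j)$. Then $v_2=(m,0)+v_3$. The sequence $v_2':=(m,0),v_3,\dots,v_e$ satisfies $v_{j-1}+v_{j+1}=a_jv_j$ for $3\le j\le e-1$, with $v_2'$ in place of $v_2$. All its consecutive determinants equal $m>0$.
\item[(d)] Consequently the cones spanned by consecutive vectors (all in the closed first quadrant) meet only along common rays, where the corresponding monomials coincide. Each family $Z_i^bZ_{i+1}^c$ maps injectively into its cone. The map $Z_1^aZ_2^{\epsilon}Z_3^b\mapsto(2a+\epsilon)(m,0)+(\epsilon+b)v_3$ is injective by the parity of $2a+\epsilon$.
\end{enumerate}
The division algorithm gives spanning by these standard monomials, and (d) gives their independence; together with surjectivity from Riemenschneider's generator theorem, this shows the listed relations generate the kernel. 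As written, your proposal only shows that the relations hold. Generation still rests on \cite{riemen}, as it does in the paper.
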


\noindent We now construct a hypersurface model of the dihedral quotient singularity by eliminating the variables in Riemenschneider's defining equations. Since these equations are expressed in terms of the vector
$[a]:=[a_2,\ldots, a_{e-1}]$, whereas the topology of the singularity is encoded by the Hirzebruch-Jung vector $[b]:=[b_3,\ldots,b_r]$, it is useful to relate these two descriptions. The following proposition provides an inductive description of $[a]$ in
terms of $[b]$.

\begin{prop}\label{dnq-gen} Suppose that the dual continued fraction associated with the truncated
vector $[b']=[b_4,\ldots, b_r]$ is $[a']=[a_2,a_3,\ldots, a_\ell]$. Then the dual continued fraction associated with
$[b]=[b_3,b_4,\ldots, b_r]$ is $[a]=[\underbrace{2,\ldots, 2}_{b_3-2}, a_2+1, a_3,\ldots, a_\ell]$. 
\end{prop}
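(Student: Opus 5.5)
The plan is to verify the claim directly at the level of the rational numbers represented by the continued fractions. We then conclude by uniqueness of Hirzebruch--Jung expansions: every rational number $>1$ has a unique expansion $[c_1,\ldots,c_s]$ with all $c_i\geq 2$. So it suffices to show two things: the proposed vector $[\underbrace{2,\ldots,2}_{b_3-2},a_2+1,a_3,\ldots,a_\ell]$ has all entries $\geq 2$, and it evaluates to $n/(n-q)$. The first is clear, since $a_2+1\geq 3$ and the remaining $a_i\geq 2$.

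\textbf{Setting up the numerics.} Write $n'/q'=[b_4,\ldots,b_r]$ with $(n',q')=1$ and $0<q'<n'$. From $n/q=b_3-\frac{1}{n'/q'}=\frac{b_3n'-q'}{n'}$ we read off $n=b_3n'-q'$ and $q=n'$; this fraction is already reduced because $(n',q')=1$. Hence $n-q=(b_3-1)n'-q'$. By hypothesis $[a_2,\ldots,a_\ell]=n'/(n'-q')$. Therefore
$$[a_2+1,a_3,\ldots,a_\ell]=\frac{n'}{n'-q'}+1=\frac{2n'-q'}{n'-q'},$$
because increasing the first entry of a continued fraction by one adds $1$ to its value.

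\textbf{Key lemma.} For $k\geq 0$ and any $x$,
$$[\underbrace{2,\ldots,2}_{k},x]=\frac{(k+1)x-k}{kx-(k-1)}.$$
We prove this by induction on $k$, using $[2,y]=2-1/y$. The case $k=0$ is trivial, and the inductive step is a one-line Möbius-transformation computation. Substitute $x=\frac{2n'-q'}{n'-q'}$, $k=b_3-2$, and clear the denominator $n'-q'$. The numerator becomes $(k+1)(2n'-q')-k(n'-q')=(k+2)n'-q'=b_3n'-q'=n$. The denominator becomes $k(2n'-q')-(k-1)(n'-q')=(k+1)n'-q'=(b_3-1)n'-q'=n-q$. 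So the proposed vector evaluates to $n/(n-q)$, and uniqueness of the expansion finishes the argument.

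\textbf{Main obstacle.} The only delicate point is the degenerate case $r=3$, where $[b']$ is empty. In that case $n/q=b_3$, so $q=1$, and $n/(n-1)=[\underbrace{2,\ldots,2}_{n-1}]$. This is consistent with the formula only under the convention that the empty vector has value $n'/q'=1/0$ with dual $[a_2]=[1]$, so that $a_2+1=2$. I would treat this base case separately by the direct computation $[\underbrace{2,\ldots,2}_{s}]=(s+1)/s$, which follows from the key lemma with $x=2$. I would also state the convention explicitly so that the inductive use of the proposition (for instance, computing $e$) remains valid.
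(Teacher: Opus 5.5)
Your argument is correct, but it runs in the opposite direction from the paper's.

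The paper passes to a third level $n''/q''=[b_5,\ldots,b_r]$ to get $q'=n''$. It then runs the continued-fraction algorithm forward on $\frac{n}{n-q}=\frac{b_3n'-n''}{(b_3-1)n'-n''}$, peeling off one $2$ at a time. It has to check separately that the process stops after exactly $b_3-2$ steps, by showing the remainder $\frac{2n'-n''}{n'-n''}$ exceeds $2$. Finally it identifies that remainder with $[a_2+1,a_3,\ldots,a_\ell]$ via an auxiliary integer $p$ defined by $\frac{n'}{n'-n''}=a_2-\frac{p}{n'-n''}$.

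You instead evaluate the candidate vector and invoke uniqueness of expansions with entries $\geq 2$. Your closed M\"obius formula for $[2,\ldots,2,x]$ packages the paper's iteration into a single induction. Your remark that raising the first entry by one adds $1$ replaces the whole $p$-computation by the identity $\frac{2n'-q'}{n'-q'}=1+\frac{n'}{n'-q'}$. Uniqueness then absorbs the termination check.

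What you pay is reliance on uniqueness of Hirzebruch--Jung expansions. This is standard: since $[c_2,\ldots,c_s]>1$, the first entry is forced to be $\lceil x\rceil$. The paper, by contrast, derives the expansion directly.

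What you gain is a shorter, more transparent computation that never uses $n''/q''$. In particular, the case $r=4$ needs no tacit convention $n''/q''=1/0$. Your separate treatment of the empty case $r=3$ also goes slightly beyond what the statement requires.
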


\begin{proof}
Let $\displaystyle \frac{n}{q}=[b_3,b_4,\ldots, b_r]$ and set
$$
\frac{n'}{q'}=[b_4,\ldots, b_r],\qquad \frac{n''}{q''}=[b_5,\ldots, b_r].
$$
Since
$$
\frac{n'}{q'}=b_4-\frac{q''}{n''}=\frac{b_4n''-q''}{n''}$$
we obtain $q'=n''$. Therefore
$$
\frac{n}{q}=b_3-\frac{q'}{n'}=b_3-\frac{n''}{n'}$$
which implies $n=b_3n'-n''$ with $q=n'$.

\noindent We first determine the initial coefficients of the dual
continued fraction. If $b_3=2$, then $\displaystyle\frac{n}{q}=2-\frac{n''}{n'}$ and therefore
$$
\frac{n}{n-q}=\frac{2n'-n''}{n'-n''}
$$
The computation below shows that this continued fraction is
$[a_2+1,a_3,\ldots, a_\ell]$. 

\noindent Assume now that $b_3>2$. Then
$$
\frac{n}{n-q}=\frac{b_3n'-n''}{(b_3-1)n'-n''}=2-\frac{(b_3-2)n'-n''}{(b_3-1)n'-n''}$$
Repeating the same computation gives
$$
\frac{n}{n-q}=2-\frac{1}{2-\frac{(b_3-3)n'-n''}{(b_3-2)n'-n''}}
$$
After $k$ iterations, we obtain
$$
\frac{n}{n-q}=2-\frac{1}{2-\frac{1}{\ddots-\frac{1}{2-\frac{(b_3-k-1)n'-n''}{(b_3-k)n'-n''}}}}$$
where there are $k$ leading coefficients equal to $2$. The process terminates after $b_3-2$ iterations. Indeed, the remaining
fraction is $\frac{2n'-n''}{n'-n''}$ and $2n'-n''>2(n'-n'')$ for $n''>0$. Hence the first $b_3-2$ coefficients of the dual continued
fraction are equal to $2$.

\noindent It remains to determine the remaining coefficients. By assumption,
$$
\frac{n'}{n'-q'}=\frac{n'}{n'-n''}=[a_2,a_3,\ldots, a_\ell]$$
Therefore,
$$
\frac{n'}{n'-n''}=a_2-\frac{p}{n'-n''}$$
for some integer $0<p<n'-n''$. Equivalently,
$$
n'=\frac{a_2n''+p}{a_2-1}.$$
Moreover,
$$
\frac{p}{n'-n''}=\frac{(a_2-1)p}{n''+p},
$$
whose continued fraction is $[a_3,\ldots, a_\ell]$.  Substituting the above expression for $n'$ into
$\frac{2n'-n''}{n'-n''}$ gives
$$
\frac{2n'-n''}{n'-n''}=\frac{(a_2+1)n''+2p}{n''+p}=(a_2+1)-\frac{(a_2-1)p}{n''+p}.$$
Hence
$$
\frac{2n'-n''}{n'-n''}=[a_2+1,a_3,\ldots, a_\ell]$$
Combining this with the initial string of $2$'s, we conclude that 
$$
[a]=[\underbrace{2,\ldots, 2}_{b_3-2}, a_2+1, a_3,\ldots, a_\ell].$$
\end{proof}

\begin{prop}\label{dnq-4} Let $\mathbb{D}_{n,q}$ be the dihedral quotient singularity whose long
branch is determined by $[b]=[b_3,b_4]$. Then the associated dual continued fraction is
$$[a]=[\underbrace{2,\ldots, 2}_{b_3-2}, 3, \underbrace{2,\ldots,2}_{b_4-2}].
$$
\end{prop}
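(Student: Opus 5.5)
This is a direct application of Proposition \ref{dnq-gen} with $[b']=[b_4]$. The plan is to first compute the dual continued fraction of the one-term vector $[b_4]$, and then feed it into the inductive rule.

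\emph{Step 1: the base case.} For $[b']=[b_4]$ we have $n'/q'=b_4$, so $n'=b_4$ and $q'=1$. Then
\[
\frac{n'}{n'-q'}=\frac{b_4}{b_4-1}=2-\frac{b_4-2}{b_4-1}.
\]
This is the classical duality $[b_4]\leftrightarrow[2,\ldots,2]$ with $b_4-1$ twos. It follows from $k/(k-1)=2-1/\bigl((k-1)/(k-2)\bigr)$ by induction on $b_4$, with base case $2/1=[2]$. Hence $[a']=[a_2,\ldots,a_\ell]=[\underbrace{2,\ldots,2}_{b_4-1}]$, so $a_2=2$ and $a_3=\cdots=a_\ell=2$ with $\ell-2=b_4-2$.

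\emph{Step 2: the inductive rule.} Applying Proposition \ref{dnq-gen} gives
\[
[a]=[\underbrace{2,\ldots,2}_{b_3-2},\,a_2+1,\,a_3,\ldots,a_\ell]=[\underbrace{2,\ldots,2}_{b_3-2},\,3,\,\underbrace{2,\ldots,2}_{b_4-2}],
\]
which is the claimed formula.

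\emph{Main obstacle.} The only delicate point is the degenerate case $b_4=2$. There $[a']=[2]$ has no tail, so the last block of twos is empty. One also has to check that the proof of Proposition \ref{dnq-gen} still holds: it used $n''$ and the remainder $p$ with $0<p<n'-n''$. For $[b_4]$ we interpret $n''=q'=1$, and when $b_4=2$ we get $n'-n''=1$, which forces $p=0$. In that case the argument of Proposition \ref{dnq-gen} still goes through with $p=0$, and it yields $(2n'-n'')/(n'-n'')=a_2+1=3$ as a terminal coefficient. Alternatively, I would verify this case directly: $n=2b_3-1$, $q=2$, and $n/(n-2)=[\underbrace{2,\ldots,2}_{b_3-2},3]$, checked by the same iteration as in the proof of Proposition \ref{dnq-gen}.

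As a final sanity check, the length is $e-2=(b_3-2)+1+(b_4-2)$. This agrees with $e=3+(b_3-2)+(b_4-2)$.
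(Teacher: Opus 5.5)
Your proof is correct and follows the paper's route: compute the dual continued fraction of the one-term vector $[b_4]$, then apply Proposition \ref{dnq-gen}. Your count of $b_4-1$ twos for the dual of $[b_4]$ (so $a_2=2$ followed by $b_4-2$ twos) is the correct one, whereas the paper's proof writes $b_4-2$ at that step. Your separate check of the degenerate case $b_4=2$, where $n''=1$ and $p=0$ fall outside the stated hypotheses in the proof of Proposition \ref{dnq-gen}, fills in a point the paper leaves implicit.
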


\begin{proof}
The continued fraction $[b_4]$ corresponds to the cyclic singularity
$\frac{b_4}{1}$, whose dual continued fraction is
$[\underbrace{2,\ldots, 2}_{b_4-2}]$. Applying Proposition \ref{dnq-gen} to the vector $[b]=[b_3,b_4]$, we obtain
$$[a]=[\underbrace{2,\ldots,2}_{b_3-2}, 3, \underbrace{2,\ldots,2}_{b_4-2}]$$
\end{proof}

\begin{rem} The preceding proposition shows the duality between the continued
fractions $[b]$ and $[a]$. More generally, suppose that
$$[b]=[b_3,2,\ldots, 2, b_r], \qquad b_3, b_r\geq 2$$
Applying Proposition \ref{dnq-gen} repeatedly we get 
$$[a]=[\underbrace{2,\ldots, 2}_{b_3-2},r-1,\underbrace{2,\ldots, 2}_{b_r-2}].$$
Conversely, the dual continued fraction associated with the latter vector
is precisely $[b]=[b_3,2,\ldots,2, b_r]$. Thus, strings of $2$'s along the long branch are transformed into a
single central coefficient in the dual continued fraction, while the
coefficients at the two ends remain unchanged.
\end{rem}

\subsection{Invariant rings and continued fractions} Our next goal is to obtain a hypersurface model birational to
$\mathbb D_{n,q}$ by eliminating the intermediate generators in Riemenschneider's presentation. Throughout this subsection, we assume
$e\geq 5$.

\begin{prop} \cite{riemen} \label{reduced-ideal}
The defining ideal of the invariant ring $R_{n,q}$ is generated by the relations
\begin{align*}
z_2^2    &=z_1\bigl(z_3^2+z_1^{a_2-1}\bigr),\\
z_1z_e    &=z_2z_3^{a_3-2}\cdots       z_{e-2}^{a_{e-2}-2}z_{e-1}^{a_{e-1}-1},\\
z_{e-2}z_e     &=z_{e-1}^{a_{e-1}},\\
z_jz_{j+3}     &=z_{j+1}^{a_{j+1}-1}z_{j+2}^{a_{j+2}-1},    \qquad 3\leq j\leq e-3
\end{align*}
\end{prop}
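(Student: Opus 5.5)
The plan is to show that the listed relations generate the full relation ideal $I$ of Riemenschneider's Corollary, i.e.\ that every relation of that corollary lies in the ideal $J$ generated by the four families in the statement. Since $J\subseteq I$ is clear (each listed relation occurs in the corollary: the first is the first relation, the second is the case $i=e$ of the second family, the third is the case $i=e-1$ of the fourth family, and the last is the case $i=j+3$ of the fifth family), it suffices to derive the remaining relations modulo $J$. A purely ideal-theoretic derivation may fail, because some relations are only obtained after cancelling a nonzerodivisor. I would therefore argue inside the domain $R_{n,q}$, or equivalently show $J$ is prime up to saturation, by working on the open set $z_{e-1}\neq 0$. There, the relations $z_jz_{j+3}=z_{j+1}^{a_{j+1}-1}z_{j+2}^{a_{j+2}-1}$ together with $z_{e-2}z_e=z_{e-1}^{a_{e-1}}$ let one solve successively for $z_{e}, z_{e-3},z_{e-4},\ldots$ as monomials in neighbouring variables, localized at $z_{e-1}$ and $z_{e-2}$.

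The key steps are as follows. First, recover the three-term relations $z_{i-1}z_{i+1}=z_i^{a_i}$ for $4\le i\le e-1$ by descending induction on $i$. Multiply $z_{i-1}z_{i+1}$ by $z_{i+2}$, use the four-term relation with $j=i-1$ to rewrite $z_{i-1}z_{i+2}$, and then use the inductive three-term relation $z_iz_{i+2}=z_{i+1}^{a_{i+1}}$. This gives $z_{i+2}(z_{i-1}z_{i+1}-z_i^{a_i})\in J$, and we cancel $z_{i+2}$. Second, derive the long relations $z_jz_i=\cdots$ for $j+1<i-1$ by induction on $i-j$. The four-term relations are the base case, and the step uses a three-term relation to increase the gap, again after multiplying by a suitable $z_k$. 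Third, derive $z_1z_i$ and $z_2z_i$ for $4\le i\le e-1$ by descending induction from $z_1z_e$. Multiply $z_1z_{i}$ by $z_{i+1}$ and compare with $z_1z_{i+1}\cdot z_i$ using the relations already found. The $z_2z_i$ family then follows by multiplying the $z_1z_i$ relation by $z_2$ and substituting $z_2^2=z_1(z_3^2+z_1^{a_2-1})$, then cancelling $z_1$. Throughout, all exponent bookkeeping reduces to the recursion $c_j=a_{j-1}c_{j-1}-c_{j-2}$ (and similarly for $d$, $r$), so each identity can be double-checked by comparing monomial exponents in $v_1,v_2,v_3$.

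The main obstacle is justifying the cancellations, that is, showing $J$ is saturated with respect to the variables we divide by. I would handle it by showing that $\mathbb C[z_1,\dots,z_e]/J$ is Cohen--Macaulay of dimension 2 with no embedded or extra components, for instance by checking that on each coordinate chart $z_k\ne 0$ the quotient by $J$ coincides with that by $I$, and that $V(J)\cap\{z_1=\cdots\}$ has dimension $<2$. If this proves too heavy, the fallback is to use the fact that $J$ defines an irreducible surface whose degree equals that of $R_{n,q}$ (its multiplicity $e-1$ for a rational singularity), so that $J=I$. Alternatively, one can note that the statement is quoted from \cite{riemen}, so it can ultimately rest on Riemenschneider's Gröbner-type argument.
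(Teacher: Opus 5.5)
The step you flag yourself as the main obstacle, namely cancelling $z_{i+2}$, $z_{i+1}$ and $z_1$ modulo $J$, is a genuine gap. It cannot be closed, because the proposition read literally as $J=I$ is false. The paper gives no proof of its own, only the reference to Riemenschneider, whose theorem is the full list in the preceding Corollary.

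Arguing ``inside the domain $R_{n,q}$'' does not help. Cancelling there only shows that the derived polynomials lie in $I$, which is already known, not that they lie in $J$.

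There is a structural reason the statement fails. $J$ has $e-2=\operatorname{ht} I$ generators, so $J=I$ would make $R_{n,q}$ a complete intersection, hence Gorenstein. But a Gorenstein rational surface singularity is a rational double point, of embedding dimension $3$. Equivalently, by Wahl, $I$ needs $\binom{e-1}{2}$ generators, which is exactly the length of the Corollary's list.

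There is also a concrete witness. The first two listed relations vanish identically on $\{z_1=z_2=0\}$, and the others do not involve $z_1,z_2$. So replacing $z_1,z_2$ by $0$ in any point of $V(I)$ with $z_3\neq0$ gives a point of $V(J)$ that violates $z_2z_4=z_3^{a_3-1}(z_3^2+z_1^{a_2-1})$.

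For $\mathbb D_{8,3}$, where $[a]=[2,3,2]$ and $e=5$, one has $J=(z_2^2-z_1(z_3^2+z_1),\ z_1z_5-z_2z_3z_4,\ z_3z_5-z_4^2)$. Here $z_5(z_1z_4-z_2z_3^2)\in J$, but $z_1z_4-z_2z_3^2\notin J$, since it does not vanish at $(1,\sqrt{2},1,0,0)\in V(J)$. This is precisely the first cancellation in your third step.

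Your fallbacks fail for the same reason. In this example $V(J)=V(I)\cup\{z_1=z_2=0,\ z_3z_5=z_4^2\}\cup\{z_4=z_5=0,\ z_2^2=z_1(z_3^2+z_1)\}$. For $e=6$, $V(J)$ contains the three-dimensional set $\{z_5=z_6=0,\ z_2^2=z_1(z_3^2+z_1^{a_2-1})\}$, so it is not even a surface. Hence no Cohen--Macaulayness, irreducibility or degree argument can give $J=I$, and Riemenschneider's result concerns the full list, not this subset.

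What your computations do prove is the correct form of the statement. The inclusion $J\subseteq I$ is right. Your three inductions are also algebraically sound: each yields $z_kg\in J+(\text{previously derived relations})$ for a single variable $z_k$, with correct exponent bookkeeping. Hence every relation of the Corollary lies in $J:(z_1\cdots z_e)^\infty$. Since $I$ is prime and contains no monomial, it follows that $I=J:(z_1\cdots z_e)^\infty$. In other words, $J$ and $I$ agree after inverting the coordinates, but not before.

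This saturated statement, and in fact just $J\subseteq I$, is all the paper uses afterwards. Lemma~\ref{elimination-D} only needs the listed relations to hold in the domain $R_{n,q}$ in order to solve for $z_2,\dots,z_{e-2}$ in its fraction field.
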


\noindent We first express the intermediate generators $z_2,z_3,\ldots, z_{e-2}$ rationally in terms of
$z_1,z_{e-1}$ and $z_e$. For this purpose, define two sequences $\{s_j\}$ and $\{\widehat{s}_j\}$ by
\begin{align*}
s_e&=0,
& s_{e-1}&=1,
& s_{e-2}&=a_{e-1},\\
\widehat{s}_{e-1}&=0,
& \widehat{s}_{e-2}&=1,
& \widehat{s}_{e-3}&=a_{e-2}
\end{align*}
and recursively by
\begin{align*}
s_j&=(a_{j+1}-1)s_{j+1} +(a_{j+2}-1)s_{j+2} -s_{j+3}, &&3\leq j\leq e-3,\\
\widehat{s}_j&=(a_{j+1}-1)\widehat{s}_{j+1}+(a_{j+2}-1)\widehat{s}_{j+2}-\widehat{s}_{j+3},&&3\leq j\leq e-4
\end{align*}

\noindent We also set
$\alpha =1+\sum_{j=3}^{e-1}(a_j-2)s_j$ and $\beta =1+\sum_{j=3}^{e-2}(a_j-2)\widehat{s}_j$. 

\begin{lem}\label{elimination-D} In the fraction field of $R_{n,q}$, we have
$$z_2=\frac{z_1z_e^\beta}{z_{e-1}^\alpha}$$
and \[ z_j=\frac{z_{e-1}^{s_j}}{z_e^{\widehat{s}_j}},\qquad  3\leq j\leq e-2.\]

\end{lem}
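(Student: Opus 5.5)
Since $R_{n,q}$ is a domain, all the relations of Proposition \ref{reduced-ideal} hold in its fraction field, and every $z_j$ is a nonzero monomial in $v_1,v_2,v_3$. The plan is a downward induction on $j$, starting from the two ends $z_{e-1}$ and $z_e$ and solving the "three-term'' relations for the lowest-indexed generator each time.

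\textbf{Base cases.} For $j=e-1$ and $j=e-2$ the claim holds trivially with the prescribed initial values: $z_{e-1}=z_{e-1}^{1}/z_e^{0}$, since $s_{e-1}=1$ and $\widehat s_{e-1}=0$. For $j=e-2$, the relation $z_{e-2}z_e=z_{e-1}^{a_{e-1}}$ gives $z_{e-2}=z_{e-1}^{a_{e-1}}/z_e$, matching $s_{e-2}=a_{e-1}$ and $\widehat s_{e-2}=1$. It is convenient to also allow $j=e$, where $z_e=z_{e-1}^{s_e}/z_e^{-1}$, so I set $\widehat s_e:=-1$ formally.

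\textbf{Inductive step.} For $3\le j\le e-3$ the relation $z_jz_{j+3}=z_{j+1}^{a_{j+1}-1}z_{j+2}^{a_{j+2}-1}$ gives
\[
z_j=\frac{z_{j+1}^{a_{j+1}-1}z_{j+2}^{a_{j+2}-1}}{z_{j+3}}.
\]
Substituting the inductive expressions for $z_{j+1},z_{j+2},z_{j+3}$, the exponent of $z_{e-1}$ becomes exactly
\[
(a_{j+1}-1)s_{j+1}+(a_{j+2}-1)s_{j+2}-s_{j+3}=s_j ,
\]
and the exponent of $z_e$ in the denominator obeys the same recursion for $\widehat s_j$. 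The main bookkeeping obstacle is the boundary index $j=e-3$. Here $z_{j+3}=z_e$, and the recursion for $\widehat s$ as stated only runs up to $e-4$, with $\widehat s_{e-3}=a_{e-2}$ imposed as an initial value. So I will check directly that
\[
(a_{e-2}-1)\cdot 0+(a_{e-1}-1)\cdot 1-(-1)\cdot\ldots
\]
behaves correctly. Computing honestly, the exponent of $z_e$ in the denominator is $(a_{e-2}-1)\widehat s_{e-2}+(a_{e-1}-1)\widehat s_{e-1}+1$ from $z_{e-2}=z_{e-1}^{a_{e-1}}/z_e$, $z_{e-1}$, and the division by $z_e$, so $z_{e-3}=z_{e-2}^{a_{e-2}-1}z_{e-1}^{a_{e-1}-1}/z_e$. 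This yields $\widehat s_{e-3}=(a_{e-2}-1)+1=a_{e-2}$, agreeing with the stated initial value. Likewise $s_{e-3}=(a_{e-2}-1)a_{e-1}+(a_{e-1}-1)$, which is consistent with the $s$-recursion at $j=e-3$ using $s_e=0$. With these checks done, the induction runs down to $j=3$.

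\textbf{The generator $z_2$.} I do not use the quadratic relation $z_2^2=z_1(z_3^2+z_1^{a_2-1})$, which is not monomial. Instead I use the second relation of Proposition \ref{reduced-ideal}:
\[
z_2=\frac{z_1z_e}{z_3^{a_3-2}\cdots z_{e-2}^{a_{e-2}-2}z_{e-1}^{a_{e-1}-1}} .
\]
Inserting $z_j=z_{e-1}^{s_j}/z_e^{\widehat s_j}$, the total power of $z_{e-1}$ in the denominator is
\[
\sum_{j=3}^{e-2}(a_j-2)s_j+(a_{e-1}-1)=\alpha ,
\]
using $s_{e-1}=1$. The total power of $z_e$ in the numerator is
\[
1+\sum_{j=3}^{e-2}(a_j-2)\widehat s_j=\beta .
\]
This gives $z_2=z_1z_e^{\beta}/z_{e-1}^{\alpha}$.

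As a sanity check I would verify one small case, say $e=5$, against the monomial expressions $v_1^{r_j}v_2^{c_j}v_3^{d_j}$.
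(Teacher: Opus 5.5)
Your proof is correct and follows essentially the same route as the paper. Both arguments get $z_{e-2}$ from $z_{e-2}z_e=z_{e-1}^{a_{e-1}}$, run a downward induction through $z_jz_{j+3}=z_{j+1}^{a_{j+1}-1}z_{j+2}^{a_{j+2}-1}$, and then solve the second relation of Proposition \ref{reduced-ideal} for $z_2$. Your direct check at $j=e-3$ (effectively taking $\widehat s_e=-1$) is cleaner than the paper's appeal to ``$\widehat s_e=0$'', but you should delete the garbled line just before your honest computation.
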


\begin{proof} The relation $z_{e-2}z_e=z_{e-1}^{a_{e-1}}$ gives
$$
z_{e-2}=\frac{z_{e-1}^{a_{e-1}}}{z_e}=\frac{z_{e-1}^{s_{e-2}}}{z_e^{\widehat{s}_{e-2}}}.$$
By taking $j=e-3$ in 
$$z_jz_{j+3}=z_{j+1}^{a_{j+1}-1}z_{j+2}^{a_{j+2}-1}$$
and, using $s_e=0$ and $\widehat{s}_e=0$ with $s_{e-1}=1$ and $\widehat{s}_{e-1}=0$, we obtain
$$z_{e-3}=\frac{z_{e-1}^{s_{e-3}}}{z_e^{\widehat{s}_{e-3}}}.$$
Now let us assume that the formulas are known for $z_{j+1}, z_{j+2}$ and $z_{j+3}$. Then
$$z_j=\frac{z_{j+1}^{a_{j+1}-1}z_{j+2}^{a_{j+2}-1}}{z_{j+3}}=\frac{z_{e-1}^{(a_{j+1}-1)s_{j+1}+(a_{j+2}-1)s_{j+2}-s_{j+3}}}{z_e^{(a_{j+1}-1)\widehat{s}_{j+1}+(a_{j+2}-1)\widehat{s}_{j+2}-\widehat{s}_{j+3}}}=\frac{z_{e-1}^{s_j}}{z_e^{\widehat{s}_j}}$$
for  $3\leq j\leq e-2$. It remains to determine $z_2$. From the second relation in Proposition \ref{reduced-ideal},
$$
z_1z_e=z_2\left(\prod_{j=3}^{e-2}z_j^{a_j-2}\right)z_{e-1}^{a_{e-1}-1}.$$
Substituting the expressions above gives
\begin{align*}
z_2&=z_1z_e^{1+\sum_{j=3}^{e-2}(a_j-2)\widehat{s}_j}z_{e-1}^{-(a_{e-1}-1+\sum_{j=3}^{e-2}(a_j-2)s_j)}
\end{align*}
By definition, we have $1+\sum_{j=3}^{e-2}(a_j-2)\widehat{s}_j=\beta $ and, since $s_{e-1}=1$, we have
$$
a_{e-1}-1+\sum_{j=3}^{e-2}(a_j-2)s_j=1+\sum_{j=3}^{e-1}(a_j-2)s_j=\alpha $$
Hence we obtain $\displaystyle z_2=\frac{z_1z_e^\beta}{z_{e-1}^\alpha}$. \end{proof}

\noindent Note that, for $e=4$, we set $s_3=1$, $\widehat s_3=0$, $\alpha=a_3-1$, and $\beta=1$. We can now eliminate the intermediate generators.

\begin{thm}\label{projected-hypersurface-D} Let $x=z_1$, $y=z_{e-1}$ and $z=z_e$. Then the projection onto these
three coordinates has the image 
$\overline X_{n,q}$ defined by the zero set $V(F)$ of
\begin{equation}\label{eq-projected-D}
F=y^{2\alpha+2s_3}+x^{a_2-1}y^{2\alpha}z^{2\widehat s_3}-xz^{2\beta+2\widehat s_3}.\end{equation}
In particular, the local ring of $\overline X_{n,q}$ at the origin is
$\overline R_{n,q}=\mathbb C[x,y,z]_{(x,y,z)}/(F)$ and the induced rational map
$\textnormal{Spec}(R_{n,q})\dashrightarrow \textnormal{Spec}\left(\overline R_{n,q}\right)$ is birational.
\end{thm}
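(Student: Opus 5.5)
We substitute the rational expressions of Lemma \ref{elimination-D} into the first relation of Proposition \ref{reduced-ideal}, $z_2^2=z_1(z_3^2+z_1^{a_2-1})$, which is the only relation involving $z_2$ not already used in the elimination. With $x=z_1$, $y=z_{e-1}$, $z=z_e$, we have $z_2=xz^\beta y^{-\alpha}$ and $z_3=y^{s_3}z^{-\widehat s_3}$. The relation becomes
$$\frac{x^2z^{2\beta}}{y^{2\alpha}}=x\Bigl(\frac{y^{2s_3}}{z^{2\widehat s_3}}+x^{a_2-1}\Bigr).$$
Since $x\neq 0$ in the domain $R_{n,q}$, we cancel one factor of $x$ and then multiply by $y^{2\alpha}z^{2\widehat s_3}$ to clear denominators. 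This gives
$$xz^{2\beta+2\widehat s_3}=y^{2\alpha+2s_3}+x^{a_2-1}y^{2\alpha}z^{2\widehat s_3},$$
that is, $F(z_1,z_{e-1},z_e)=0$ in $R_{n,q}$. Before this step I check that the exponents are nonnegative, namely $s_j,\widehat s_j\ge 0$ (and $\alpha,\beta\geq1$). This should follow from the recursion together with $a_i\ge 2$, in the same way as for the $i$- and $j$-series in the cyclic case. So the image of the projection lies in $V(F)$.

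For the reverse inclusion and birationality I argue by dimension and fraction fields. Lemma \ref{elimination-D} shows that every generator $z_j$ lies in $\mathbb C(z_1,z_{e-1},z_e)$. Hence $\operatorname{Frac}(R_{n,q})=\mathbb C(x,y,z)/\!\sim$ is generated by the images of $x,y,z$, so the projection is birational onto its image, and the image closure is an irreducible surface. Because the projection is finite (the coordinate ring is integral over $\mathbb C[z_1,z_{e-1},z_e]$, as all $z_j$ are monomials in $v_1,v_2,v_3$ and hence integral over $\mathbb C[x,y]$), the image is closed. It is therefore the irreducible surface $V(G)$ for the minimal polynomial $G$ of the image. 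To conclude that $V(G)=V(F)$, I show that $F$ is irreducible, or at least that its unique irreducible component of dimension two is the image.

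The step I expect to be the main obstacle is the irreducibility of $F$. My first attempt is to view $F$ as a polynomial in $x$. It is of the form $-xz^{N}+x^{a_2-1}y^{2\alpha}z^{2\widehat s_3}+y^{2\alpha+2s_3}$. In the generic case $a_2=2$ it is linear in $x$, namely $x(y^{2\alpha}z^{2\widehat s_3}-z^{N})+y^{2\alpha+2s_3}$. A polynomial linear in $x$ is irreducible as soon as its two coefficients are coprime, and here they are, since the only possible common factors are $y$ and $z$, neither of which divides both. For $a_2>2$ I would use a Newton polygon argument (Eisenstein-type, with respect to the prime $y$ or via weights) on the three-term polynomial. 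Concretely, I would show that its Newton polytope is a triangle whose edges have primitive lattice length conditions coprime to each other, forcing irreducibility. This uses $\gcd$ relations among $\alpha,\beta,s_3,\widehat s_3$ that come from the continued fraction, through the determinant identity $s_j\widehat s_{j+1}-s_{j+1}\widehat s_j=\pm1$, and these would need to be verified. Finally, localizing at the origin gives $\overline R_{n,q}$, and the birational statement follows from the fraction field computation above.
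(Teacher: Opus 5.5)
Your route is the paper's route. Substituting $z_2=xz^{\beta}y^{-\alpha}$ and $z_3=y^{s_3}z^{-\widehat s_3}$ into $z_2^2=z_1(z_3^2+z_1^{a_2-1})$ gives $F(z_1,z_{e-1},z_e)=0$. Lemma \ref{elimination-D} then gives $\operatorname{Frac}(R_{n,q})=\mathbb C(z_1,z_{e-1},z_e)$, so the kernel of $\mathbb C[x,y,z]\to R_{n,q}$ is a principal height-one prime $(G)$ with $G\mid F$. At this point the paper simply asserts that $F$ is ``the unique relation'' among the three coordinates. That is, it takes for granted exactly the irreducibility of $F$ that you correctly isolate as the crux.

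Your proof of irreducibility is complete only for $a_2=2$. For $a_2>2$ you leave a Newton-polytope plan that you expect to need gcd identities from the continued fraction, but no such identities are needed. The exponent vectors of the three monomials of $F$ are
\[
v_1=(0,2\alpha+2s_3,0),\qquad v_2=(a_2-1,2\alpha,2\widehat s_3),\qquad v_3=(1,0,2\beta+2\widehat s_3).
\]
They are not collinear: $v_2-v_1=t(v_3-v_1)$ would force $t=a_2-1$ and $2s_3=(a_2-1)(2\alpha+2s_3)$, which is impossible. Moreover, the edge $v_3-v_1$ is primitive, since its $x$-component is $1$.

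Now suppose $F=gh$. Ostrowski's theorem gives $\operatorname{Newt}(g)+\operatorname{Newt}(h)=\operatorname{Newt}(F)$. Every Minkowski summand of a triangle is a homothetic copy, so $\operatorname{Newt}(g)=\lambda\operatorname{Newt}(F)+a$ with $\lambda\in[0,1]$. Integrality of its vertices forces $\lambda(v_3-v_1)\in\mathbb Z^3$, hence $\lambda\in\{0,1\}$, and one factor is a monomial. Since $F$ contains $y^{2\alpha+2s_3}$ and $xz^{2\beta+2\widehat s_3}$, it is divisible by none of $x,y,z$, so that monomial factor is a constant. This proves irreducibility for all $a_2\geq 2$ at once, so $F=cG$ and the kernel is $(F)$.

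Your finiteness claim is true, but the reason you give does not prove it. Being monomials in $v_1,v_2,v_3$ says nothing about integrality over $\mathbb C[z_1,z_{e-1},z_e]$. What is needed is that $z_1=v_1^{2(n-q)}$ and $z_e$ are homogeneous with only the origin as common zero in $\mathbb C^2$. This holds: on each coordinate axis, $v_2$ and $v_3$ are, up to sign, the $2q$-th power of the remaining coordinate. So $z_e=v_2^{c_e}v_3^{d_e}$ (recall $r_e=0$) vanishes on $\{v_1=0\}$ only at the origin. Hence the polynomial ring of $\mathbb C^2$, and with it $R_{n,q}$, is finite over $\mathbb C[z_1,z_e]\subseteq\mathbb C[z_1,z_{e-1},z_e]$.

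With this correction your argument shows that the image itself equals $V(F)$, not only its closure, which is all the paper's proof yields. It also verifies the finiteness hypothesis of Corollary \ref{normalization-projected-D}.
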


\begin{proof} By Lemma \ref{elimination-D}, we have
$$z_2=\frac{x\,y^\alpha}{z^\beta},\qquad z_3=\frac{y^{s_3}}{z^{\widehat s_3}}.$$
Substituting these expressions into the relation
$z_2^2=z_1\bigl(z_3^2+z_1^{a_2-1}\bigr)$ gives
$$y^{2\alpha+2s_3}+x^{a_2-1}y^{2\alpha}z^{2\widehat s_3}-xz^{2\beta+2\widehat s_3}=0.$$
Hence the image of the projection is contained in $V(F)$ of $F$. Let
$\varphi\colon \mathbb C[x,y,z]\longrightarrow R_{n,q}$ be the homomorphism induced by
$$
x\longmapsto z_1,\qquad y\longmapsto z_{e-1},\qquad z\longmapsto z_e$$
Since $R_{n,q}$ is a domain, $\ker(\varphi)$ is a prime ideal. Moreover,
by Lemma \ref{elimination-D}, every generator $z_2,\ldots,z_{e-2}$ of $R_{n,q}$ is a rational function of
$z_1,z_{e-1},z_e$. Hence
$$
\operatorname{Frac}(R_{n,q})=\mathbb C(z_1,z_{e-1},z_e)=\operatorname{Frac}\bigl(\mathbb C[x,y,z]/\ker(\varphi)\bigr).
$$
Since $\textnormal{dim} R_{n,q}=2$, it follows that $\textnormal{dim} \mathbb C[x,y,z]/ \ker(\varphi)=2$. Thus $\ker(\varphi)$ is a height-one prime ideal of the UFD $\mathbb C[x,y,z]$, and hence it is principal, say $\ker(\varphi)=(G)$ for some irreducible polynomial $G$.

\noindent Since $F\in \ker(\varphi)$, the polynomial $G$ divides $F$. On the other hand, the
equation obtained above is the unique relation among the three projected coordinates up to multiplication by a unit, and therefore $F$ is associated to $G$. Hence $\ker(\varphi)=(F)$. It follows that the image closure of the projection is $V(F)$. Since the remaining generators are rational functions of $x,y,z$, the induced map is birational. \end{proof}

\begin{cor}\label{normalization-projected-D} Assume that the inclusion
$\overline R_{n,q}\hookrightarrow R_{n,q}$ is finite. Then $R_{n,q}$ is a normalization of $\overline R_{n,q}$.
\end{cor}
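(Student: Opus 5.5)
We argue directly from the definition of normalization: $R_{n,q}$ is the normalization of $\overline R_{n,q}$ once we know (a) $R_{n,q}$ is a normal domain, (b) $\overline R_{n,q}\hookrightarrow R_{n,q}$ is injective and birational, so both rings have the same fraction field, and (c) $R_{n,q}$ is integral over $\overline R_{n,q}$. Then $R_{n,q}$ lies between $\overline R_{n,q}$ and its integral closure in the common fraction field. Being integrally closed, $R_{n,q}$ coincides with that integral closure.

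For (a), we recall from the preliminaries that an invariant ring $\mathbb C[x,y]^G$ of a finite group is a normal domain. Normality is preserved under localization, so it also holds for the local ring at the origin that we actually compare with $\overline R_{n,q}=\mathbb C[x,y,z]_{(x,y,z)}/(F)$. We will therefore silently replace $R_{n,q}$ by its localization at the maximal ideal $(z_1,\dots,z_e)$. The map $x\mapsto z_1$, $y\mapsto z_{e-1}$, $z\mapsto z_e$ sends $(x,y,z)$ into this maximal ideal, so it extends to the local rings.

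For (b), we use Theorem \ref{projected-hypersurface-D}. The kernel of $\varphi\colon\mathbb C[x,y,z]\to R_{n,q}$ is $(F)$, so $\overline R_{n,q}$ injects into $R_{n,q}$. Lemma \ref{elimination-D} writes $z_2,\dots,z_{e-2}$ as fractions in $z_1,z_{e-1},z_e$, so $\operatorname{Frac}(R_{n,q})=\operatorname{Frac}(\overline R_{n,q})$.

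For (c), the hypothesis says $R_{n,q}$ is a finite $\overline R_{n,q}$-module, and a finite extension is integral. In fact finiteness probably holds unconditionally, since $\mathbb C[x,y]$ should already be finite over $\mathbb C[z_1,z_e]$: $z_1=(xy)^{2(n-q)}$ and $z_e$ is a form mixing $x^{2q},y^{2q}$, and their common zero locus is just the origin. Since the statement assumes finiteness, this is not needed.

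The main obstacle is bookkeeping rather than mathematics. We must check that localization commutes with the identification of fraction fields and with integrality, which holds because localization preserves integral closure. We must also check that $\overline R_{n,q}$ is a domain, which needs $F$ irreducible; this was asserted in Theorem \ref{projected-hypersurface-D} via $\ker\varphi=(F)$ and is simply quoted here.
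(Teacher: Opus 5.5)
Your proof is correct and is essentially the paper's argument. Theorem~\ref{projected-hypersurface-D} gives a common fraction field, the invariant ring is normal, finiteness gives integrality, and a normal ring that is integral and birational over $\overline R_{n,q}$ must equal its integral closure. Your extras are the passage to local rings at the origin and the observation that finiteness is automatic, since $z_1=(xy)^{2(n-q)}$ and $z_e=v_2^{c_e}v_3^{d_e}$ vanish simultaneously only at the origin, so $R_{n,q}\subseteq\mathbb C[x,y]$ is finite over $\mathbb C[z_1,z_e]$; neither appears in the paper's proof, and the second shows the corollary's finiteness hypothesis is redundant.
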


\begin{proof} By Theorem \ref{projected-hypersurface-D}, the two rings have the same fraction field. Since $R_{n,q}$ is an invariant ring of a finite group acting on $\mathbb C[u,v]$, it is normal. Hence, if $\overline R_{n,q}\subset R_{n,q}$ is finite, then $R_{n,q}$ is an integral, birational, normal extension of $\overline R_{n,q}$. Hence, $R_{n,q}=\overline{\overline R_{n,q}}$ the integral closure of $\overline R_{n,q}$ in its fraction field. \end{proof}

\noindent Although the preceding construction gives an explicit hypersurface equation for every dihedral quotient singularity, the exponents
$\alpha$, $\beta$, $s_3$ and $\widehat{s}_3$ become increasingly complicated as the length of the continued fraction grows. Below, we give an example which covers a neat class of examples among $D_{n,q}$ singularities. 

\begin{ex}\label{ex-Dm} Consider the family of dihedral quotient singularities whose long branch
contains a unique vertex of weight $m>2$, namely with
$$[b]=[\underbrace{2,\ldots, 2}_{k}, m, \underbrace{2,\ldots,2}_{l}]$$
where $k,l\ge0$. 

This family is particularly important because the position of the unique
vertex of weight greater than $2$ determines the number of rank-two
special CM modules. By induction on $l$ and then on $m$, we obtain
\begin{align*}
\frac{n}{q}&=\frac{(k(m-2)+m-1)l+k(m-1)+m}{(k(m-2)+1)l+k(m-1)+1},\\
\frac{n}{n-q}&=\frac{(k(m-2)+m-1)l+k(m-1)+m}{(m-2)l+m-1}.
\end{align*}

\noindent The second fraction has the continued fraction expansion 
$$[a]=[k+2,\underbrace{2,\ldots, 2}_{m-3}, l+2].$$ 
Hence, we have $e=m+1$, $\alpha=l+2$, $\beta=1$, $s_3=(m-3)l+m-2$ and $\widehat s_3=m-3$. Therefore the projected hypersurface equation is
\begin{equation}\label{equ-Dm}
y^{(2m-4)l+2m-2}+x^{k+1}y^{2l+2}z^{2m-6}-xz^{2m-4}=0. \end{equation}

In particular, the parameter $m$ determines both the complexity of the projected hypersurface and the number of rank-two special
CM modules that appear on the long branch.
\end{ex}

\begin{rem}
The rational triple points $C_{m,n}=D_nA_m$ correspond to the case
$$[b]=[\underbrace{2,\ldots,2}_{n-2},3,\underbrace{2,\ldots,2}_{m}]$$
of Example \ref{ex-Dm}.  The hypersurface equation (\ref{equ-Dm}) takes the form 
$y^{2m+4}+x^{n-1}y^{2m+2}-xz^2=0$ which coincides with the equation obtained in \cite{MAG} by a different approach.
\end{rem}

\section{Rank-one modules and $2\times2$-matrix factorizations for $\mathbb{D}_{n,q}$ singularities} We first construct three families of $2\times2$ matrix factorizations of the projected hypersurface equation.

\begin{prop}\label{mf-Dnq}
Let us consider 
$$F=y^{2\alpha+2s_3}+x^{a_2-1}y^{2\alpha}z^{2\widehat s_3}-xz^{2\beta+2\widehat s_3}
$$

\begin{enumerate}

\item[(i)] For $0\le i\le2\alpha-1$ and
$0\le j\le2\beta+2\widehat s_3-1$, define
$$
A_{i,j}^{(1)}=\begin{pmatrix}
y^{2\alpha-i} & xz^j\\
z^{2\beta+2\widehat s_3-j}&y^i\bigl(
y^{2s_3}+x^{a_2-1}z^{2\widehat s_3}\bigr)
\end{pmatrix}
$$
and
$$
B_{i,j}^{(1)}=\begin{pmatrix}
y^i\bigl(y^{2s_3}+x^{a_2-1}z^{2\widehat s_3}\bigr)&-xz^j\\
-z^{2\beta+2\widehat s_3-j}&y^{2\alpha-i}
\end{pmatrix}
$$

\item[(ii)] For $1\le j\le2\alpha+2s_3-1$, define
$$
A_j^{(2)}=\begin{pmatrix}
x & y^j\\
-y^{2\alpha+2s_3-j}&z^{2\widehat s_3}\bigl(x^{a_2-2}y^{2\alpha}-z^{2\beta}\bigr)
\end{pmatrix}
$$
and
$$
B_j^{(2)}=\begin{pmatrix}
z^{2\widehat s_3}\bigl(x^{a_2-2}y^{2\alpha}-z^{2\beta}\bigr)&-y^j\\
y^{2\alpha+2s_3-j}&x\end{pmatrix}
$$

\item[(iii)] For $0\le i\le2\widehat s_3-1$ and $1\le j\le2\alpha+2s_3-1$, define
$$
A_{i,j}^{(3)}=\begin{pmatrix}
z^{2\widehat s_3-i} & y^j\\
-y^{2\alpha+2s_3-j} & xz^i \bigl(x^{a_2-2}y^{2\alpha}-z^{2\beta}\bigr)
\end{pmatrix}
$$
and
$$
B_{i,j}^{(3)}=\begin{pmatrix}
xz^i\bigl(x^{a_2-2}y^{2\alpha}-z^{2\beta}\bigr)&-y^j\\
y^{2\alpha+2s_3-j}&z^{2\widehat s_3-i}
\end{pmatrix}$$

\end{enumerate}

\noindent Then we have $A_{i,j}^{(\ell)}B_{i,j}^{(\ell)}=B_{i,j}^{(\ell)}A_{i,j}^{(\ell)}=F\textnormal{Id}_2$ with $\ell=1,3$ and $A_j^{(2)}B_j^{(2)}=B_j^{(2)}A_j^{(2)}=F\textnormal{Id}_2$. Hence each pair above determines a $2\times 2$ matrix factorization of
the projected hypersurface equation.
\end{prop}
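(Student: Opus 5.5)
All three families have the same shape, so I would first prove one general identity and then reduce each of (i)--(iii) to a single determinant check. For polynomials $a,b,c,d\in\mathbb C[x,y,z]$, set
$$
A=\begin{pmatrix} a & b\\ c & d\end{pmatrix},\qquad B=\begin{pmatrix} d & -b\\ -c & a\end{pmatrix}.
$$
Multiplying out gives $AB=BA=(ad-bc)\,\textnormal{Id}_2$. The off-diagonal entries cancel because the ring is commutative: $-ab+ba=0$ and $cd-dc=0$. So $B$ is the adjugate of $A$. In each of (i)--(iii), $B^{(\ell)}$ is exactly this adjugate of $A^{(\ell)}$, so it suffices to show $\det A^{(\ell)}=F$.

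Before computing, I would check that every entry is a genuine polynomial, i.e. that all exponents are nonnegative in the stated ranges.
\begin{itemize}
\item In (i), $0\le i\le 2\alpha-1$ and $0\le j\le 2\beta+2\widehat s_3-1$ make $y^{2\alpha-i}$, $y^i$, $z^j$ and $z^{2\beta+2\widehat s_3-j}$ polynomials.
\item In (ii) and (iii), the bound $1\le j\le 2\alpha+2s_3-1$ does the same for $y^j$ and $y^{2\alpha+2s_3-j}$.
\item In (iii), $0\le i\le 2\widehat s_3-1$ handles $z^i$ and $z^{2\widehat s_3-i}$.
\item In (ii) and (iii), the factor $x^{a_2-2}$ is a polynomial because $a_2\ge 2$ by (\ref{eq-n-nq}).
\end{itemize}

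Now the determinants.
\begin{itemize}
\item For (i), the product of the diagonal entries is $y^{2\alpha-i}\cdot y^i\bigl(y^{2s_3}+x^{a_2-1}z^{2\widehat s_3}\bigr)=y^{2\alpha+2s_3}+x^{a_2-1}y^{2\alpha}z^{2\widehat s_3}$. The product of the off-diagonal entries is $xz^j\cdot z^{2\beta+2\widehat s_3-j}=xz^{2\beta+2\widehat s_3}$. Subtracting gives $F$.
\item For (ii), the diagonal product is $x\,z^{2\widehat s_3}\bigl(x^{a_2-2}y^{2\alpha}-z^{2\beta}\bigr)=x^{a_2-1}y^{2\alpha}z^{2\widehat s_3}-xz^{2\beta+2\widehat s_3}$. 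The off-diagonal product is $-y^{2\alpha+2s_3}$. Subtracting again gives $F$.
\item For (iii), the diagonal product is $z^{2\widehat s_3-i}\cdot xz^i\bigl(x^{a_2-2}y^{2\alpha}-z^{2\beta}\bigr)$, which is the same polynomial as in (ii). The off-diagonal product is again $-y^{2\alpha+2s_3}$, so the determinant is $F$.
\end{itemize}

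There is no real obstacle here. The only points needing care are the exponent bookkeeping, namely the ranges of $i,j$ and the condition $a_2\ge 2$, and matching the sign conventions so that each $B^{(\ell)}$ is exactly the adjugate of $A^{(\ell)}$. I would also remark that the three families correspond to the three ways of factoring $F=PQ-RS$: first $P=y^{2\alpha}$, $Q=y^{2s_3}+x^{a_2-1}z^{2\widehat s_3}$, $R=x$, $S=z^{2\beta+2\widehat s_3}$; then grouping on $x$; then grouping on $z^{2\widehat s_3}$. In each case the monomial factors are split between the two diagonal entries and between the two off-diagonal entries.
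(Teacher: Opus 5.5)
Your proposal is correct and is essentially the paper's argument. The paper gives no separate proof and treats the identity as a direct multiplication, as in Proposition \ref{cyclic-mf}. It later uses exactly your adjugate form $B=\begin{pmatrix} d&-b\\ -c&a\end{pmatrix}$ in the proof of Proposition \ref{alternativeideals-D}, and your reduction to $\det A^{(\ell)}=F$ in each family, with the exponent bookkeeping, is that same computation.
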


\subsection{Fractional ideals and module comparisons} Throughout this subsection, we identify rank-one torsion-free $\overline R_{n,q}$-modules with fractional ideals in
$Q=Q(\overline R_{n,q})$. Since $\overline R_{n,q}$ is a domain, every such module embeds into $Q$, and two fractional ideals $I,J\subset Q$ are isomorphic as $\overline R_{n,q}$-modules if and only if $J=hI$ for some $h\in Q^*$. We shall therefore freely replace fractional
ideals by equivalent ones obtained by multiplication by a nonzero element of $Q$.

\noindent To compare the modules arising from the matrix factorizations with the special CM modules on the normalization, it will be useful to have different ideal presentations of the same module.

\begin{prop}\label{alternativeideals-D} The rank-one MCM modules associated with the three families in
Proposition \ref{mf-Dnq} admit the following ideal presentations.
\begin{align*}
M_{i,j}^{(1)}&\cong (y^{2\alpha-i},z^{2\beta+2\widehat s_3-j})\\
&\cong (xz^j, y^i(y^{2s_3}+x^{a_2-1}z^{2\widehat s_3})),\\
M_j^{(2)}&\cong (x,y^{2\alpha+2s_3-j})\\
&\cong (y^j, z^{2\widehat s_3}(x^{a_2-2}y^{2\alpha}-z^{2\beta})),\\
M_{i,j}^{(3)}&\cong (z^{2\widehat s_3-i},y^{2\alpha+2s_3-j})\\
&\cong (y^j, xz^i(x^{a_2-2}y^{2\alpha}-z^{2\beta})).
\end{align*}
\end{prop}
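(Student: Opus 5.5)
Throughout, $M^{(\ell)}$ denotes $\operatorname{coker}(A^{(\ell)})$ for the corresponding matrix in Proposition \ref{mf-Dnq}. The plan is to imitate the proof of Theorem \ref{cyclic-comparison}(i) for each family, and then obtain the second presentation of each module from the first by an explicit fractional-ideal equivalence.

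\textbf{First presentations.} Write each $A$ as $\begin{pmatrix} p & r\\ s & t\end{pmatrix}$ with $\det A = pt-rs = \pm F$. For family (1), take $p=y^{2\alpha-i}$, $s=z^{2\beta+2\widehat s_3-j}$. Consider the map $\overline R_{n,q}^2\to (p,s)$, $(a,b)\mapsto ap+bs$, up to the sign conventions needed so that both columns of $A$ (or of a matrix equivalent to $A$ by changing the sign of a row or column) are killed. This uses the identity $y^{2\alpha-i}\cdot y^i(y^{2s_3}+x^{a_2-1}z^{2\widehat s_3}) - z^{2\beta+2\widehat s_3-j}\cdot xz^j = F$.

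To show these two columns generate the kernel, lift a relation $\widetilde a p+\widetilde b s = cF$ to $\mathbb C[x,y,z]$. Subtract $c$ times the appropriate column to get a syzygy of $(p,s)$ in the polynomial ring. Since $p$ and $s$ are powers of distinct variables, they are coprime, so that syzygy is a multiple of the Koszul syzygy $(s,-p)$, which is again a column. Hence $A$ presents $(p,s)$.

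The same argument works for family (2) with the coprime pair $(x, y^{2\alpha+2s_3-j})$, and for family (3) with $(z^{2\widehat s_3-i}, y^{2\alpha+2s_3-j})$. In the borderline cases, such as $i=0$ or $j=0$ in (1), one entry may fail to be a pure power; there one checks coprimality directly, for instance $y^{2\alpha}$ versus $z^{2\beta+2\widehat s_3}$.

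\textbf{Second presentations.} Note that $\operatorname{coker}(A)$ is also presented by the transpose-dual row pair: the ideal $(r,t)$ is isomorphic to $(p,s)$, up to sign. In $Q$ we have the relation $pt=rs$ modulo $F$, which gives $t/s = r/p$, and so
$$
(r,t) = \frac{r}{p}\,(p,s).
$$
This holds provided $p$ and $r$ are nonzero in the domain $\overline R_{n,q}$, which is clear because $F$ is irreducible by Theorem \ref{projected-hypersurface-D} and does not divide a monomial.

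For family (1) this yields $(xz^j,\, y^i(y^{2s_3}+x^{a_2-1}z^{2\widehat s_3}))$. For families (2) and (3), the relation reads $x\cdot z^{2\widehat s_3}(x^{a_2-2}y^{2\alpha}-z^{2\beta}) = -y^{j}y^{2\alpha+2s_3-j}$ modulo $F$. One verifies this by expanding $F$ as $y^{2\alpha+2s_3}+xz^{2\widehat s_3}(x^{a_2-2}y^{2\alpha}-z^{2\beta})$; this requires $a_2\ge 2$, which holds by definition. That gives the stated second ideals.

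\textbf{Main obstacle.} The main difficulty is the sign and orientation bookkeeping. The stated matrices need a row or column sign change to match the ideal relations exactly, and one must make sure the correct pair (row versus column entries) is used. I would handle this by replacing $A$ with an equivalent matrix $\operatorname{diag}(\pm1,\pm1)A\operatorname{diag}(\pm1,\pm1)$, which has an isomorphic cokernel. The edge cases where an entry becomes a unit or non-monomial need care only for the coprimality step.
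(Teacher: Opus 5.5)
Your argument is correct in substance, but the orientation fix you propose does not work as stated. Write $A=\begin{pmatrix}p&r\\ s&t\end{pmatrix}$ with $\det A=pt-rs=F$. No sign change $\operatorname{diag}(\pm1,\pm1)\,A\,\operatorname{diag}(\pm1,\pm1)$ makes $(u,v)\mapsto up+vs$ vanish on the first column, since it sends that column to $\pm(p^2\pm s^2)\neq 0$ in $\overline R_{n,q}$. You must also swap the generators.

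The left annihilator of $A$ modulo $F$ is $(-s,p)$, the second row of $B=\operatorname{adj}(A)$. So the map should be $(u,v)\mapsto -su+pv$, as in the proof of Theorem \ref{cyclic-comparison}(i), where the $(2,1)$-entry is attached to the first basis vector. A lifted relation $-s\widetilde u+p\widetilde v=c(pt-rs)$ then gives $-s(\widetilde u-cr)+p(\widetilde v-ct)=0$. Coprimality of $p$ and $s$ forces $(\widetilde u-cr,\widetilde v-ct)=d\,(p,s)$, a multiple of the first column of $A$. Your Koszul syzygy $(s,-p)$ is not a column.

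With this correction your first step is sound. The borderline-case remark is unnecessary: in all three parameter ranges the first-column entries are positive powers of distinct variables. Your second step, $(r,t)=\frac{r}{p}(p,s)$ obtained from $pt=rs$ in $\overline R_{n,q}$ with $p,r$ nonzero monomials, is correct as written.

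The paper proceeds differently. It observes that both rows of $B$ annihilate $A$ modulo $F$. This gives surjections $\operatorname{coker}(A)\to(a,c)$ and $\operatorname{coker}(A)\to(b,d)$. It then uses that $\operatorname{coker}(A)$ is torsion-free of rank one, so the kernels, having rank zero, vanish.

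That argument handles both presentations symmetrically in one stroke and needs no coprimality. It therefore applies to any $2\times 2$ factorization of the irreducible $F$ with $\det A=F$, but it depends on the MCM/torsion-free input.

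Your route is more elementary and self-contained. You verify directly that $A$ is a presentation matrix of $(p,s)$, using only coprimality of the first-column entries, which holds for all three families. You then get the second presentation as an explicit equality of fractional ideals. That is exactly the form in which these presentations are manipulated later in the paper.
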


\begin{proof} Let $$
A=\begin{pmatrix}
a&b\\
c&d
\end{pmatrix}, \qquad B=\begin{pmatrix}
d&-b\\
-c&a
\end{pmatrix}
$$
be one of the matrix factorizations in Proposition \ref{mf-Dnq}. Over $\overline R_{n,q}$ we have $AB=BA=0$. Hence the second row of
$B$ induces a surjective homomorphism
$$\operatorname{coker}(A)\longrightarrow(a,c), \qquad \overline{(u,v)}\longmapsto -cu+av$$
Similarly, the first row induces a surjection
$\operatorname{coker}(A)\longrightarrow(b,d)$. All three modules are torsion-free of rank one. The kernels of these
maps therefore we have rank zero, and hence vanish since $\operatorname{coker}(A)$ is torsion-free. Thus
$\operatorname{coker}(A)\cong(a,c)\cong(b,d)$. Applying this observation to the three families in
Proposition \ref{mf-Dnq} gives the stated presentations. 
\end{proof}

\noindent The three families of matrix factorizations constructed above are not disjoint, since different parameter values may yield
isomorphic MCM modules. Before comparing these modules with Wunram's special CM modules, we first determine the isomorphisms among the corresponding ideal presentations. The following propositions describe these identifications and determine the parameter values that give rise to new isomorphism classes.

\begin{prop}\label{isomorphism12} For every $0\leq i\leq 2\alpha-1$, there is an isomorphism
$M_{i,0}^{(1)}\cong M_{2\alpha-i}^{(2)}$. Hence, the family
$$\{M_{i,0}^{(1)}\mid 0\leq i\leq 2\alpha-1\}
$$
does not give rise to any new isomorphism classes of MCM modules. More precisely, it corresponds bijectively to the family $\{M_j^{(2)}\mid 1\leq j\leq 2\alpha\}$ under the correspondence $j=2\alpha-i$.
\end{prop}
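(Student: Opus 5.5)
The plan is to compare the two modules through the second ideal presentation of Proposition \ref{alternativeideals-D}, rather than through the matrices. For $j=0$, the first family gives
$$M_{i,0}^{(1)}\cong \bigl(x,\; y^i(y^{2s_3}+x^{a_2-1}z^{2\widehat s_3})\bigr),$$
since $xz^0=x$. For the second family with $j=2\alpha-i$, the first presentation reads
$$M_{2\alpha-i}^{(2)}\cong\bigl(x,\;y^{2\alpha+2s_3-(2\alpha-i)}\bigr)=\bigl(x,\;y^{i+2s_3}\bigr).$$
So it suffices to show that these two ideals of $\overline R_{n,q}$ coincide. Then the modules are isomorphic, and in fact are literally equal as fractional ideals.

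For the ideal equality, expand the second generator of the first ideal:
$$y^i(y^{2s_3}+x^{a_2-1}z^{2\widehat s_3})=y^{i+2s_3}+x\cdot x^{a_2-2}y^iz^{2\widehat s_3}.$$
Since $a_2\geq 2$ (all coefficients of the dual continued fraction are $\geq 2$), the factor $x^{a_2-2}y^iz^{2\widehat s_3}$ is a genuine polynomial. Hence this generator differs from $y^{i+2s_3}$ by a multiple of $x$, and $(x,y^{i+2s_3}+xh)=(x,y^{i+2s_3})$. As a cross-check I would also exhibit the equivalence at the level of matrices. Subtracting $x^{a_2-2}y^iz^{2\widehat s_3}$ times the first column of $A_{i,0}^{(1)}$ from its second column gives an equivalent matrix factorization. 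Comparing it with $A^{(2)}_{2\alpha-i}$ after a row/column swap and sign changes should realize the isomorphism of cokernels via Corollary \ref{eisenbud}. This step is optional, since the ideal computation already suffices.

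Next I would check the index ranges for the bijection. As $i$ runs over $0\leq i\leq 2\alpha-1$, $j=2\alpha-i$ runs over $1\leq j\leq 2\alpha$. This lies inside the admissible range $1\leq j\leq 2\alpha+2s_3-1$ of family (ii) provided $s_3\geq 1$. That holds because $s_3$ is the exponent of $z_{e-1}$ in $z_3$ (Lemma \ref{elimination-D}); it is positive from the recursion, and equals $1$ in the convention for $e=4$. The map $i\mapsto 2\alpha-i$ is clearly a bijection between $\{0,\ldots,2\alpha-1\}$ and $\{1,\ldots,2\alpha\}$, which gives the claimed correspondence. Consequently the family $\{M^{(1)}_{i,0}\}$ contributes no new isomorphism classes.

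The only mildly delicate point is confirming $s_3\geq 1$ and $a_2\geq 2$ in all cases, including $e=4$ and $e\geq 5$. Otherwise the argument is a one-line ideal manipulation. If $s_3\geq1$ turned out to be problematic in some degenerate case, I would handle it by checking directly from the recursion for $s_j$ that all $s_j$ are positive for $3\leq j\leq e-1$.
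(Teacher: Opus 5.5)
Your argument is correct and is essentially the paper's proof: rewrite $M^{(1)}_{i,0}\cong\bigl(x,\,y^i(y^{2s_3}+x^{a_2-1}z^{2\widehat s_3})\bigr)$, absorb $x^{a_2-1}y^iz^{2\widehat s_3}$ into $(x)$ using $a_2\geq 2$, and compare with $M^{(2)}_{2\alpha-i}\cong(x,y^{i+2s_3})$. Your extra check that $s_3\geq 1$, so that $j\leq 2\alpha$ stays in the range of family (ii), is a point the paper leaves implicit. In your optional matrix cross-check, the move that works is a row operation on $A^{(1)}_{i,0}$ (second row minus $x^{a_2-2}y^iz^{2\widehat s_3}$ times the first), not a column operation.
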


\begin{proof}
By Proposition \ref{alternativeideals-D},
$$
M_{i,0}^{(1)}\cong (x, y^i (y^{2s_3}+x^{a_2-1}z^{2\widehat s_3})).
$$
Since $a_2\geq 2$, we have
$$x^{a_2-1}y^iz^{2\widehat s_3}=x(x^{a_2-2}y^iz^{2\widehat s_3})\in (x).$$
Therefore we have
$$M_{i,0}^{(1)}\cong (x, y^{i+2s_3}+x^{a_2-1}y^iz^{2\widehat s_3})\cong (x,y^{i+2s_3}).$$
On the other hand, Proposition \ref{alternativeideals-D} yields
$$M_{2\alpha-i}^{(2)}\cong (x, y^{2\alpha+2s_3-(2\alpha-i)}) = (x,y^{i+2s_3}).$$
Hence $M_{i,0}^{(1)}\cong M_{2\alpha-i}^{(2)}$. Finally,
$0\leq i\leq 2\alpha-1$ if and only if $1\leq 2\alpha-i\leq 2\alpha $. Therefore, the correspondence $j=2\alpha-i$ defines a bijection between the two families, showing that the modules $M_{i,0}^{(1)}$ do not give
rise to any additional isomorphism classes. \end{proof}

\begin{prop}\label{isomorphism13} Assume that $s_3\geq1$. For every $0\leq i\leq2\alpha-1$ and $2\beta\leq j\leq2\beta+2\widehat s_3-1$, 
there is an isomorphism $M_{i,j}^{(1)}\cong M_{j-2\beta,i+2s_3}^{(3)}$. Thus, the family
$$
\{M_{i,j}^{(1)} \mid 0\leq i\leq2\alpha-1,\ 2\beta\leq j\leq2\beta+2\widehat s_3-1\}$$
does not give rise to any additional isomorphism classes of maximal
CM modules. More precisely, it corresponds bijectively to the family
$$
\{M_{p,q}^{(3)}\mid 0\leq p\leq2\widehat s_3-1,\ 2s_3\leq q\leq2\alpha+2s_3-1\}
$$
under the correspondence $p=j-2\beta $ and $q=i+2s_3$.
\end{prop}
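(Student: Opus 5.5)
The plan is to compare the two families through the first ideal presentations in Proposition \ref{alternativeideals-D}, exactly as was done for Proposition \ref{isomorphism12}. Since isomorphism classes of rank-one torsion-free modules are detected by fractional ideals, it suffices to exhibit the same ideal for both modules. No equivalence by a nonzero element of $Q$ should even be needed here. By Proposition \ref{alternativeideals-D},
$$M_{i,j}^{(1)}\cong \bigl(y^{2\alpha-i},\,z^{2\beta+2\widehat s_3-j}\bigr),\qquad M_{p,q}^{(3)}\cong\bigl(z^{2\widehat s_3-p},\,y^{2\alpha+2s_3-q}\bigr).$$

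The key step is the substitution $p=j-2\beta$ and $q=i+2s_3$. This gives $2\widehat s_3-p=2\beta+2\widehat s_3-j$ and $2\alpha+2s_3-q=2\alpha-i$. Hence
$$M_{j-2\beta,\,i+2s_3}^{(3)}\cong\bigl(z^{2\beta+2\widehat s_3-j},\,y^{2\alpha-i}\bigr).$$
This is literally the same ideal of $\overline R_{n,q}$ as the one presenting $M_{i,j}^{(1)}$, with the generators listed in the other order. So the isomorphism $M_{i,j}^{(1)}\cong M^{(3)}_{j-2\beta,i+2s_3}$ follows at once.

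It then remains to check the parameter ranges, so that the target module is indeed one of the matrix factorizations defined in Proposition \ref{mf-Dnq}(iii). The condition $2\beta\le j\le 2\beta+2\widehat s_3-1$ is equivalent to $0\le p\le 2\widehat s_3-1$, which is precisely the admissible range of the first index in family (iii). The condition $0\le i\le 2\alpha-1$ is equivalent to $2s_3\le q\le 2\alpha+2s_3-1$. The hypothesis $s_3\ge1$ is used exactly here: it ensures $q\ge 2\ge 1$, so that $q$ lies in the admissible range $1\le q\le 2\alpha+2s_3-1$ of family (iii). Without this hypothesis the value $q=0$ would occur when $i=0$ and $s_3=0$, and that value falls outside family (iii).

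Finally, the map $(i,j)\mapsto(j-2\beta,\,i+2s_3)$ is a translation of the index set, hence injective. Its inverse $(p,q)\mapsto(q-2s_3,\,p+2\beta)$ maps the stated range of $(p,q)$ back onto the stated range of $(i,j)$. So the map is a bijection between the two indicated families, and the subfamily of type (1) contributes no new isomorphism classes. I do not expect a genuine obstacle. The only point requiring care is the bookkeeping of the ranges, in particular the role of $s_3\ge1$, together with the fact that Proposition \ref{alternativeideals-D} applies to every admissible index pair.
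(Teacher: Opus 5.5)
Your proposal is correct and follows the paper's own proof essentially step for step. You substitute $p=j-2\beta$ and $q=i+2s_3$ into the first ideal presentations of Proposition \ref{alternativeideals-D}, observe that the two ideals coincide up to the order of the generators, and check the index ranges, using $s_3\ge 1$ to guarantee $q\ge 1$ so that $(p,q)$ is admissible for the third family.
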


\begin{proof} Put $p=j-2\beta $ and $q=i+2s_3$. Since $2\beta\leq j\leq 2\beta+2\widehat s_3-1$ we obtain
$0\leq p\leq2\widehat s_3-1$. Similarly, $0\leq i\leq 2\alpha-1$ implies that $2s_3\leq q\leq 2\alpha+2s_3-1$. Since $s_3\geq1$, we have $q\geq1$, and therefore $(p,q)$ belongs to the parameter range of the third family. By Proposition \ref{alternativeideals-D},
$$
M_{i,j}^{(1)}\cong (y^{2\alpha-i},z^{2\beta+2\widehat s_3-j}).$$
On the other hand,
$$
M_{p,q}^{(3)}\cong (z^{2\widehat s_3-p}, y^{2\alpha+2s_3-q}).$$
Substituting $p=j-2\beta $ and $q=i+2s_3$ gives
$$
M_{p,q}^{(3)}\cong (z^{2\widehat s_3-(j-2\beta)}, y^{2\alpha+2s_3-(i+2s_3)})=(z^{2\beta+2\widehat s_3-j}, y^{2\alpha-i}).
$$
Since the order of the generators of an ideal is irrelevant, 
$$
(z^{2\beta+2\widehat s_3-j}, y^{2\alpha-i})=(y^{2\alpha-i}, z^{2\beta+2\widehat s_3-j}).$$
Therefore,
$$
M_{i,j}^{(1)}\cong M_{j-2\beta,i+2s_3}^{(3)}$$
The correspondence $(p,q)=(j-2\beta, i+2s_3)$ is clearly bijective between the two parameter sets. \end{proof}

\begin{rem}\label{family23-D} Propositions \ref{isomorphism12} and \ref{isomorphism13} show that
certain members of the first family are already represented in the second and third families. Thus, these subfamilies do not
contribute any new isomorphism classes of rank-one MCM modules. On the other hand, the same argument does not lead to a corresponding identification between the second and third families. Indeed, by
Proposition \ref{alternativeideals-D},
$$
M_j^{(2)}\cong (x,y^{2\alpha+2s_3-j})\cong (y^j, z^{2\widehat s_3}(x^{a_2-2}y^{2\alpha}-z^{2\beta}),)$$
$$
M_{i,k}^{(3)}\cong (z^{2\widehat s_3-i},y^{2\alpha+2s_3-k})\cong (y^k, xz^i(x^{a_2-2}y^{2\alpha}-z^{2\beta})).$$
Unlike the previous cases, these ideal presentations do not immediately suggest a parameter correspondence that would identify the two modules. Therefore, the preceding propositions do not rule out the possibility of
additional isomorphisms, either within a single family or between the
second and third families.
\end{rem}

\noindent We now compare the rank-one MCM modules arising from the matrix factorizations of the projected hypersurface with Wunram's special CM modules over the normalization $R_{n,q}$.

\begin{thm} \cite{Wunram} Let $\pi\colon\widetilde X\longrightarrow \operatorname{Spec}(R_{n,q})$
be the minimal resolution. Then there is a one-to-one correspondence
between the irreducible components $E_1,\ldots, E_r$ of the exceptional divisor and the isomorphism classes of non-free
indecomposable special CM $R_{n,q}$-modules. Moreover, if $M_i$ is the special Cohen--Macaulay module corresponding to $E_i$ and $\widetilde M_i$ denotes the associated full sheaf, then
$$c_1(\widetilde M_i)\cdot E_j=\delta_{ij}$$
Furthermore, the rank of $M_i$ is equal to the coefficient of $E_i$ in the fundamental cycle.
\end{thm}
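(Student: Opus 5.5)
The plan is to work entirely on the minimal resolution $\pi\colon\widetilde X\to\operatorname{Spec}(R_{n,q})$ with \emph{full sheaves}. A full sheaf is $\widetilde M=\pi^*M/\mathrm{torsion}$ for a reflexive module $M$; equivalently, it is a locally free sheaf $\mathcal M$ that is generated by global sections, satisfies $H^1(\widetilde X,\mathcal M^\vee)\to H^1_E(\widetilde X,\mathcal M^\vee)$ injective, and has $\pi_*\mathcal M=M$. Since quotient singularities are rational, I will use Esnault's results in the following form.
\begin{itemize}
\item Full sheaves correspond bijectively to reflexive (= MCM) $R_{n,q}$-modules.
\item The map $M\mapsto\widetilde M$ respects direct sums.
\end{itemize}
By Definition \ref{def-spec}, $M$ is special exactly when $H^1(\widetilde X,\widetilde M^\vee)=0$. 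The whole statement therefore becomes a classification of globally generated bundles $\mathcal M$ with $H^1(\mathcal M^\vee)=0$ and no trivial summand.

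\textbf{Step 1: the first Chern class.} Write $\operatorname{rank}\widetilde M=r$. Choose $r-1$ generic global sections. Because $\widetilde M$ is globally generated, standard dimension counting gives an exact sequence
$$0\to\mathcal O^{r-1}\to\widetilde M\to\mathcal L\to0,$$
with $\mathcal L=\det\widetilde M$ a globally generated line bundle. Then $c_1(\widetilde M)=c_1(\mathcal L)$ is represented by a divisor $D$ made of curvettes transversal to the $E_j$. Hence $c_1(\widetilde M)\cdot E_j\ge0$ for all $j$, and $c_1=0$ exactly when $\widetilde M$ is trivial.

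\textbf{Step 2: existence.} For each $i$, take a curvette $D_i$ meeting $E_i$ transversally once and disjoint from the other $E_j$. Let $\mathcal L_i=\mathcal O(D_i)$, so that $c_1(\mathcal L_i)\cdot E_j=\delta_{ij}$. Set $h_i=\dim H^1(\mathcal L_i^\vee)$ and form the universal extension
$$0\to\mathcal O^{h_i}\to\mathcal M_i\to\mathcal L_i\to0$$
classified by a basis of $\operatorname{Ext}^1(\mathcal L_i,\mathcal O)=H^1(\mathcal L_i^\vee)$. Dualising and using $H^1(\mathcal O)=0$ (rationality), the connecting map is an isomorphism, so $H^1(\mathcal M_i^\vee)=0$. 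Global generation of $\mathcal M_i$ follows from $H^1(\mathcal O)=0$ and global generation of $\mathcal L_i$. I then check the full-sheaf conditions, so $M_i=\pi_*\mathcal M_i$ is special with $c_1\cdot E_j=\delta_{ij}$. Indecomposability follows because $c_1$ is additive and a summand with $c_1=0$ would be trivial, contradicting universality.

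\textbf{Step 3: uniqueness, and indecomposable $\Rightarrow$ a single curve.} This is the step I expect to be the main obstacle. Let $M$ be special and take the sequence from Step 1. The condition $H^1(\widetilde M^\vee)=0$ forces the extension class to span $H^1(\mathcal L^\vee)$, so $\widetilde M$ is (up to trivial summands) the universal extension of $\mathcal L$. Hence $\widetilde M$ is determined by $c_1$. If $c_1(\mathcal L)=\sum m_jE_j^*$, where $E_j^*$ is the dual basis, then $\bigoplus_j\mathcal M_j^{\oplus m_j}$ is also special and has the same $c_1$. Uniqueness then gives $\widetilde M\cong\bigoplus_j\mathcal M_j^{\oplus m_j}\oplus\mathcal O^s$. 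So an indecomposable non-free special module has $c_1=E_i^*$ for a single $i$, and the assignment $M\mapsto i$ is a bijection. The delicate part is that uniqueness of the universal extension must be matched with the fact that a different choice of sections may change the trivial part; I would handle this with Krull--Schmidt for vector bundles on the (formal neighbourhood of the) exceptional set.

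\textbf{Step 4: rank.} It remains to compute $\operatorname{rank}\mathcal M_i=1+h^1(\mathcal O(-D_i))$. Consider the filtration of $\mathcal O(-D_i)$ along a computation sequence for the fundamental cycle $Z$, together with the vanishing $H^1(\mathcal O(-Z))=0$ valid for rational singularities. By Riemann--Roch on the cycles $Z_k$, each step in which $E_i$ is added contributes exactly one to $h^1$, because $D_i\cdot E_i=1$. Steps adding other curves contribute nothing. Summing gives $h^1(\mathcal O(-D_i))=z_i-1$, where $z_i$ is the coefficient of $E_i$ in $Z$, so $\operatorname{rank}M_i=z_i$. The generator count $\mu=2\operatorname{rank}$ of Theorem \ref{thm-wunram} follows in the same way from $\mu(M_i)=\operatorname{rank}+c_1\cdot Z$.
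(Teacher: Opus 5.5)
The paper does not prove this statement; it quotes it from Wunram, so your proposal has to stand on its own. Your overall architecture is the standard one: full sheaves, extending the curvette bundle $\mathcal L_i=\mathcal O(D_i)$ by a trivial bundle, and uniqueness up to trivial summands plus Krull--Schmidt. But Steps 2 and 4 rest on a genuine error: you treat $\operatorname{Ext}^1(\mathcal L_i,\mathcal O)=H^1(\widetilde X,\mathcal L_i^\vee)$ as a vector space.

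It is a finite-length $R_{n,q}$-module, and $\mathfrak m$ need not act trivially on it. From $0\to\mathcal O(-D_i)\to\mathcal O_{\widetilde X}\to\mathcal O_{D_i}\to 0$ and $H^1(\mathcal O_{\widetilde X})=0$ you get $H^1(\mathcal O(-D_i))\cong\overline{\mathcal O}_{C_i}/\mathcal O_{C_i}$ with $C_i=\pi(D_i)$. Hence $h^1(\mathcal O(-D_i))=\delta(C_i)$. The vanishing $H^1(\mathcal M^\vee)=0$ only requires the connecting map $R^{h}\to H^1(\mathcal L_i^\vee)$ to be surjective; it is not injective for $h>0$. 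In other words, the classes need only generate $H^1$ as an $R$-module. If your $\mathbb C$-basis is not a minimal $R$-generating set, a change of basis in $\operatorname{GL}_h(R)=\operatorname{Aut}(\mathcal O^h)$ makes one class zero, and a copy of $\mathcal O$ splits off. So your indecomposability argument (``contradicting universality'') fails. Moreover, $1+h^1(\mathcal O(-D_i))$ is the rank of $\mathcal M_i\oplus\mathcal O^{s}$, not of $\mathcal M_i$.

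The formula $h^1(\mathcal O(-D_i))=z_i-1$ in Step 4 is also false, already inside the dihedral family. Take the Kleinian $D_6$ singularity, i.e. $[b]=[2,2,2,2]$. The image of a generic line through $0\in\mathbb C^2$, whose stabilizer in $G$ is $\{\pm 1\}$, is a curvette on the trivalent curve, which has coefficient $2$ in $Z$. The invariants have degrees $4,8,10$, so in the parameter $t=s^2$ the semigroup is $\langle 2,5\rangle$, and $h^1=\delta=2\neq 1$. More generally, $\delta(C_i)=z_i-1$ would force $\operatorname{edim}C_i=z_i$, which is impossible for $z_i\ge 4$ on $E_7,E_8\subset\mathbb C^3$.

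Your filtration argument breaks in two places. It terminates at $H^1(\mathcal O(-D_i-Z))$, not at $H^1(\mathcal O(-Z))$, and that group counts the gaps of the semigroup of $C_i$ above $z_i$, so it need not vanish. Also, the graded pieces only give an upper bound.

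The repair is to use a minimal system of $R$-generators, with $r_i=\mu_R(H^1(\mathcal O(-D_i)))$.
\begin{itemize}
\item The resulting sheaf has no $\mathcal O$-summand. Suppose $\mathcal M_i=\mathcal M'\oplus\mathcal O$. Then $r_i-1$ generic sections of $\mathcal M'$ give $0\to\mathcal O^{r_i-1}\to\mathcal M'\to\mathcal L_i\to 0$; here $\det\mathcal M'\cong\mathcal L_i$ because $\operatorname{Pic}\widetilde X\cong H^2(\widetilde X,\mathbb Z)$. Speciality of $\mathcal M'$ would then let $r_i-1$ elements generate $H^1(\mathcal L_i^\vee)$, a contradiction.
\item By your $c_1$ argument, $\mathcal M_i$ is then indecomposable.
\item For the rank, Artin's $\mathfrak m\mathcal O_{\widetilde X}=\mathcal O(-Z)$ gives $\mathfrak m\overline{\mathcal O}_{C_i}=t^{z_i}\mathbb C\{t\}$, since $Z\cdot D_i=z_i$. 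Also $\mathcal O_{C_i}\subseteq\mathbb C+t^{z_i}\mathbb C\{t\}$. Therefore
\[ \operatorname{rank}\mathcal M_i=1+\dim_{\mathbb C}\overline{\mathcal O}_{C_i}/(\mathcal O_{C_i}+\mathfrak m\overline{\mathcal O}_{C_i})=z_i. \]
\end{itemize}

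Separately, Esnault's fullness criterion is injectivity of $H^1_E(\widetilde X,\mathcal M^\vee)\to H^1(\widetilde X,\mathcal M^\vee)$; you wrote the map in the reverse direction. So once $H^1(\mathcal M^\vee)=0$, you still need $H^1_E(\mathcal M^\vee)=0$. This is dual to $H^1(\mathcal M\otimes\omega_{\widetilde X})=0$ and follows from Grauert--Riemenschneider-type vanishing.
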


\noindent Thus the rank-one special CM modules correspond precisely to the exceptional curves having coefficient $1$ in the fundamental cycle. We determine which of these modules arise as reflexive normalizations of the rank-one modules constructed above. For this purpose, we use Wemyss's explicit description of the rank-one special CM modules as two-generated fractional ideals of $R_{n,q}$. Assume that 
$ \displaystyle \frac{n}{q}=[b_1,\ldots,b_r] $ and let $\nu$ be the largest integer such that $b_1=\cdots=b_\nu=2$, with $\nu=0$ if $b_1\geq 3$. The associated $i$-series is defined by $$ i_0=n,\qquad i_1=q,\qquad i_t=b_{t-1}i_{t-1}-i_{t-2}, \quad 2\leq t\leq r. $$

\begin{rem}
Wunram's geometric description can also be expressed in terms of the
dual cycles of the exceptional divisor. Let
$I=((E_i\cdot E_j))$ be the intersection matrix of the minimal
resolution and define the dual cycles by
$$
E_i^*\cdot E_j=-\delta_{ij}.
$$
By \cite{RomanoNemethi}, we get $E_i^*=\sum_j(-I^{-1})_{ij}E_j$. Thus the dual cycles corresponding to the rank-one special CM modules
can be read directly from the rows of $-I^{-1}$. 
\end{rem}

\noindent Following \cite{WemyssDII}, let
$$l_t=\begin{cases}2, & t=\nu+1,\\
2+\displaystyle\sum_{p=\nu+1}^{t-1}(b_p-2), & \nu+2\leq t\leq r
\end{cases}
$$
Thus, $l_t$ records the position in the dual continued fraction of the coefficient corresponding to the exceptional curve $E_t$ on the long branch of the resolution graph. 

\begin{thm} \cite{WemyssDII} \label{wemysd2}
Define 
$$
w_1=uv,\quad w_2=(u^q+v^q)(u^q+(-1)^{a_2}v^q),\quad w_3=(u^q-v^q)(u^q+(-1)^{a_2}v^q)$$
For $\nu+1\leq t\leq r$ with $r\geq 1$, define
$$
\Delta_t=1+\sum_{j=\nu+1}^{t-1}c_{l_j},\qquad \Gamma_t=\sum_{j=\nu+1}^{t-1}d_{l_j}$$
where an empty sum is understood to be zero. Then the non-free indecomposable rank-one special CM
$R_{n,q}$-modules are $W_+$, $W_-$, $W_{i_{\nu+1}}$, $\ldots $, $W_{i_r}$ where
$$W_+=R_{n,q}(u^q+v^q)+R_{n,q}w_1^{n-q}(u^q-v^q),$$
$$W_-=R_{n,q}(u^q-v^q)+R_{n,q}w_1^{n-q}(u^q+v^q),$$
$$W_{i_t}=R_{n,q}w_1^{i_t}+R_{n,q}w_2^{\Delta_t}w_3^{\Gamma_t}, \qquad \nu+1\leq t\leq r.$$
Moreover, each of these modules is minimally generated by two elements,
and every non-free indecomposable rank-one special CM
$R_{n,q}$-module is isomorphic to one of them.
\end{thm}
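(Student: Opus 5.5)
The statement is Wemyss's classification (Theorem \ref{wemysd2}), so the plan is to rederive it from results already available. We would use Wunram's correspondence (Theorem \ref{thm-wunram}), the Iyama--Wemyss criterion $\operatorname{Ext}^1_{R_{n,q}}(M,R_{n,q})=0$, and the decomposition of $\mathbb C[u,v]$ into semi-invariant isotypic pieces under $\mathcal D_{n,q}$.

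\textbf{Step 1: identify the candidates.} Rank-one reflexive $R_{n,q}$-modules correspond to the one-dimensional characters $\chi$ of $\mathcal D_{n,q}$, via $M_\chi=\{h\in\mathbb C[u,v]\mid g\cdot h=\chi(g)h\}$. Wunram's theorem then tells us how many rank-one special modules to expect: one for each exceptional curve with coefficient $1$ in the fundamental cycle. These are the two $(-2)$-curves on the short branches and the curves $E_t$ with $t\geq \nu+1$ on the long branch. The two short-branch curves should correspond to $W_\pm$, which are built from the anti-invariant combinations $u^q\pm v^q$. The long-branch curves should correspond to $W_{i_t}$. To confirm this, we check that $w_1^{i_t}$ and $w_2^{\Delta_t}w_3^{\Gamma_t}$ transform under the same character. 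This reduces to a congruence on exponents, which follows from the recursions defining the $c$-, $d$- and $i$-series together with the fact that $l_t$ indexes the correct position in $[a]$ (Proposition \ref{dnq-gen}).

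\textbf{Step 2: show each listed module equals its semi-invariant module.} For $W_{i_t}$ we show that every semi-invariant of the relevant character is divisible, in the invariant sense, by either $w_1^{i_t}$ or $w_2^{\Delta_t}w_3^{\Gamma_t}$. We would do this by analysing monomials in $v_1,v_2,v_3$ along the lines of Riemenschneider's generator computation. The analogous statement for $W_\pm$ is easier, using the decomposition by the sign characters of the dihedral part.

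\textbf{Step 3: specialness and completeness.} Each module is MCM, being reflexive of depth $2$. For specialness we use Theorem \ref{thm-wunram}: a rank-one special module is generated by exactly $2$ elements. In the other direction, if a rank-one reflexive module is $2$-generated, its syzygy is again rank one, and we can compute $\operatorname{Ext}^1(M,R)=0$ via the dual presentation. Since the number of listed modules equals the number of coefficient-one curves, pairwise non-isomorphism (distinct characters) plus Wunram's bijection gives completeness.

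\textbf{Main obstacle.} The hardest part is Step 2 for $W_{i_t}$: proving that the two generators really generate the full semi-invariant module, which amounts to a minimal-generation count. We would attempt it by induction on $t$, using the recursion $i_t=b_{t-1}i_{t-1}-i_{t-2}$ and the update of $(\Delta_t,\Gamma_t)$ by $(c_{l_t},d_{l_t})$. If the induction does not close, the fallback is to compute $c_1(\widetilde M)\cdot E_j=\delta_{tj}$ directly on the resolution.
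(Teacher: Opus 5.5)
The paper does not prove this statement. It quotes it from Wemyss, so your outline has to stand on its own.

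\textbf{What works.} The outer frame is sound. Since $\mathcal D_{n,q}$ is small, the rank-one reflexive modules are exactly the semi-invariant modules $M_\chi$. In the non-Gorenstein case, Wunram's rank formula predicts $2+(r-\nu)$ non-free rank-one specials. Your converse in Step 3 is also correct, and easy to make precise. If $I=(a,b)$ is a divisorial fractional ideal, its syzygy is $\{(-bf,af)\mid f\in I^{-1}\}\cong I^{-1}$. Dualizing the presentation gives $R^2\to\operatorname{Hom}(I^{-1},R)=I$, $(r,s)\mapsto -rb+sa$, which is onto. Hence $\operatorname{Ext}^1_R(I,R)=0$ and $I$ is special. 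Given pairwise distinct nontrivial characters, Wunram's count would then give completeness.

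\textbf{The gap.} Step 2 carries the whole content of the theorem, and you only propose to attempt it.
\begin{enumerate}
\item[(1)] A common character for $w_1^{i_t}$ and $w_2^{\Delta_t}w_3^{\Gamma_t}$ gives only $J:=Rw_1^{i_t}+Rw_2^{\Delta_t}w_3^{\Gamma_t}\subseteq M_\chi$. Two-generated semi-invariant subideals of this kind exist in every $M_\chi$, special or not. In the cyclic case, $(x^t,y^s)\subseteq M_t$ whenever $qs\equiv t\pmod n$, yet $M_t$ is special only for $t\in\{i_1,\dots,i_{e-2}\}$.
\item[(2)] Without $J=M_\chi$ you cannot apply the Step 3 criterion to $J$. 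You also do not know that the characters you found are the special ones, so the count has no input.
\item[(3)] Your proposed mechanism needs repair. The criterion ``every semi-invariant is divisible by one of the two generators'' is a monomial argument. Here $M_\chi$ has no $R$-stable monomial basis in $u,v$, and products of $v_1,v_2,v_3$ are linearly dependent because $v_2^2-v_3^2=4(-1)^{a_2}v_1^{2q}$. You must first fix a normal form for a $\mathbb C$-basis of $M_\chi$, then show each basis element lies in $J$. This is exactly where the identities linking the $r$-, $c$- and $d$-series at the positions $l_j$ to the $i$-series must be proved, not merely invoked via Proposition~\ref{dnq-gen}.
\item[(4)] Your fallback of computing $c_1(\widetilde M)\cdot E_j$ would show that $M_\chi$ is special. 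You would still need to prove that the two listed elements minimally generate it.
\item[(5)] The character identities in Step 1 and the pairwise distinctness of the characters are only asserted, not checked.
\end{enumerate}
As it stands, this is a plan whose decisive step is missing.
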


\noindent We begin with the endpoint module $W_1$. The following proposition shows that $W_1$ is recovered as the reflexive
normalization of a distinguished member of the third matrix factorization family.

\begin{prop} \label{W1-MF0} The endpoint rank-one special CM module satisfies
$W_{i_r}=W_1\cong(Z_{e-1}, Z_e)\cong (z,y)$. Moreover,
$$\nu^*M_{2\widehat s_3-1,\,2\alpha+2s_3-1}^{(3)}\cong W_1.$$
\end{prop}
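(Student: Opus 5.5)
We split the statement into two parts. First we identify $W_1=W_{i_r}$ with the two-generated ideal $(Z_{e-1},Z_e)$ of $R_{n,q}$, that is, with $(y,z)$ in the projected coordinates. Then we show that this same ideal arises, after base change and reflexive hull, from the matrix factorization $(A^{(3)}_{2\widehat s_3-1,2\alpha+2s_3-1},B^{(3)}_{2\widehat s_3-1,2\alpha+2s_3-1})$.

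For the first part we use Theorem \ref{wemysd2} at $t=r$. By the $i$-series recursion the last term is $i_r=1$, so
$$W_{i_r}=R_{n,q}\,w_1+R_{n,q}\,w_2^{\Delta_r}w_3^{\Gamma_r}.$$
We would check that $\Delta_r$ and $\Gamma_r$ are the exponents $c_e,d_e$ of the last Riemenschneider generator, up to the shift by the factor $(u^q+(-1)^{a_2}v^q)$ built into $w_2$ and $w_3$. Concretely, we compare the sums $\sum c_{l_j}$ and $\sum d_{l_j}$ over the long branch with the recursions $c_j=a_{j-1}c_{j-1}-c_{j-2}$ and $d_j=a_{j-1}d_{j-1}-d_{j-2}$. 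Since $l_t$ records positions in $[a]$, the telescoping should give $c_e$ and $d_e$. Once that is done, multiplying both generators by a suitable power of $w_1$ and by an invariant factor identifies them with $z_e=v_1^{r_e}v_2^{c_e}v_3^{d_e}$ and $z_{e-1}=v_1^{r_{e-1}}v_2^{c_{e-1}}v_3^{d_{e-1}}$. This works because consecutive generators differ, in the relevant character, exactly by the step $w_1\leftrightarrow w_2^{\Delta}w_3^{\Gamma}$. This bookkeeping of exponents is the step I expect to be the main obstacle. If the direct comparison is messy, I would instead argue by characters. Both $(w_1,w_2^{\Delta_r}w_3^{\Gamma_r})$ and $(z_{e-1},z_e)$, after division by a common element of the fraction field, are two-generated fractional ideals of the same rank-one reflexive class. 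That class is determined by its first Chern class, which is $E_r^*$. So their reflexive hulls agree, and since $W_1$ is already reflexive and two-generated, the ideals are isomorphic.

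For the second part, Proposition \ref{alternativeideals-D} with $i=2\widehat s_3-1$ and $j=2\alpha+2s_3-1$ gives
$$M^{(3)}_{2\widehat s_3-1,\,2\alpha+2s_3-1}\cong (z^{2\widehat s_3-i},\,y^{2\alpha+2s_3-j})=(z,y).$$
Extending scalars along $\overline R_{n,q}\to R_{n,q}$ (with $x\mapsto z_1$, $y\mapsto z_{e-1}$, $z\mapsto z_e$), the image of $(z,y)\otimes R_{n,q}$ in the fraction field is the ideal $(Z_e,Z_{e-1})R_{n,q}$. Hence $\nu^*M^{(3)}$, meaning the reflexive hull of the extension, equals $(Z_{e-1},Z_e)^{\vee\vee}$.

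To conclude, we need $(Z_{e-1},Z_e)$ to be reflexive. This follows from the first part: the ideal is isomorphic to the MCM module $W_1$, and MCM modules over the normal ring $R_{n,q}$ are reflexive. Therefore
$$\nu^*M^{(3)}_{2\widehat s_3-1,2\alpha+2s_3-1}\cong (Z_{e-1},Z_e)\cong W_1.$$
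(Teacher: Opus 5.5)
\paragraph{The second assertion.} Your argument here is the paper's. Proposition~\ref{alternativeideals-D} with $i=2\widehat s_3-1$, $j=2\alpha+2s_3-1$ gives $(z,y)$, and extending to $R_{n,q}$ and taking the reflexive hull gives $(Z_{e-1},Z_e)^{\vee\vee}$. You are, if anything, more explicit than the paper about the torsion kernel and about reflexivity. You should also note that $i=2\widehat s_3-1\geq 0$ requires $\widehat s_3\geq 1$, which holds under the standing assumption $e\geq 5$.

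\paragraph{The first assertion.} The gap is here: you do not prove $W_1\cong(Z_{e-1},Z_e)$, and this carries the content. The argument needs the generator-ratio identity $w_2^{\Delta_r}w_3^{\Gamma_r}/w_1=Z_e/Z_{e-1}$. With it, $Z_{e-1}W_1=w_1(Z_{e-1},Z_e)$, and the isomorphism is multiplication by $Z_{e-1}/w_1$. The paper reads this identity off from $r_{e-1}=1$, $r_e=0$, $\Delta_r=c_e-c_{e-1}$ and $\Gamma_r=d_e-d_{e-1}$.

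What you propose to verify, $\Delta_r=c_e$ and $\Gamma_r=d_e$ ``up to a shift'', is not the relevant statement. A common multiplier carries one two-generated fractional ideal onto another when the generator ratios agree. The ratio $Z_e/Z_{e-1}$ involves the \emph{differences} of the exponents of the last two Riemenschneider generators, together with $r_e-r_{e-1}=-1$. With $\Delta_r=c_e$ and $\Gamma_r=d_e$, the ratio of the generators of $W_1$ would instead be (up to the $\eta$-factors) $Z_e/v_1$. But $v_1\neq Z_{e-1}=v_1v_2^{c_{e-1}}v_3^{d_{e-1}}$, since $d_{e-1}\geq 1$, so the rescaling you describe cannot work as stated. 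Your remark that consecutive generators differ by the step $w_1\leftrightarrow w_2^{\Delta}w_3^{\Gamma}$ is the right intuition. However, it conflicts with the target you wrote down, and it is never checked.

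\paragraph{The fallback.} This does not close the gap. Saying that $(Z_{e-1},Z_e)$ lies in the reflexive class with first Chern class $E_r^*$ is exactly the claim to be proved, so as written the argument is circular.

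A non-circular version would use that rank-one reflexive modules are classified by characters of the group. Since $r_{e-1}=1$, $r_e=0$, $c_e\geq c_{e-1}$ and $d_e\geq d_{e-1}$, cancelling $v_2^{c_{e-1}}v_3^{d_{e-1}}$ turns $(Z_{e-1},Z_e)$ into an ideal generated by $v_1$ and a semi-invariant coprime to it. Its hull is the module of semi-invariants of the character of $v_1$, and you would still have to match this character with that of $W_1$.

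Even then you only obtain $(Z_{e-1},Z_e)^{\vee\vee}\cong W_1$. That suffices for $\nu^*M^{(3)}_{2\widehat s_3-1,\,2\alpha+2s_3-1}\cong W_1$, but not for $W_1\cong(Z_{e-1},Z_e)$ itself. Equal reflexive hulls, together with $W_1$ being reflexive and two-generated, do not make $(Z_{e-1},Z_e)$ reflexive. For example, over $R=\mathbb C[x^2,xy,y^2]$ the module $Rx^3+Ry^3$ is two-generated with reflexive hull $Rx+Ry$. It is not reflexive, since $(Rx+Ry)/(Rx^3+Ry^3)$ is a nonzero module of finite length. So your last step, deducing reflexivity of $(Z_{e-1},Z_e)$ from the first part, only goes through once the exact generator-ratio identity is established.
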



\begin{proof} By Theorem \ref{wemysd2}, we have  $W_1=W_{i_r}=(w_1,w_2^{\Delta_r}w_3^{\Gamma_r})$. 
Since $l_r=e-1$, $i_r=1$ and $i_{r+1}=0$, we have $r_{e-1}=1$. Moreover, $r_e=0$, $c_e-c_{e-1}=\Delta_r $ and $d_e-d_{e-1}=\Gamma_r$. It follows that
$$
\frac{Z_e}{Z_{e-1}}=w_1^{-1}w_2^{\Delta_r}w_3^{\Gamma_r}$$
and hence
$$
w_2^{\Delta_r}w_3^{\Gamma_r}=w_1\frac{Z_e}{Z_{e-1}}.
$$
Multiplying $W_1$ by $Z_{e-1}$ gives  $Z_{e-1}W_1=w_1(Z_{e-1},Z_e)$. So, we get 
$W_1\cong(Z_{e-1}, Z_e)\cong (y, z)$. On the other hand, Proposition \ref{alternativeideals-D} gives
$$M_{i,j}^{(3)}\cong (z^{2\widehat s_3-i},y^{2\alpha+2s_3-j})$$
Taking $i=2\widehat s_3-1$ and $j=2\alpha+2s_3-1$, we obtain
$$M_{2\widehat s_3-1,2\alpha+2s_3-1}^{(3)}\cong(z,y)$$
Thus, the matrix factorization
$$(A_{2\widehat s_3-1,2\alpha+2s_3-1}^{(3)},B_{2\widehat s_3-1,2\alpha+2s_3-1}^{(3)})
$$
recovers $W_1$ after reflexive normalization. Consequently, we have
$$\nu^*M_{2\widehat s_3-1,2\alpha+2s_3-1}^{(3)}\cong W_1.$$
\end{proof}

\noindent We next identify the matrix factorization corresponding to the terminal special module $W_+$. We first express $W_+$ in terms of the invariants $v_1,v_2,v_3$.

\begin{lem}\label{Wplus-presentation} The special CM module
$$W_+=R_{n,q}(u^q+v^q)+R_{n,q}(uv)^{n-q}(u^q-v^q)
$$
admits the fractional-ideal presentation $W_+\cong(v_2,v_1^{n-q}v_3)$.
\end{lem}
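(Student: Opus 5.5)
The plan is to realize the isomorphism by multiplying $W_+$ by a single nonzero element of the fraction field of $\mathbb C[u,v]$, chosen so that the two generators of $W_+$ become $v_2$ and $v_1^{n-q}v_3$, possibly up to a correction term that lies in the submodule they generate. Here $v_1=uv$, $v_2=u^{2q}+(-1)^{a_2}v^{2q}$ and $v_3=u^{2q}+(-1)^{a_2-1}v^{2q}$ are read in the variables $u,v$, as in Theorem \ref{wemysd2}. The key algebraic fact is
$$(u^q+v^q)(u^q-v^q)=u^{2q}-v^{2q}.$$
Depending on the parity of $a_2$, this product equals $v_2$ (when $a_2$ is odd) or $v_3$ (when $a_2$ is even). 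So I would split into these two cases, which is exactly the information encoded by the factor $u^q+(-1)^{a_2}v^q$ appearing in $w_2,w_3$ of Theorem \ref{wemysd2}.

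\emph{Case $a_2$ odd.} Here $v_2=u^{2q}-v^{2q}$ and $v_3=u^{2q}+v^{2q}$. Multiplying by $h=u^q-v^q$ sends the first generator $u^q+v^q$ to $v_2$. The second generator goes to
$$v_1^{n-q}(u^q-v^q)^2=v_1^{n-q}\bigl(v_3-2v_1^q\bigr)=v_1^{n-q}v_3-2v_1^{n}.$$

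\emph{Case $a_2$ even.} Here $v_3=u^{2q}-v^{2q}$ and $v_2=u^{2q}+v^{2q}$. I would multiply instead by $h=u^q+v^q$. Then the second generator becomes exactly $v_1^{n-q}v_3$, while the first becomes $(u^q+v^q)^2=v_2+2v_1^q$.

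In both cases $hW_+$ is generated over $R_{n,q}$ by the two target generators, each shifted by a correction term, namely $-2v_1^n$ or $2v_1^q$. Once the corrections are disposed of, $hW_+=R_{n,q}v_2+R_{n,q}v_1^{n-q}v_3$. The isomorphism then follows from the principle in the fractional-ideal subsection, that multiplication by a nonzero element of the fraction field gives isomorphic modules.

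The main obstacle is eliminating these correction terms. I would first try to show that each correction lies in the $R_{n,q}$-span of the other generator together with the target generator, using the identity
$$v_2^2-v_3^2=\pm4v_1^{2q}$$
and the fact that $v_1^{2(n-q)}=z_1\in R_{n,q}$. If this direct approach fails, I would use an alternative that sidesteps the corrections. Both $W_+$ and $(v_2,v_1^{n-q}v_3)$ are rank-one reflexive modules of semi-invariants, so each is determined up to isomorphism by the character of $\mathcal D_{n,q}$ by which its generators transform (as in the cyclic case, Theorem \ref{cyclic-comparison}(ii)). So I would check that $hW_+$ and $(v_2,v_1^{n-q}v_3)$ carry the same character. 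For the target this means checking that $v_2$ and $v_1^{n-q}v_3$ transform alike, which follows from the form of $z_2=v_1^{r_2}v_2$ and $z_3=v_1^{r_3}v_3$ with $r_2-r_3=n-q$. I would then conclude the equality of reflexive hulls, and verify two-generation via Theorem \ref{wemysd2}.
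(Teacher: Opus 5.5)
The gap is the step ``once the corrections are disposed of''. Neither of your routes can close it, because the statement is false as written.

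Your route is the paper's: in each parity case your $h$ is exactly the paper's $\eta=u^q+(-1)^{a_2}v^q$. Your computation is more accurate than the paper's proof, which asserts $\eta W_+=(v_2,v_1^{n-q}v_3)$. In fact $\eta(u^q+v^q)=w_2$ and $\eta(u^q-v^q)=w_3$, and only one of $w_2=v_2$, $w_3=v_3$ holds, according to the parity of $a_2$. So the correction terms you found are really there.

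\textbf{Your first route fails.} Absorbing the corrections into the span fails by a degree count. For $a_2$ even you would need $v_1^q\in R_{n,q}v_2+R_{n,q}v_1^{n-q}v_3$. Since $R_{n,q}$ is graded with degree-zero part $\mathbb C$ and $\deg v_1^{n-q}v_3=2n>2q$, this forces $u^qv^q\in\mathbb C v_2=\mathbb C(u^{2q}+v^{2q})$, which is impossible.

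\textbf{Your fallback misapplies the character classification.} That classification concerns the full isotypic modules $M_\chi$, equivalently modules generated by $\chi$-semi-invariants with no common factor. Multiplying by the non-invariant $h$ preserves the isomorphism class but changes the character. So $hW_+$ and $(v_2,v_1^{n-q}v_3)$ do carry the same character. However, every element of $hW_+$ is divisible by $h$, while $v_2$ and $v_1^{n-q}v_3$ are coprime. Hence $hW_+$ is a proper submodule of that isotypic module, and sharing its character says nothing about isomorphism.

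\textbf{The two modules are not isomorphic.} An $R_{n,q}$-isomorphism between rank-one submodules of $\mathbb C(u,v)$ is multiplication by some $h\in\mathbb C(u,v)^\times$ (extend it linearly over $\operatorname{Frac}R_{n,q}$). Suppose $hW_+=(v_2,v_1^{n-q}v_3)$.
\begin{enumerate}
\item[(1)] $h$ is a polynomial, because $u^q+v^q$ and $(uv)^{n-q}(u^q-v^q)$ are coprime and $hW_+\subseteq\mathbb C[u,v]$.
\item[(2)] $h$ is a nonzero constant, because $v_2=u^{2q}\pm v^{2q}$ and $v_1^{n-q}v_3$ are coprime and both lie in $h\,\mathbb C[u,v]$.
\item[(3)] Hence $v_2\in W_+$.
\end{enumerate}
But every generator $z_j$ of $R_{n,q}$ is a polynomial in $uv$, $u^{2q}$, $v^{2q}$. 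So every monomial $u^av^b$ occurring in an element of $W_+$ has $a-b\equiv q\pmod{2q}$, whereas the monomials of $v_2$ have $a-b\equiv 0$. Equivalently, $\mathrm{diag}(\varepsilon_{2q},\varepsilon_{2q}^{-1})\in\mathcal D_{n,q}$, with $\varepsilon_{2q}$ a primitive $2q$-th root of unity, acts by $-1$ on both generators of $W_+$ and by $+1$ on $v_1,v_2,v_3$. So $v_2\notin W_+$, and $W_+\not\cong(v_2,v_1^{n-q}v_3)$ with the paper's definitions.

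What your computation actually proves is $W_+\cong(w_2,w_1^{n-q}w_3)$, with $w_1,w_2,w_3$ as in Theorem \ref{wemysd2}. The same objection applies to the paper's claim $W_-\cong(v_3,v_1^{n-q}v_2)$, and to the later identifications of $W_\pm$ that are built on these presentations.
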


\begin{proof} Set $\eta=u^q+(-1)^{a_2}v^q$. Multiplying $W_+$ by $\eta$ gives
$$\eta W_+=\bigl(\eta(u^q+v^q), \eta(uv)^{n-q}(u^q-v^q)\bigr)=(v_2,v_1^{n-q}v_3)$$
Since $\eta$ is nonzero, multiplication by $\eta$ preserves the isomorphism class of the fractional ideal, and hence
$W_+\cong(v_2,v_1^{n-q}v_3)$. \end{proof}

\noindent We now identify the matrix factorization corresponding to $W_+$.

\begin{prop}\label{Wplus-D} Assume that $\alpha\geq s_3$. Then $\nu^*M_{\alpha-s_3,\,\beta+\widehat s_3}^{(1)}\cong W_+$.
\end{prop}

\begin{proof} By Lemma \ref{Wplus-presentation},
$W_+\cong(v_2,v_1^{n-q}v_3)$. Since
$$
Z_2=v_1^{r_2}v_2,\qquad Z_3=v_1^{r_3}v_3,\qquad r_2-r_3=n-q$$
multiplication by $v_1^{r_2}$ gives
$$v_1^{r_2}W_+\cong (v_1^{r_2}v_2,v_1^{r_2+n-q}v_3)=(Z_2,v_1^{2r_2-r_3}v_3)$$
Since $2r_2-r_3=2(n-q)+r_3$ and $Z_1=v_1^{2(n-q)}$, we obtain
$W_+\cong(Z_2,Z_1Z_3)$. Under the projection $x=Z_1$, $y=Z_{e-1}$ and $z=Z_e$, Lemma \ref{elimination-D} gives
$$
Z_2=\frac{xz^\beta}{y^\alpha},\qquad Z_3=\frac{y^{s_3}}{z^{\widehat s_3}}$$
Hence
$$W_+\cong \left(\frac{xz^\beta}{y^\alpha}, \frac{xy^{s_3}}{z^{\widehat s_3}}\right)\cong \left(y^{\alpha+s_3},z^{\beta+\widehat s_3}\right)$$
On the other hand, Proposition \ref{alternativeideals-D} gives
$M_{i,j}^{(1)}\cong (y^{2\alpha-i}, z^{2\beta+2\widehat s_3-j})$. Taking $i=\alpha-s_3$ and $j=\beta+\widehat s_3$, we obtain
$$M_{\alpha-s_3,\,\beta+\widehat s_3}^{(1)}\cong \left(y^{\alpha+s_3},z^{\beta+\widehat s_3}\right)\cong W_+.$$
Thus $W_+$ is recovered from the matrix factorization
$$(A_{\alpha-s_3,\,\beta+\widehat s_3}^{(1)}, B_{\alpha-s_3,\,\beta+\widehat s_3}^{(1)})$$
and after reflexive normalization, we get 
$\nu^*M_{\alpha-s_3,\,\beta+\widehat s_3}^{(1)}\cong W_+$. \end{proof}

\noindent We next consider the second terminal special module $W_-$. Its projected ideal presentation is obtained similarly.

\begin{prop}\label{Wminus-projected} The rank-one special CM module $W_-$ admits the projected
fractional-ideal presentation
$$W_-\cong (y^{\alpha+s_3},xz^{\beta+\widehat s_3}).$$
\end{prop}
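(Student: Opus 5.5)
The plan is to mirror the proof of Proposition \ref{Wplus-D} and Lemma \ref{Wplus-presentation}, with the roles of $v_2$ and $v_3$ interchanged.

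\emph{Step 1: rewrite $W_-$ in the invariants $v_1,v_2,v_3$.} Put $\eta=u^q+(-1)^{a_2}v^q$, exactly as in Lemma \ref{Wplus-presentation}. Multiplying the generators of $W_-$ by $\eta$ gives
$$\eta W_-=\bigl(\eta(u^q-v^q),\ \eta(uv)^{n-q}(u^q+v^q)\bigr).$$
I expect $\eta(u^q-v^q)$ to be (up to sign) $v_3$ and $\eta(u^q+v^q)$ to be $v_2$, since these are the products used to define $w_3$ and $w_2$ in Theorem \ref{wemysd2}. This yields
$$W_-\cong(v_3,\ v_1^{n-q}v_2).$$
The identification of these products with $v_2,v_3$ (in the $x,y$ versus $u,v$ coordinates, depending on the parity of $a_2$) should be checked against the conventions already used for $W_+$. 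Any sign discrepancy only changes a generator by a unit, so it does not affect the fractional ideal.

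\emph{Step 2: pass to Riemenschneider's generators.} Multiply by $v_1^{r_3}$. Using $Z_3=v_1^{r_3}v_3$ and $Z_2=v_1^{r_2}v_2$ together with $r_2-r_3=n-q$, we get
$$v_1^{r_3}W_-\cong\bigl(Z_3,\ v_1^{r_3+n-q}v_2\bigr)=(Z_3,\ Z_2).$$
So $W_-\cong(Z_2,Z_3)$. Equivalently, multiplying instead by $v_1^{r_2}$ gives $(Z_1Z_3,\ Z_1Z_2)$, which is the same class; this is the form that matches the $W_+$ computation.

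\emph{Step 3: substitute the elimination formulas.} Lemma \ref{elimination-D} gives
$$Z_2=\frac{xz^\beta}{y^\alpha},\qquad Z_3=\frac{y^{s_3}}{z^{\widehat s_3}}.$$
Hence
$$W_-\cong\Bigl(\frac{y^{s_3}}{z^{\widehat s_3}},\ \frac{xz^{\beta}}{y^{\alpha}}\Bigr).$$
Clearing denominators by multiplying by $y^\alpha z^{\widehat s_3}$ gives
$$W_-\cong\bigl(y^{\alpha+s_3},\ xz^{\beta+\widehat s_3}\bigr),$$
which is the claimed presentation. By the conventions of this subsection, all these equivalences are multiplications by nonzero elements of the common fraction field, which Theorem \ref{projected-hypersurface-D} shows is shared by $R_{n,q}$ and $\overline R_{n,q}$. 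They are therefore isomorphisms of fractional ideals.

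\emph{Main obstacle.} The hardest step is Step 1: verifying that $\eta(u^q-v^q)$ and $\eta(u^q+v^q)$ really are $v_3$ and $v_2$, and not some other combination. The definitions of $v_2,v_3$ use $x^{2q}\pm y^{2q}$, with sign $(-1)^{a_2}$, whereas $W_\pm$ are written with $u^q\pm v^q$. If the naive identification fails, I would first try $\eta'=u^q+(-1)^{a_2-1}v^q$ instead of $\eta$. Since the product $(u^q+v^q)(u^q-v^q)$ pairs with whichever factor of $\eta,\eta'$ matches, this choice should again produce $(v_3,v_1^{n-q}v_2)$, possibly with $v_2$ and $v_3$ swapped. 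A swap would be harmless at the level of Step 3 only if it reproduces the asymmetric placement of $x$ on the $z$-generator, so that asymmetry should be confirmed.
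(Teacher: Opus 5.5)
Your Steps 1--3 reproduce the paper's proof exactly, and the step you flagged as the main obstacle is where the argument fails. By the definitions in Theorem~\ref{wemysd2}, multiplication by $\eta$ gives exactly $\eta W_-=(w_3,\,w_1^{n-q}w_2)$. But $w_2,w_3$ are not $v_2,v_3$ (with $x=u$, $y=v$): for $a_2$ even, $\eta=u^q+v^q$ and $w_2=(u^q+v^q)^2=v_2+2v_1^{q}$, while for $a_2$ odd, $w_3=(u^q-v^q)^2=v_3-2v_1^{q}$. Your fallback $\eta'$ runs into the same problem, since it also produces a square $(u^q\pm v^q)^2$.

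This is not a unit or sign issue. An isomorphism between rank-one $R_{n,q}$-submodules of $\mathbb C(u,v)$ is multiplication by some $h$, and it shifts the gcd of the generators (as a divisor on $\mathbb C^2$) by $\operatorname{div}(h)$. Both $\{u^q-v^q,\,(uv)^{n-q}(u^q+v^q)\}$ and $\{v_3,\,v_1^{n-q}v_2\}$ are coprime pairs, so $h$ would be a constant and $W_-=(v_3,v_1^{n-q}v_2)$ as sets. That is impossible: $\psi=\operatorname{diag}(\varepsilon_{2q},\varepsilon_{2q}^{-1})\in\mathcal D_{n,q}$ fixes $v_3$ but acts by $\varepsilon_{2q}^{q}=-1$ on every element of $W_-$.

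A different multiplier cannot rescue Steps 2--3, because the target itself is wrong. The elements $x,y,z$ are monomials in $v_1$, $u^{2q}-v^{2q}$ and $u^{2q}+v^{2q}$, which have disjoint reduced zero sets. Hence the gcd divisor of $I=(y^{\alpha+s_3},xz^{\beta+\widehat s_3})$ is $\operatorname{div}(\delta)$ for a $\psi$-invariant monomial $\delta$ in these three. This is also the divisor cutting out $I^{\vee\vee}$; the group is small, so valuations can be computed on $\mathbb C^2$. An isomorphism of $W_-$ with $I$ or with $I^{\vee\vee}$ would therefore be multiplication by $c\delta$. That would force $\delta\,(u^q-v^q)\in\operatorname{Frac}(R_{n,q})$, which is absurd because $\delta$ is $\psi$-invariant and $u^q-v^q$ is not.

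The paper's own example $\mathbb D_{7,5}$ shows this concretely, with $X=a^4$, $Y=\tfrac12aD$, $Z=\tfrac14SD$:
\begin{itemize}
\item The module is $(Y^2,XZ)$, whose gcd divisor is $\operatorname{div}(a^2D)$.
\item So its reflexive hull contains $XY=\tfrac12a^5D$.
\item The invariant ring is generated in degrees $8,12,16,20$, so it has nothing in degree $4$.
\item Hence $XY$, $Y^2$, $XZ$ (degrees $20,24,28$) are all minimal generators of $(Y^2,XZ)^{\vee\vee}$.
\end{itemize}
So $(Y^2,XZ)^{\vee\vee}$ is not two-generated, is not special by Theorem~\ref{thm-wunram}, and cannot be $W_-$. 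The paper's proof relies on the same tacit identification $w_i=v_i$, so the proposition cannot be proved as stated.
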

 
\begin{proof} Let $\eta=u^q+(-1)^{a_2}v^q$. Multiplying $W_-$ by $\eta$ gives
$\eta W_-=(v_3,v_1^{n-q}v_2)$ and hence $W_-\cong(v_3,v_1^{n-q}v_2)$. Since
$$
Z_2=v_1^{r_2}v_2,\qquad Z_3=v_1^{r_3}v_3,\qquad r_2-r_3=n-q$$
multiplication by $v_1^{r_3}$ gives
$$v_1^{r_3}W_- \cong (v_1^{r_3}v_3,v_1^{r_3+n-q}v_2)=(Z_3,Z_2).$$
Thus $W_-\cong(Z_3,Z_2)$. Using
$$Z_2=\frac{xz^\beta}{y^\alpha}, \qquad Z_3=\frac{y^{s_3}}{z^{\widehat s_3}}$$
and multiplying by $y^\alpha z^{\widehat s_3}$, we obtain $W_-\cong (y^{\alpha+s_3},xz^{\beta+\widehat s_3})$. \end{proof}

\begin{cor}\label{Wpm-symmetry}
Put $d=\alpha+s_3$ and $h=\beta+\widehat s_3$. Then the two terminal special CM modules $W_+$ and $W_-$ are recovered
from the two cokernels of the same matrix factorization $(A^{(1)}_{\alpha-s_3, h}, B^{(1)}_{\alpha-s_3, h})$. More precisely,
$$
\nu^*\operatorname{coker}(A^{(1)}_{\alpha-s_3,h})\cong W_+, \qquad \nu^*\operatorname{coker} (B^{(1)}_{\alpha-s_3, h})\cong W_-.$$
\end{cor}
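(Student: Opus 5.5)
The plan has three steps: identify both cokernels as ideals, use the matrix-factorization structure to relate them, and then compare the results with $W_+$ and $W_-$.

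\emph{Step 1: identify the two cokernels.} With $i=\alpha-s_3$ and $j=h$, the entries of $A^{(1)}_{\alpha-s_3,h}$ are
$$a=y^{d},\qquad b=xz^{h},\qquad c=z^{h},\qquad d'=y^{\alpha-s_3}\bigl(y^{2s_3}+x^{a_2-1}z^{2\widehat s_3}\bigr),$$
since $2\alpha-i=\alpha+s_3=d$ and $2\beta+2\widehat s_3-j=h$. The first assertion is then exactly Proposition \ref{Wplus-D}. Indeed, by Proposition \ref{alternativeideals-D}, $\operatorname{coker}(A^{(1)}_{\alpha-s_3,h})\cong(a,c)=(y^d,z^h)$, which becomes $W_+$ after reflexive normalization.

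\emph{Step 2: relate the two cokernels.} Since $AB=BA=F\,\mathrm{Id}_2$, the pair $(B,A)$ is the shifted matrix factorization. The same row argument as in the proof of Proposition \ref{alternativeideals-D}, now applied to $B=\begin{pmatrix} d'&-b\\-c&a\end{pmatrix}$, shows that $\operatorname{coker}(B)$ is isomorphic to the ideal generated by the first column of $B$, namely $(d',c)$. It is also isomorphic to the ideal generated by the second column, which is $(b,a)=(xz^h,y^d)$. To see this, note that the rows of $A$ kill the columns of $B$ modulo $F$, and torsion-freeness of rank one forces the induced surjections to be isomorphisms. Therefore
$$\operatorname{coker}(B^{(1)}_{\alpha-s_3,h})\cong (y^{\alpha+s_3},xz^{\beta+\widehat s_3}).$$

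\emph{Step 3: compare with $W_-$.} By Proposition \ref{Wminus-projected}, $W_-$ has exactly this projected fractional-ideal presentation. Extending scalars along the birational inclusion $\overline R_{n,q}\to R_{n,q}$ of Theorem \ref{projected-hypersurface-D} and passing to reflexive hulls gives $\nu^*\operatorname{coker}(B)\cong W_-$, because both are reflexive rank-one modules given by the same fractional ideal in the common fraction field. The hypothesis $\alpha\ge s_3$ is needed so that the index $\alpha-s_3$ lies in the admissible range.

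\emph{Main obstacle.} The delicate point is justifying that $\nu^*$ of a two-generated fractional ideal of $\overline R_{n,q}$ is the reflexive hull of the $R_{n,q}$-submodule of $Q$ generated by the same elements. This requires that $\otimes R_{n,q}$ modulo torsion agrees with the extended ideal. It also requires that $W_\pm$, being reflexive, equals the reflexive hull of the fractional ideal $(Z_2,Z_1Z_3)$ (respectively $(Z_3,Z_2)$) computed in $R_{n,q}$. Since $W_\pm$ is generated over $R_{n,q}$ by exactly these elements, up to the unit multiples used, that ideal is already reflexive, and this settles the point.
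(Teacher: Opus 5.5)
Your proposal follows the paper's proof step for step. You use Proposition~\ref{Wplus-D} for $\operatorname{coker}(A^{(1)}_{\alpha-s_3,h})\cong(y^d,z^h)$, the row argument for $\operatorname{coker}(B^{(1)}_{\alpha-s_3,h})\cong(y^d,xz^h)$, and then Proposition~\ref{Wminus-projected}. Steps~1 and~2, the remark that $\alpha\ge s_3$ is needed, and the identification of $\nu^*$ with the reflexive hull of the extended ideal are all fine.

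The gap is the sentence that settles your ``main obstacle'': that $W_\pm$ is generated, up to a multiplier, by $Z_2,Z_1Z_3$ (resp.\ $Z_3,Z_2$). This is Lemma~\ref{Wplus-presentation} and its $W_-$ analogue in the proof of Proposition~\ref{Wminus-projected}, and it is false. With $\eta=u^q+(-1)^{a_2}v^q$ one has
\[
\eta(u^q+v^q)=v_2+\bigl(1+(-1)^{a_2}\bigr)(uv)^q,\qquad \eta(u^q-v^q)=v_3-\bigl(1-(-1)^{a_2}\bigr)(uv)^q,
\]
so one generator of $\eta W_\pm$ is always off by $2(uv)^q$. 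This is not cosmetic. Both generators of $\eta W_\pm$ are divisible by $\eta$, whereas $v_2$ and $v_1^{n-q}v_3$ are coprime, and that changes the divisor class.

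The step cannot be repaired, and in fact the statement fails. Let $\psi_{2q}=\operatorname{diag}(\epsilon,\epsilon^{-1})$ with $\epsilon$ a primitive $2q$-th root of unity. It lies in $\mathcal D_{n,q}$ and fixes $v_1,v_2,v_3$, hence every $Z_j$, in particular $X,Y,Z$.

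Compute divisors on $\mathbb C^2$, which is allowed since $\mathcal D_{n,q}$ is small. Because $\operatorname{div}v_1,\operatorname{div}v_2,\operatorname{div}v_3$ are reduced with disjoint supports, for monomials $m_1,m_2$ in $X,Y,Z$ one gets $(m_1,m_2)^{\vee\vee}=\{f\in\operatorname{Frac}(R_{n,q}) : \operatorname{div}f\ge\operatorname{div}g\}$ with $g$ a monomial in $v_1,v_2,v_3$. This is $g$ times the module of semi-invariants of character $\chi_g^{-1}$, where $\chi_g$ is the character of $g$, and $\chi_g^{-1}(\psi_{2q})=1$.

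On the other hand, $\psi_{2q}$ acts by $-1$ on the two coprime generators of $W_+$, and likewise of $W_-$. So $W_\pm^{\vee\vee}$ is the full module of semi-invariants of a character with value $-1$ at $\psi_{2q}$. Rank-one reflexive $R_{n,q}$-modules are determined by their characters, since $\operatorname{Cl}(R_{n,q})\cong\operatorname{Hom}(\mathcal D_{n,q},\mathbb C^*)$. Hence, by Proposition~\ref{alternativeideals-D}, no rank-one module of Proposition~\ref{mf-Dnq} has $\nu^*$ isomorphic to $W_\pm$.

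Concretely, take $\mathbb D_{7,5}$, where $\alpha=s_3=\beta=1$ and $\widehat s_3=0$, so $d=2$ and $h=1$. After rescaling, Example~\ref{D75-example} gives $X=(uv)^4$, $Y=uv(u^{10}-v^{10})$, $Z=(u^{10}+v^{10})(u^{10}-v^{10})$.
\begin{itemize}
\item $(Y^2,Z)^{\vee\vee}=(Y,Z)^{\vee\vee}$ is $u^{10}-v^{10}$ times the semi-invariants of the character of $uv=w_1$. That module is $W_1$ (cf.\ Proposition~\ref{W1-MF0}), not $W_+$.
\item $(Y^2,XZ)^{\vee\vee}$ is $(uv)^2(u^{10}-v^{10})$ times the semi-invariants of the character of $(uv)^3$. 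These need the three generators $(uv)^3$, $u^{10}-v^{10}$, $(uv)^2(u^{10}+v^{10})$, so the module is not even special by Theorem~\ref{thm-wunram}.
\end{itemize}

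What your argument, like the paper's, actually proves is $\nu^*\operatorname{coker}(A^{(1)}_{\alpha-s_3,h})\cong(Y^d,Z^h)^{\vee\vee}$ and $\nu^*\operatorname{coker}(B^{(1)}_{\alpha-s_3,h})\cong(Y^d,XZ^h)^{\vee\vee}$. Reaching $W_\pm$ needs non-monomial generators that separate the factors $u^q\pm v^q$. In the example these are $Z\pm 2XY=(u^{10}-v^{10})(u^5\pm v^5)^2$.
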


\begin{proof} By Proposition \ref{Wplus-D}, we have $\operatorname{coker}(A^{(1)}_{\alpha-s_3,h})\cong (y^d, z^h)$
whose reflexive normalization is $W_+$. On the other hand, $\operatorname{coker} (B^{(1)}_{\alpha-s_3,h})\cong (y^d, xz^h)$. 
By Proposition \ref{Wminus-projected}, we obtain that $W_-\cong(y^d,xz^h)$ and the result follows after reflexive normalization. \end{proof}

\subsection{Reflexive normalization of the matrix-factorization modules} We now compare Wemyss's rank-one special CM modules with the reflexive normalizations of the rank-one modules arising from the three families of matrix factorizations.

\begin{prop}\label{normalization-MF-D} For a rank-one MCM $\overline R_{n,q}$-module $M$, let
$\nu^*M=(M\otimes_{\overline R_{n,q}}R_{n,q})^{\vee\vee}$. Then
\begin{align*}
\nu^*M_{i,j}^{(1)}&\cong \left(Y^{2\alpha-i}, Z^{2\beta+2\widehat s_3-j}\right)^{\vee\vee},\\
\nu^*M_j^{(2)}&\cong \left(X,Y^{2\alpha+2s_3-j}\right)^{\vee\vee},\\
\nu^*M_{i,j}^{(3)}&\cong \left(Z^{2\widehat s_3-i}, Y^{2\alpha+2s_3-j}\right)^{\vee\vee}.
\end{align*}
\end{prop}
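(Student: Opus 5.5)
The plan is to combine the ideal presentations of Proposition \ref{alternativeideals-D} with the elementary behaviour of extension of scalars on ideals. Write $X,Y,Z$ for the images $\varphi(x)=Z_1$, $\varphi(y)=Z_{e-1}$, $\varphi(z)=Z_e$ in $R_{n,q}$. By Theorem \ref{projected-hypersurface-D}, $\varphi$ induces an injective homomorphism $\overline R_{n,q}\hookrightarrow R_{n,q}$ of domains with the same fraction field $Q$. The argument will be uniform in the three families. It suffices to show that for any two-generated ideal $I=(g_1,g_2)\subset\overline R_{n,q}$ we have
$$(I\otimes_{\overline R_{n,q}}R_{n,q})^{\vee\vee}\cong (IR_{n,q})^{\vee\vee}=\bigl(\varphi(g_1),\varphi(g_2)\bigr)^{\vee\vee}.$$
Applied to $I=(y^{2\alpha-i},z^{2\beta+2\widehat s_3-j})$, $(x,y^{2\alpha+2s_3-j})$ and $(z^{2\widehat s_3-i},y^{2\alpha+2s_3-j})$, this gives the three displayed formulas directly.

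For the key step, consider the natural surjection $\mu\colon I\otimes_{\overline R_{n,q}}R_{n,q}\to IR_{n,q}$, $g\otimes r\mapsto \varphi(g)r$. Localize at the zero ideal, i.e. tensor with $Q$. Then $I\otimes Q\cong Q$, since $I$ has rank one and $Q$ is also the fraction field of $\overline R_{n,q}$. Hence $\ker\mu\otimes Q=0$, so $\ker\mu$ is exactly the torsion submodule $T$ of $I\otimes R_{n,q}$. Thus $(I\otimes R_{n,q})/T\cong IR_{n,q}$.

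Next, the double dual kills torsion. For any finitely generated module $N$ over the domain $R_{n,q}$, $\operatorname{Hom}(T,R_{n,q})=0$, and $\operatorname{Hom}(N,R_{n,q})\cong \operatorname{Hom}(N/T,R_{n,q})$, because every map to a torsion-free module vanishes on $T$. Hence $N^{\vee}\cong (N/T)^{\vee}$ and therefore $N^{\vee\vee}\cong (N/T)^{\vee\vee}$. Taking $N=I\otimes R_{n,q}$ yields the claimed isomorphism. Note that $IR_{n,q}$ is a nonzero ideal of $R_{n,q}$, because $\varphi$ is injective.

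The only place where care is needed is confirming that $I\otimes Q$ is one-dimensional over $Q$. This is where we use that $\overline R_{n,q}\to R_{n,q}$ is birational (Theorem \ref{projected-hypersurface-D}) and that the modules $M^{(\ell)}$ are torsion-free of rank one. I do not expect a real obstacle here. I also would not try to compute the reflexive hulls explicitly at this stage, since the statement only asks for the double-dual form. The identification with Wemyss's modules is deferred, as in Propositions \ref{W1-MF0} and \ref{Wplus-D}.
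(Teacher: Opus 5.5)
Your proposal is correct and follows the paper's proof. Both arguments reduce via Proposition \ref{alternativeideals-D} to the ideals $I$, use the natural surjection $I\otimes_{\overline R_{n,q}}R_{n,q}\to IR_{n,q}$ (an isomorphism over the common fraction field, so its kernel is torsion), and then pass to reflexive hulls. The only difference is that you spell out why the double dual kills torsion, which the paper makes explicit only later in Lemma \ref{birational-lattice-double-dual}.
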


\begin{proof} By Proposition \ref{alternativeideals-D},
\begin{align*}
M_{i,j}^{(1)}&\cong \left(y^{2\alpha-i}, z^{2\beta+2\widehat s_3-j}\right),\\
M_j^{(2)}&\cong \left(x,y^{2\alpha+2s_3-j}\right),\\
M_{i,j}^{(3)}&\cong \left(z^{2\widehat s_3-i}, y^{2\alpha+2s_3-j}\right)
\end{align*}

\noindent Let $I$ denote any one of these ideals. Extension of scalars along $\iota$ gives a natural surjection
$$I\otimes_{\overline R_{n,q}}R_{n,q}\longrightarrow IR_{n,q},\qquad a\otimes r\longmapsto\iota(a)r$$
Since $\overline R_{n,q}$ and $R_{n,q}$ have the same fraction field, this map becomes an isomorphism over the fraction field. Its kernel is
therefore torsion, and taking reflexive hulls gives
$$
\left(I\otimes_{\overline R_{n,q}}R_{n,q}\right)^{\vee\vee}\cong \left(IR_{n,q}\right)^{\vee\vee}$$
The result now follows from $x\mapsto X$, $y\mapsto Y$ and $z\mapsto Z$. \end{proof}

\subsubsection{The terminal modules $W_+$ and $W_-$} Consider the two rank-one special CM modules corresponding to
the short branches. Put $\eta=u^q+(-1)^{a_2}v^q$. The two modules admit parallel fractional ideal descriptions which
allow us to express them in the projected coordinates.

\begin{prop}\label{Wpm-projected} The terminal special CM modules satisfy
$$W_+\cong \left(y,z^{\alpha+s_3}\right), \qquad W_-\cong \left(xy,z^{\alpha+s_3}\right)$$
Moreover, if $2\beta+2\widehat s_3\geq\alpha+s_3$ then
$$
\nu^*M_{2\alpha-1, 2\beta+2\widehat s_3-\alpha-s_3}^{(1)}\cong W_+.$$
\end{prop}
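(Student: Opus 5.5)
The plan is to prove the two fractional-ideal presentations first, and then deduce the statement about $M^{(1)}$ directly from Propositions \ref{alternativeideals-D} and \ref{normalization-MF-D}.

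\emph{The last claim.} Put $i=2\alpha-1$ and $j=2\beta+2\widehat s_3-\alpha-s_3$. The hypothesis $2\beta+2\widehat s_3\geq \alpha+s_3$ is exactly what gives $j\geq 0$. The bound $j\leq 2\beta+2\widehat s_3-1$ holds because $\alpha+s_3\geq 1$. So $(i,j)$ lies in the parameter range of the first family. Proposition \ref{alternativeideals-D} then gives
$$M^{(1)}_{i,j}\cong (y^{2\alpha-i},z^{2\beta+2\widehat s_3-j})=(y,z^{\alpha+s_3}).$$
Proposition \ref{normalization-MF-D} gives $\nu^*M^{(1)}_{i,j}\cong (Y,Z^{\alpha+s_3})^{\vee\vee}$. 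Since $W_+$ is reflexive, the claim reduces to showing $(Y,Z^{\alpha+s_3})^{\vee\vee}\cong W_+$, which is the first assertion read over $R_{n,q}$.

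\emph{The presentations of $W_\pm$.} From Lemma \ref{Wplus-presentation} and the proof of Proposition \ref{Wminus-projected} we already have
$$W_+\cong (Z_2,Z_1Z_3),\qquad W_-\cong(Z_3,Z_2).$$
A plain rescaling of these ideals gives $(y^{\alpha+s_3},z^{\beta+\widehat s_3})$, not $(y,z^{\alpha+s_3})$. So the new presentation can only hold up to reflexive hull, and I would prove it at that level. Over the normal domain $R_{n,q}$, rank-one reflexive modules are classified by $\operatorname{Cl}(R_{n,q})$. The reflexive hull of a two-generated fractional ideal $(g_1,g_2)$ is
$$\{h\in Q \mid \operatorname{div}(h)\geq -\min(\operatorname{div} g_1,\operatorname{div} g_2)\}.$$
Hence it suffices to show that the divisors $\min(\operatorname{div} g_1,\operatorname{div} g_2)$ attached to $(Z_2,Z_1Z_3)$ and to $(Z_{e-1},Z_e^{\alpha+s_3})$ differ by a principal divisor. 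Likewise for $(Z_3,Z_2)$ and $(Z_1Z_{e-1},Z_e^{\alpha+s_3})$.

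\emph{Computing these divisors.} I would pull everything back to $\mathbb C[u,v]$. By Riemenschneider's formulas each generator factors as
$$Z_k=v_1^{r_k}v_2^{c_k}v_3^{d_k},$$
and $v_1,v_2,v_3$ split further into the prime $G$-orbit factors $u$, $v$, $u^q\pm v^q$ and $\eta$. Height-one primes of $R_{n,q}$ correspond to $G$-orbits of height-one primes in $\mathbb C[u,v]$. So each minimum is computed factor by factor from the exponent vectors $(r_k,c_k,d_k)$, for $k=1,2,3,e-1,e$. For the endpoints I would use $r_e=0$ and $r_{e-1}=1$, and the relations $c_e-c_{e-1}=\Delta_r$ and $d_e-d_{e-1}=\Gamma_r$ from the proof of Proposition \ref{W1-MF0}. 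Together with the recursions defining $s_3,\widehat s_3,\alpha,\beta$, these should express the exponents at $Z_{e-1}$ and $Z_e$ in terms of those at $Z_2$ and $Z_3$. One then checks that the resulting divisor classes agree, i.e.\ give the same semi-invariant character.

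\emph{Main obstacle.} The hard step is this bookkeeping: identifying $\alpha+s_3$ with the correct combination of the $c$- and $d$-series, so that the $v_2$- and $v_3$-components of the two minima match. I would first test it on Example \ref{ex-Dm}, where $\alpha=l+2$, $\beta=1$ and $s_3,\widehat s_3$ are explicit, to fix the identities before attempting the induction on $e$. If the classes fail to match in that example, the presentation would have to be corrected to the form $(y^{\alpha+s_3},z^{\beta+\widehat s_3})$ of Proposition \ref{Wplus-D}, and the last claim adjusted accordingly.
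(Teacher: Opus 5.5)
\paragraph{There is a genuine gap, and the statement itself does not hold.} Your reduction of the last claim to $(Y,Z^{\alpha+s_3})^{\vee\vee}\cong W_+$ is fine. Your check that the indices lie in the parameter range is more careful than the paper's. You are also right that rescaling $(Z_2,Z_1Z_3)$ with Lemma~\ref{elimination-D} gives $(y^{\alpha+s_3},z^{\beta+\widehat s_3})$, not $(y,z^{\alpha+s_3})$. The paper reaches $(y,z^{\alpha+s_3})$ only by substituting $Z_2=xy^{1-\widehat s_3}/z^{\alpha}$ and $Z_3=z^{s_3}/y^{\widehat s_3}$, which are not the formulas of that lemma.

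The gap in your proposal is that the divisor-class comparison everything rests on is only announced ("should", "one then checks"). It cannot be completed, because the assertion fails in the paper's own example $\mathbb D_{7,5}$. There $\alpha=s_3=\beta=1$ and $\widehat s_3=0$, so the hypothesis $2\beta+2\widehat s_3\geq\alpha+s_3$ holds. Up to constants, $x=a^4$, $y=aD$, $z=SD$, with $a=uv$, $S=u^{10}+v^{10}$ and $D=u^{10}-v^{10}=(u^5-v^5)(u^5+v^5)$.

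Since $\mathcal D_{n,q}$ is small, $R_{n,q}\subset\mathbb C[u,v]$ is unramified in codimension one. Hence for $g_1,g_2\in R_{n,q}$ the reflexive hull of $(g_1,g_2)$ is $\{h\in\operatorname{Frac}(R_{n,q})\mid \gcd(g_1,g_2)\text{ divides }h\text{ in }\mathbb C[u,v]\}$. (Your formula with $-\min$ describes $I^\vee$, not $I^{\vee\vee}$; this is harmless for comparing classes.)

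\paragraph{The counterexample.} In $\mathbb D_{7,5}$ we have $\gcd(y,z)=\gcd(y,z^2)=\gcd(y^2,z)=\gcd(xy,z^2)=D$. So $(y,z^2)$, your fallback $(y^2,z)$, and $(xy,z^2)$ all have the same reflexive hull as $(y,z)$. Proposition~\ref{W1-MF0} identifies that hull with $W_1$, and Wunram's bijection gives $W_1\not\cong W_\pm$. This refutes both displayed presentations and the final claim.

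The obstruction is intrinsic. The only prime divisors occurring in $x,y,z$ are those of $a$, $S$, $u^5+v^5$ and $u^5-v^5$. The last two always occur with equal multiplicity, and $a^4,S^4,D^4$ are invariant. So the hull of any ideal generated by two monomials in $x,y,z$ has class in the order-$4$ subgroup of $\operatorname{Cl}(R_{7,5})\cong\mathbb Z/8$. By contrast, $[W_+]$ is the class of the prime divisor of $u^5+v^5$ and has order $8$; the dual cycle $E_+^*$ has denominator $8$. Thus in this example no presentation of $W_+$ by two monomials in $x,y,z$ exists at all.

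\paragraph{A second error upstream.} The input $W_+\cong(Z_2,Z_1Z_3)$, taken from Lemma~\ref{Wplus-presentation}, is itself wrong. When $a_2$ is even (here $a_2=4$), $\eta(u^q+v^q)=(u^q+v^q)^2\neq v_2$. When $a_2$ is odd, the second generator becomes $(uv)^{n-q}(u^q-v^q)^2\neq v_1^{n-q}v_3$.

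In $\mathbb D_{7,5}$ one has $(Z_2,Z_1Z_3)=(a^3S,a^5D)\cong(z,y^2)$, whose hull is again $W_1$. So your middle comparison would succeed on the very example you proposed to test, and it would appear to confirm a false conclusion. Any test must compare directly with $W_+=(u^q+v^q,(uv)^{n-q}(u^q-v^q))$. Done that way, it shows that this proposition, and Proposition~\ref{Wplus-D} with it, need to be corrected rather than proved.
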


\begin{proof} Multiplying the generators of $W_+$ by $\eta$ gives
$$\eta W_+=\bigl(\eta(u^q+v^q), \eta(uv)^{n-q}(u^q-v^q)\bigr)=(v_2,v_1^{n-q}v_3)$$
Hence $W_+\cong(v_2,v_1^{n-q}v_3)$. Since
$$Z_2=v_1^{r_2}v_2,\qquad Z_3=v_1^{r_3}v_3,\qquad r_2-r_3=n-q$$
multiplication by $v_1^{r_2}$ gives $W_+\cong (Z_2, Z_1Z_3)$ where we have used $Z_1=v_1^{2(n-q)}$. Substituting
$$Z_2=\frac{xy^{1-\widehat s_3}}{z^\alpha}, \qquad Z_3=\frac{z^{s_3}}{y^{\widehat s_3}}$$
and multiplying by $\displaystyle \frac{z^\alpha y^{\widehat s_3}}{x}$, we obtain $W_+\cong(y,z^{\alpha+s_3})$.

\noindent The computation for $W_-$ is analogous. Multiplication by $\eta$ gives $W_-\cong (v_3, v_1^{n-q}v_2)$. 
Multiplying by $v_1^{r_3}$ and using $r_2-r_3=n-q$, we obtain $W_-\cong (Z_3, Z_2)$. The same substitutions now give
$$W_-\cong \left(\frac{z^{s_3}}{y^{\widehat s_3}}, \frac{xy^{1-\widehat s_3}}{z^\alpha}\right)\cong (z^{\alpha+s_3}, xy)$$
So, we have $W_-\cong (xy, z^{\alpha+s_3})$. It remains to identify $W_+$ with one of the matrix factorization
modules. By Proposition \ref{alternativeideals-D}, 
$$M_{i,j}^{(1)}\cong \left(y^{2\alpha-i}, z^{2\beta+2\widehat s_3-j}\right).$$
Taking $i=2\alpha-1$ and $j=2\beta+2\widehat s_3-\alpha-s_3$ we obtain
$$M_{2\alpha-1, 2\beta+2\widehat s_3-\alpha-s_3}^{(1)}\cong \left(y,z^{\alpha+s_3}\right)\cong W_+.$$
After reflexive normalization, we obtain $\nu^*M_{2\alpha-1, 2\beta+2\widehat s_3-\alpha-s_3}^{(1)}\cong W_+$.  \end{proof}

\subsubsection{The special modules along the long branch} We now turn to the special CM modules corresponding to the vertices of
the long branch. Their ranks depend on the coefficient $b_3$. If $b_3\geq 3$, every exceptional curve on the long branch occurs with
coefficient $1$ in the fundamental cycle, and hence all the corresponding
special CM modules have rank one. We treat this case first. When $b_3=2$, some of the initial vertices occur with coefficient $2$ and
give rise to rank-two special CM modules; this case will be considered separately.

\noindent Assume that $b_3\geq3$. Then $\nu=0$, and by Theorem \ref{wemysd2} the special CM modules corresponding to the long branch
are
$$W_{i_t}=\left(v_1^{i_t},v_2^{\Delta_t}v_3^{\Gamma_t}\right),\qquad 1\leq t\leq r$$
The endpoint module $W_{i_r}=W_1$ has already been identified in Proposition \ref{W1-MF0}. We now determine the projected presentations
of the remaining modules and compare them with the matrix factorization modules.

\begin{prop}\label{W-intermediate} Assume that $b_3\geq 3$. Then, for every $1\leq t\leq r$, the
rank-one special CM module $W_{i_t}$ admits the fractional-ideal presentation
$$W_{i_t}\cong \left(v_1^q, v_2\prod_{p=1}^{t-1}Z_{l_p}\right)$$
where an empty product is $1$.
\end{prop}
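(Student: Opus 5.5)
The plan is to start from Wemyss's description in Theorem \ref{wemysd2} with $\nu=0$, namely $W_{i_t}=(v_1^{i_t},v_2^{\Delta_t}v_3^{\Gamma_t})$. We then multiply this two-generated fractional ideal by a suitable monomial in $v_1,v_2,v_3$ so that the generators take the claimed form. The key observation is that, since $b_3\geq3$ forces $\nu=0$, we have $l_1=2$ and $c_{l_1}=c_2=1$, $d_{l_1}=d_2=0$. Hence $Z_{l_1}=Z_2=v_1^{r_2}v_2$, and more generally $Z_{l_p}=v_1^{r_{l_p}}v_2^{c_{l_p}}v_3^{d_{l_p}}$. Therefore
$$v_2\prod_{p=1}^{t-1}Z_{l_p}=v_1^{R_t}\,v_2^{\Delta_t}v_3^{\Gamma_t},\qquad R_t:=\sum_{p=1}^{t-1}r_{l_p},$$
directly from the definitions $\Delta_t=1+\sum_{p<t}c_{l_p}$ and $\Gamma_t=\sum_{p<t}d_{l_p}$. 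So the $v_2,v_3$-part of the second generator matches exactly, and the whole problem reduces to bookkeeping of the powers of $v_1$.

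Multiplying $W_{i_t}$ by $v_1^{R_t}$ gives
$$W_{i_t}\cong \Bigl(v_1^{i_t+R_t},\,v_2\prod_{p=1}^{t-1}Z_{l_p}\Bigr).$$
It therefore suffices to prove the numerical identity
$$i_t+\sum_{p=1}^{t-1}r_{l_p}=q,\qquad 1\leq t\leq r.$$
We argue by induction on $t$. For $t=1$ the identity reads $i_1=q$, which is the initial condition of the $i$-series. For the inductive step we must show
$$i_{t+1}-i_t=-r_{l_t}.$$
The plan is to express $r_{l_t}$ through the recursion $r_j=a_{j-1}r_{j-1}-r_{j-2}$ along the block of $a$'s between $l_t$ and $l_{t+1}$. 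By Proposition \ref{dnq-gen}, each $b_p$ corresponds in $[a]$ to a string of $b_p-2$ entries equal to $2$ followed by a larger entry, which is consistent with $l_{t+1}-l_t=b_t-2$. Along a string of $2$'s the $r$-series is arithmetic. Consequently the values $r_{l_t}$ at the marked positions satisfy a recursion of the form
$$r_{l_{t+1}}=b_t\,r_{l_t}-r_{l_{t-1}}$$
(or an equivalent difference relation). This matches the recursion $i_{t+1}=b_ti_t-i_{t-1}$ of the $i$-series. We then verify the base case $r_{l_1}=r_2=a_2(n-q)-q$ against $q-i_2$, using $a_2=b_3-1$ when $b_3\geq3$ (again from Proposition \ref{dnq-gen}).

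The main obstacle is precisely this identification of $r_{l_t}$ with the differences of consecutive $i_t$. Two features make it delicate. First, the initial terms $r_3$ and $r_4$ are given by special formulas rather than by the generic recursion. Second, the indexing shift between $[b]=[b_3,\ldots]$ and Wemyss's $[b_1,\ldots]$ has to be tracked carefully. My first attempt would be to check the identity directly on the family of Example \ref{ex-Dm}, where everything is explicit, in order to fix the indexing conventions. After that I would carry out the general induction using Proposition \ref{dnq-gen} applied one $b$ at a time. If the sign turns out to be off, I would instead multiply by $v_1^{-R_t}$ and use $Z_1=v_1^{2(n-q)}$ to absorb the surplus powers.
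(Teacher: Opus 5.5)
Your reduction is the same as the paper's. You multiply by $v_1^{R_t}$ with $R_t=\sum_{p<t}r_{l_p}$, so everything rests on $i_t+R_t=q$, i.e.\ on $r_{l_p}=i_p-i_{p+1}$, which the paper simply asserts. You are right that this identity is the real content, but it is false, so your induction cannot be completed.

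Your base case uses a misreading of Proposition \ref{dnq-gen}. When $b_3\geq 3$ the dual fraction begins with $b_3-2\geq 1$ entries equal to $2$, so $a_2=2$, not $b_3-1$. Hence $r_2=2(n-q)-q=2n-3q$, while $q-i_2=n-(b_3-1)q$; equality would force $n=(4-b_3)q\leq q$. In general, since $r_3=a_2(n-q)-n$ and $r_4=a_3r_3-(n-q)$, the $r$-series from $j=3$ on is the $i$-series $n,\,n-q,\dots$ of $n/(n-q)$. The point diagram then gives $r_{l_p}=i_{p-1}-i_p$ for $p\geq 2$, which is shifted by one from what you need, and $r_{l_1}=r_2=(n-q)+r_3$ is not a difference of consecutive $i$'s at all. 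For example, for $n/q=8/3=[3,3]$ one has $i=(8,3,1,0)$, $[a]=[2,3,2]$, $r_2=7$ and $r_3=2$. So $i_2+r_{l_1}=8\neq 3$, and $r_{l_2}=r_3=2=i_1-i_2\neq i_2-i_3$.

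No rebalancing of $v_1$-powers, including your fallback via $Z_1$, can repair this, because the displayed presentation is itself wrong for every $t\geq 2$. With $a_2=2$ one has $r_2-q=2r_3$, $r_2+q=2(n-q)$ and $v_2^2-v_3^2=4v_1^{2q}$, so
\[
v_2Z_{l_1}=v_1^{r_2}v_2^2=v_1^{q}\bigl(v_1^{2r_3}v_3^2+4v_1^{2(n-q)}\bigr)=v_1^q\,(z_3^2+4z_1).
\]
The remaining factors $Z_{l_p}$ with $p\geq 2$ lie in $R_{n,q}$. Hence the right-hand side equals $R_{n,q}v_1^q\cong R_{n,q}$ for all $t\geq 2$, which is free, whereas $W_{i_t}$ is a non-free special module.

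So the statement is correct only for $t=1$, where it is just Theorem \ref{wemysd2} with $\Delta_1=1$ and $\Gamma_1=0$. Neither your argument nor the paper's can establish it as stated. Using $r_{l_p}=i_{p-1}-i_p$ for $p\geq 2$, the correct bookkeeping gives $i_t+\sum_{p<t}r_{l_p}=2(n-q)-(i_{t-1}-i_t)$ rather than $q$.
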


\begin{proof} Since $b_3\geq 3$, we have $\nu=0$ and $i_1=q$. By Theorem \ref{wemysd2},
$$
\Delta_t=1+\sum_{p=1}^{t-1}c_{l_p},\qquad \Gamma_t=\sum_{p=1}^{t-1}d_{l_p}$$
Moreover we have $r_{l_p}=i_p-i_{p+1}$ and therefore 
$$
\sum_{p=1}^{t-1}r_{l_p}=\sum_{p=1}^{t-1}(i_p-i_{p+1})=q-i_t.$$
Using $Z_{l_p}=v_1^{r_{l_p}}v_2^{c_{l_p}}v_3^{d_{l_p}}$ we obtain
$$v_2\prod_{p=1}^{t-1}Z_{l_p}=v_1^{\sum_{p=1}^{t-1}r_{l_p}}v_2^{1+\sum_{p=1}^{t-1}c_{l_p}}v_3^{\sum_{p=1}^{t-1}d_{l_p}}=
v_1^{q-i_t}v_2^{\Delta_t}v_3^{\Gamma_t}$$
Hence we get 
$$v_1^{q-i_t}W_{i_t}=\left(v_1^q, v_1^{q-i_t}v_2^{\Delta_t}v_3^{\Gamma_t}\right)=\left(v_1^q, v_2\prod_{p=1}^{t-1}Z_{l_p}\right)$$
Since multiplication by the nonzero rational function $v_1^{q-i_t}$ does not change the isomorphism class of a fractional
ideal, the result follows.  \end{proof}

\noindent We next express these modules in the projected coordinates. For $t=1$, Proposition \ref{W-intermediate} gives
$W_{i_1}\cong(v_1^q,v_2)$. Since
$$Z_1=v_1^{2(n-q)},\qquad Z_2=v_1^{2(n-q)-q}v_2,\qquad Z_2=\frac{xz^\beta}{y^\alpha}$$
we obtain $v_2=v_1^q\frac{z^\beta}{y^\alpha}$. Multiplying by $y^\alpha/v_1^q$ we obtain
$W_{i_1}\cong(y^\alpha,z^\beta)$. For $2\leq t\leq r$, put
$$S_t=\sum_{p=2}^{t-1}s_{l_p},\qquad \widehat S_t=\sum_{p=2}^{t-1}\widehat s_{l_p}$$
where an empty sum is  zero.

\noindent Since $l_1=2$, we have 
$$v_2\prod_{p=1}^{t-1}Z_{l_p}=v_2Z_2\prod_{p=2}^{t-1}Z_{l_p}.$$
For $p\geq2$, the elimination formulas give
$$Z_{l_p}=\frac{y^{s_{l_p}}}{z^{\widehat s_{l_p}}}$$
and hence
$$\prod_{p=2}^{t-1}Z_{l_p}=\frac{y^{S_t}}{z^{\widehat S_t}}.$$
Using $v_2=v_1^q\frac{z^\beta}{y^\alpha}$ and $Z_2=\frac{xz^\beta}{y^\alpha}$
we obtain
$$v_2\prod_{p=1}^{t-1}Z_{l_p}=v_1^qxy^{S_t-2\alpha}z^{2\beta-\widehat S_t}.$$
Therefore Proposition \ref{W-intermediate} gives
$$W_{i_t} \cong \left(y^{2\alpha-S_t}, xz^{2\beta-\widehat S_t}\right),\qquad 2\leq t\leq r.$$

\noindent These projected presentations can now be compared directly with the matrix factorizations. We begin with $W_{i_1}$. By Proposition \ref{alternativeideals-D},
$$M_{i,j}^{(1)}\cong \left(y^{2\alpha-i}, z^{2\beta+2\widehat s_3-j}\right).
$$
Since $W_{i_1}\cong \left(y^\alpha,z^\beta\right)$, taking $i=\alpha $ and $j=\beta+2\widehat s_3$ we get 
$$\nu^*M_{\alpha,\beta+2\widehat s_3}^{(1)}\cong \left(Y^\alpha, Z^\beta\right)^{\vee\vee}\cong W_{i_1}.$$

\noindent It remains to consider $W_{i_t}$ for $2\leq t\leq r-1$, since the endpoint $W_{i_r}=W_1$ has already been treated in Proposition \ref{W1-MF0}. For these modules,
$$W_{i_t}\cong \left(y^{2\alpha-S_t}, xz^{2\beta-\widehat S_t}\right).$$
On the other hand, the cokernel of the second matrix in the first family satisfies
$$\operatorname{coker}B_{i,j}^{(1)}\cong \left(xz^j, y^{2\alpha-i}\right)$$
Thus the natural choice of indices is $i=S_t$ and $j=2\beta-\widehat S_t$. To verify that these indices lie in the parameter range of the first family, we record the following bounds.

\begin{lem}\label{bounds-St} Assume that $b_3\geq3$. For $2\leq t\leq r-1$, one has
$0\leq S_t\leq\alpha-1$ and $0\leq\widehat S_t\leq\beta-1$. Hence, we have
$0\leq S_t\leq2\alpha-1$ and $0\leq 2\beta-\widehat S_t \leq 2\beta+2\widehat s_3-1$. Thus
$(S_t, 2\beta-\widehat S_t)$ lies in the parameter range of the first family of matrix factorizations.
\end{lem}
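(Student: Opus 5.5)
The plan is to prove the bounds directly from the definitions of $S_t$, $\widehat S_t$, $\alpha$ and $\beta$. Recall that
$$S_t=\sum_{p=2}^{t-1}s_{l_p},\qquad \widehat S_t=\sum_{p=2}^{t-1}\widehat s_{l_p},$$
$$\alpha=1+\sum_{j=3}^{e-1}(a_j-2)s_j,\qquad \beta=1+\sum_{j=3}^{e-2}(a_j-2)\widehat s_j.$$
The strategy is to compare these sums term by term.

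The first step is to establish nonnegativity of the sequences $s_j$ and $\widehat s_j$. This follows by downward induction from the initial values $s_e=0$, $s_{e-1}=1$, $s_{e-2}=a_{e-1}$ and the recursion. It is cleaner to use the interpretation from Lemma \ref{elimination-D}: $s_j$ and $\widehat s_j$ are exponents describing the monomial $z_j$ in terms of $z_{e-1},z_e$. In the $(u,v)$-coordinates this gives the relation $r_j=s_j r_{e-1}-\widehat s_j r_e$, and similarly for the $c$- and $d$-series. From this, one shows that $(s_j)$ and $(\widehat s_j)$ are nonnegative and nonincreasing in $j$, so that $s_j\ge \widehat s_j\ge 0$. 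Nonnegativity of both $S_t$ and $\widehat S_t$ then follows immediately, and an empty sum is $0$.

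The second step, the upper bounds, is the heart of the matter. Since $b_3\ge3$ we have $\nu=0$, and Proposition \ref{dnq-gen} shows that $[a]$ begins with $b_3-2\ge1$ entries equal to $2$, followed by $a_{b_3}+1$. The positions $l_p$ with $2\le p\le r-1$ are, by the definition of $l_t$, exactly the indices in $\{3,\ldots,e-1\}$ at which $a_{l_p}\ge3$. More precisely, the $l_p$ are the partial sums $2+\sum(b_q-2)$, and these mark the entries of $[a]$ that differ from $2$. Hence each summand $s_{l_p}$ of $S_t$ also occurs in $\alpha-1$ with coefficient $a_{l_p}-2\ge1$. This gives $S_t\le\sum_{p=2}^{r-1}s_{l_p}\le\alpha-1$. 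The same term-by-term comparison gives $\widehat S_t\le\beta-1$, provided each $l_p\le e-2$ for $p\le r-1$; this holds because $l_r=e-1$ and the $l_p$ are strictly increasing.

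The main obstacle I expect is the precise index bookkeeping in this second step. One must verify that $l_p\in\{3,\ldots,e-2\}$ for $2\le p\le r-1$ and that $a_{l_p}\ge 3$ there. If some $b_p=2$ makes $l_p=l_{p+1}$, then distinct $p$ could hit the same index, and the coefficient $a_{l}-2$ must be shown to be at least the multiplicity. I would handle this by the duality remark following Proposition \ref{dnq-4}: a string of $k$ consecutive $2$'s in $[b]$ collapses to a single entry $k+2$ in $[a]$, so the multiplicity of an index $l$ among the $l_p$ is at most $a_l-2$. The bound therefore still holds term by term.

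Finally, the parameter-range statements follow from these bounds. From $0\le S_t\le\alpha-1$ we get $0\le S_t\le 2\alpha-1$. From $0\le\widehat S_t\le\beta-1$ we get $\beta+1\le 2\beta-\widehat S_t\le 2\beta\le 2\beta+2\widehat s_3-1$, where the last inequality uses $\widehat s_3\ge1$, which holds when $e\ge5$.
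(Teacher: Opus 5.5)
Your overall strategy matches the paper's: nonnegativity of $s_j,\widehat s_j$, then a term-by-term comparison of $S_t,\widehat S_t$ with the sums defining $\alpha-1$ and $\beta-1$. Your downward induction (carrying monotonicity to control the minus sign in the recursion) supplies the nonnegativity, which the paper only asserts. However, two steps of your index bookkeeping are wrong as written.

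First, $l_{p+1}=l_p+b_p-2$, so the $l_p$ are \emph{constant} along runs of $b_p=2$, as you yourself note a paragraph later. They are therefore not strictly increasing, and $l_r=e-1$ holds only when the last entry $b_r$ equals $2$. Consequently your proviso ``$l_p\le e-2$ for $p\le r-1$'' can fail. For $n/q=23/9=[b_1,\dots,b_5]=[3,3,2,2,2]$ (the indexing used for $l_t$), one has $n/(n-q)=[2,3,5]$, $e=5$, and $l_2=3$, $l_3=l_4=l_5=4=e-1$; here $l_3$ enters $\widehat S_4$. The proviso is unnecessary, though. Because $\widehat s_{e-1}=0$, any $l_p=e-1$ contributes nothing, so $\sum_p\widehat s_{l_p}\le\sum_{j=3}^{e-1}(a_j-2)\widehat s_j=\sum_{j=3}^{e-2}(a_j-2)\widehat s_j=\beta-1$. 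This is exactly how the paper handles it.

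Second, your collapse count is off by one. An interior run of $k$ entries equal to $2$ produces $k+1$ equal values of $l_p$, and the corresponding dual entry is $k+3$, not $k+2$. Compare the paper's remark, where $r-4$ twos give the entry $r-1$; for instance $[3,2,3]$ has dual $[2,4,2]$. With your value $k+2$, the inequality ``multiplicity $\le a_l-2$'' would read $k+1\le k$, which is false. The correct input is Riemenschneider's point rule as the paper uses it: for each $3\le j\le e-1$, exactly $a_j-2$ indices $p\in\{2,\dots,r\}$ satisfy $l_p=j$. This gives $\sum_{p=2}^r s_{l_p}=\alpha-1$ and $\sum_{p=2}^r\widehat s_{l_p}=\beta-1$ directly. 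The bounds on $S_t$ and $\widehat S_t$ are then immediate partial-sum estimates, and your final parameter-range paragraph (using $\widehat s_3\ge1$ for $e\ge5$) goes through as written.
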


\begin{proof} By Riemenschneider's point rule, for every $3\leq j\leq e-1$, the number of indices $p\in\{2,\ldots,r\}$ such that $l_p=j$
is $a_j-2$. Hence
$$
\sum_{p=2}^r s_{l_p}=\sum_{j=3}^{e-1}(a_j-2)s_j=\alpha-1.$$
Similarly,
$$
\sum_{p=2}^r\widehat s_{l_p}=\sum_{j=3}^{e-1}(a_j-2)\widehat s_j=\sum_{j=3}^{e-2}(a_j-2)\widehat s_j=\beta-1$$
where we have used $\widehat s_{e-1}=0$. Since
$$S_t=\sum_{p=2}^{t-1}s_{l_p},\qquad \widehat S_t=\sum_{p=2}^{t-1}\widehat s_{l_p}$$
and all the terms in these sums are nonnegative, they are partial sums of the preceding total sums. Therefore
have $0\leq S_t\leq\alpha-1$ and $0\leq\widehat S_t\leq\beta-1$. It follows immediately that
$0\leq S_t\leq\alpha-1\leq2\alpha-1$. Moreover,
$2\beta-\widehat S_t\leq2\beta $. Since $\widehat s_3\geq1$, we have
$2\beta\leq2\beta+2\widehat s_3-1$. On the other hand,
$$2\beta-\widehat S_t\geq2\beta-(\beta-1)=\beta+1>0$$
Thus we obtain $0\leq2\beta-\widehat S_t\leq2\beta+2\widehat s_3-1$. Hence
$(S_t,2\beta-\widehat S_t)$ lies in the parameter range of the first family. \end{proof}

\begin{prop}\label{Wit-MF} Assume that $b_3\geq3$. For every $2\leq t\leq r-1$,
$$\nu^*\operatorname{coker}B^{(1)}_{S_t,\,2\beta-\widehat S_t}\cong W_{i_t}.$$
\end{prop}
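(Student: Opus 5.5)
The plan is to combine three things already established: the projected presentation of $W_{i_t}$ derived just before Lemma \ref{bounds-St}, the ideal presentation of $\operatorname{coker}B^{(1)}_{i,j}$ from Proposition \ref{alternativeideals-D}, and the reflexive normalization from Proposition \ref{normalization-MF-D}. Fix $2\leq t\leq r-1$ and set $i=S_t$, $j=2\beta-\widehat S_t$. By Lemma \ref{bounds-St}, $(i,j)$ lies in the parameter range of the first family. Hence $(A^{(1)}_{i,j},B^{(1)}_{i,j})$ is a matrix factorization by Proposition \ref{mf-Dnq}, and $\operatorname{coker}B^{(1)}_{i,j}$ is a rank-one MCM $\overline R_{n,q}$-module.

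First I would identify $\operatorname{coker}B^{(1)}_{i,j}$ as an ideal. I apply the argument of Proposition \ref{alternativeideals-D} with the roles of $A$ and $B$ exchanged. The rows of $A$ induce surjections from $\operatorname{coker}(B)$ onto the ideals generated by the columns of $B$. Since all modules involved are torsion-free of rank one, these surjections are isomorphisms. This gives
$$\operatorname{coker}B^{(1)}_{i,j}\cong\bigl(xz^j,\,y^{2\alpha-i}\bigr)=\bigl(y^{2\alpha-S_t},\,xz^{2\beta-\widehat S_t}\bigr).$$
This is exactly the projected presentation of $W_{i_t}$ obtained before Lemma \ref{bounds-St}. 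Those computations came from Proposition \ref{W-intermediate} together with the elimination formulas of Lemma \ref{elimination-D}, namely $Z_{l_p}=y^{s_{l_p}}/z^{\widehat s_{l_p}}$ for $p\geq2$ and $Z_2=xz^\beta/y^\alpha$, followed by multiplication by the unit $y^{2\alpha-S_t}z^{\widehat S_t}/(v_1^q x z^{\beta})$-type factors in the fraction field.

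Next I would pass to $R_{n,q}$. By the argument of Proposition \ref{normalization-MF-D}, $\nu^*\operatorname{coker}B^{(1)}_{i,j}\cong\bigl(Y^{2\alpha-S_t},XZ^{2\beta-\widehat S_t}\bigr)^{\vee\vee}$, with $X=Z_1$, $Y=Z_{e-1}$, $Z=Z_e$ in $R_{n,q}$. This fractional ideal of $R_{n,q}$ is a nonzero rational multiple of $\bigl(v_1^{i_t},v_2^{\Delta_t}v_3^{\Gamma_t}\bigr)=W_{i_t}$; the identity is the same chain of equalities in $\operatorname{Frac}(R_{n,q})$ read backwards. Since $W_{i_t}$ is special, it is MCM and hence reflexive over the normal ring $R_{n,q}$. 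Reflexive hulls commute with multiplication by nonzero elements of the fraction field. Therefore
$$\bigl(Y^{2\alpha-S_t},XZ^{2\beta-\widehat S_t}\bigr)^{\vee\vee}\cong W_{i_t}^{\vee\vee}=W_{i_t}.$$

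The main obstacle is the bookkeeping in the fraction field needed to confirm the exponents: that $v_2\prod_{p=1}^{t-1}Z_{l_p}=v_1^q\,x\,y^{S_t-2\alpha}z^{2\beta-\widehat S_t}$. This uses $v_2=v_1^q z^\beta/y^\alpha$, which follows from $Z_2=v_1^{2(n-q)-q}v_2$ and $Z_1=x=v_1^{2(n-q)}$. Then clearing denominators by $y^{2\alpha-S_t}/v_1^q$ gives the pair $\bigl(y^{2\alpha-S_t},xz^{2\beta-\widehat S_t}\bigr)$. I would recheck this identity carefully, in particular the consistency of $r_2=2(n-q)-q$ with Riemenschneider's $r_2=a_2(n-q)-q$ when $b_3\geq3$, which forces $a_2=2$. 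That consistency is supplied by Proposition \ref{dnq-gen}. Everything else is formal once the parameters lie in range, which is Lemma \ref{bounds-St}.
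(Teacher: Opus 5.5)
Your proposal follows the paper's proof step for step: the parameter range from Lemma \ref{bounds-St}, the ideal presentation of $\operatorname{coker}B^{(1)}_{i,j}$, and reflexive normalization. Your justification of $\operatorname{coker}B^{(1)}_{i,j}\cong(xz^j,y^{2\alpha-i})$ via the rows of $A^{(1)}_{i,j}$ is fine. The gap is in the step you call formal, namely that $\bigl(Y^{2\alpha-S_t},XZ^{2\beta-\widehat S_t}\bigr)$ is a nonzero rational multiple of $W_{i_t}$. This is false, and with it the statement itself fails for every $t$ in its range.

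Lemma \ref{elimination-D} gives $Z_2=XZ^\beta/Y^\alpha$. The first relation of Proposition \ref{reduced-ideal} then yields $XZ^{2\beta}/Y^{2\alpha}=Z_2^2/Z_1=Z_3^2+Z_1^{a_2-1}\in R_{n,q}$. Since $3\le l_p\le e-1$ for $p\ge 2$, one also has $Y^{S_t}/Z^{\widehat S_t}=\prod_{p=2}^{t-1}Z_{l_p}\in R_{n,q}$. Hence
\[
XZ^{2\beta-\widehat S_t}=Y^{2\alpha-S_t}\bigl(Z_3^2+Z_1^{a_2-1}\bigr)\prod_{p=2}^{t-1}Z_{l_p}\in Y^{2\alpha-S_t}R_{n,q}.
\]
So the extended ideal is principal, and $\nu^*\operatorname{coker}B^{(1)}_{S_t,2\beta-\widehat S_t}\cong R_{n,q}$ is free, whereas $W_{i_t}$ is non-free. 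A concrete instance is $\mathbb D_{13,5}$: here $13/5=[3,3,2]$, $13/8=[2,3,3]$, $e=5$, $s_3=3$, $\widehat s_3=1$, $\alpha=5$, $\beta=2$ and $F=y^{16}+xy^{10}z^2-xz^6$. For $t=2$, $\operatorname{coker}B^{(1)}_{0,4}\cong(y^{10},xz^4)\cong(z^2,y^6)$, and this extends to $Z^2R_{n,q}$ because $Y^6/Z^2=Z_3^2$.

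The defect is upstream, in Proposition \ref{W-intermediate}, which is the chain you propose to read backwards. That proposition uses $r_{l_p}=i_p-i_{p+1}$, but this fails already for $p=1$:
\[
r_{l_1}=r_2=a_2(n-q)-q=2n-3q,\qquad i_1-i_2=n-(b_3-1)q,
\]
where $a_2=2$ by Proposition \ref{dnq-gen} and $b_3$ is the first entry of $n/q$. Equality would force $n=(4-b_3)q$, which is impossible; for $\mathbb D_{13,5}$ one has $r_2=11$ but $i_1-i_2=3$. Thus $v_2\prod_{p<t}Z_{l_p}$ is not $v_1^{q-i_t}v_2^{\Delta_t}v_3^{\Gamma_t}$.

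The identity you singled out as the main risk, $v_2\prod_{p=1}^{t-1}Z_{l_p}=v_1^q\,x\,y^{S_t-2\alpha}z^{2\beta-\widehat S_t}$, is actually correct; the error sits one step earlier. The paper's own proof relies on the same projected presentation and has the same gap. Careful bookkeeping cannot rescue this route: you would need a correct description of $W_{i_t}$ in the coordinates $X,Y,Z$, and presumably a different matrix factorization. A quick sanity check would have caught this: a two-generated fractional ideal whose generator ratio lies in $R_{n,q}$ is principal.
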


\begin{proof} By the projected presentation obtained above,
$$W_{i_t}\cong \left(Y^{2\alpha-S_t}, XZ^{2\beta-\widehat S_t}\right)$$
as a fractional ideal of $R_{n,q}$. On the other hand,
$$\operatorname{coker}B_{i,j}^{(1)}\cong \left(xz^j, y^{2\alpha-i}\right)$$
as a fractional ideal of $\overline R_{n,q}$. Taking $i=S_t$ and $j=2\beta-\widehat S_t$ we obtain
$$\operatorname{coker}B^{(1)}_{S_t,\,2\beta-\widehat S_t}\cong \left(xz^{2\beta-\widehat S_t}, y^{2\alpha-S_t}\right).$$
By Lemma \ref{bounds-St}, these indices lie in the parameter range of the first family. Extending to $R_{n,q}$ and taking the reflexive hull gives
$$\nu^*\operatorname{coker}B^{(1)}_{S_t,\,2\beta-\widehat S_t}\cong \left(XZ^{2\beta-\widehat S_t}, Y^{2\alpha-S_t}\right)^{\vee\vee}.$$
By the projected presentation of $W_{i_t}$ obtained above, we have 
$$W_{i_t}\cong \left(Y^{2\alpha-S_t}, XZ^{2\beta-\widehat S_t}\right)^{\vee\vee}$$
and the result follows. \end{proof}

\noindent We now assume that $b_3=2$. Let $\nu$ be the largest integer such
that $b_3=b_4=\cdots=b_\nu=2$. Then the exceptional curves $E_3,\ldots,E_\nu$ occur with coefficient
$2$ in the fundamental cycle and therefore correspond, by Theorem \ref{thm-wunram}, to rank-two special CM modules. These modules cannot arise from the rank-one modules associated with the $2\times2$ matrix factorizations considered above. We therefore construct $4\times4$
matrix factorizations whose cokernels have rank two and compare their reflexive normalizations with the special CM modules corresponding to $E_3,\ldots,E_\nu$.

\section{Rank-two modules and $4\times4$-matrix factorizations for $\mathbb{D}_{n,q}$-singularities}\label{sect-dnq2}

Recall the defining equation $F$ of the projected hypersurface given by Theorem \ref{projected-hypersurface-D}. To simplify the notation, let us write
$$
F=y^N+x^{a_2-1}y^{2\alpha}z^H-xz^L
$$
where $N=2\alpha+2s_3$, $H=2\widehat s_3$ $L=2\beta+2\widehat s_3$. 

We first construct a family of $4\times4$ matrix factorizations by splitting each of the three monomials of $F$ into two monomial
factors. For integers
$$
1\leq r\leq N-1,\qquad 0\leq u\leq a_2-1,\qquad 0\leq v\leq2\alpha,\qquad 0\leq w\leq H,\qquad 1\leq\ell\leq L-1$$
define $\theta:=(r,u,v,w,\delta,\ell)$ with $\delta\in{0,1}$ and let
$$p_1=y^r,\qquad q_1=y^{N-r}, \quad p_2=x^uy^vz^w,\quad q_2=x^{a_2-1-u}y^{2\alpha-v}z^{H-w}$$
$$p_3=(-1)^\delta x^\delta z^\ell,\qquad q_3=(-1)^{1-\delta}x^{1-\delta}z^{L-\ell}.$$
Then we have  $p_1q_1+p_2q_2+p_3q_3=F$. Moreover, define 
$$
\Phi_\theta :=\begin{pmatrix}
p_1&-q_2&-q_3&0\\
p_2&q_1&0&-q_3\\
p_3&0&q_1&q_2\\
0&p_3&-p_2&p_1
\end{pmatrix}, \qquad \Psi_\theta :=\begin{pmatrix}
q_1&q_2&q_3&0\\
-p_2&p_1&0&q_3\\
-p_3&0&p_1&-q_2\\
0&-p_3&p_2&q_1
\end{pmatrix}.
$$

\begin{prop}\label{monomial-koszul-mf} For every admissible parameter $\theta$,
$\Phi_\theta\Psi_\theta =\Psi_\theta\Phi_\theta =F\textnormal{Id}_4$.  Hence $(\Phi_\theta,\Psi_\theta)$ is a $4\times4$ matrix
factorization of $F$.
\end{prop}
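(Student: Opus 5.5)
The plan is a direct verification that rests on the identity $p_1q_1+p_2q_2+p_3q_3=F$, followed by a short algebraic argument that gives the second product without a second computation.

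First I will confirm that every entry of $\Phi_\theta$ and $\Psi_\theta$ is a genuine polynomial. The admissible ranges $1\leq r\leq N-1$, $0\leq u\leq a_2-1$, $0\leq v\leq 2\alpha$, $0\leq w\leq H$, $1\leq\ell\leq L-1$ and $\delta\in\{0,1\}$ make all exponents in $p_i,q_i$ nonnegative. Next I check $p_1q_1+p_2q_2+p_3q_3=F$ term by term:
\[
p_1q_1=y^N,\qquad p_2q_2=x^{a_2-1}y^{2\alpha}z^H,\qquad p_3q_3=(-1)^{\delta}(-1)^{1-\delta}x\,z^{L}=-xz^L .
\]

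The main step is the product $\Phi_\theta\Psi_\theta$. I will compute the $(i,k)$ entry as the dot product of the $i$-th row of $\Phi_\theta$ with the $k$-th column of $\Psi_\theta$, using only that the $p_i,q_j$ commute.

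For the diagonal entries, each row of $\Phi_\theta$ paired with the matching column of $\Psi_\theta$ picks up exactly the three products $p_1q_1$, $p_2q_2$, $p_3q_3$, each with sign $+$. For example, row $1$ against column $1$ gives $p_1q_1+(-q_2)(-p_2)+(-q_3)(-p_3)$, and row $4$ against column $4$ gives $p_3q_3+(-p_2)(-q_2)+p_1q_1$. So every diagonal entry equals $F$.

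For the off-diagonal entries, each one is a sum of exactly two monomial terms that cancel in pairs. Examples are $p_1q_2-q_2p_1$, $-q_2q_3+q_3q_2$, $-p_3p_2+p_2p_3$ and $-q_1q_2+q_2q_1$. There are twelve such entries and all of them are routine sign bookkeeping. Carrying this out carefully is the only real work, and the one point where an error could occur is the signs of $p_3,q_3$. These are harmless, because $\delta$ enters only through the product $p_3q_3$ on the diagonal; off the diagonal, $p_3$ and $q_3$ appear linearly inside cancelling pairs. This yields $\Phi_\theta\Psi_\theta=F\,\mathrm{Id}_4$.

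Finally, I will deduce $\Psi_\theta\Phi_\theta=F\,\mathrm{Id}_4$ rather than recompute it. Since $F\neq 0$ in the domain $\mathbb C[x,y,z]$, the identity $\Phi_\theta\Psi_\theta=F\,\mathrm{Id}_4$ shows that $\Phi_\theta$ is invertible over the fraction field, with $\Psi_\theta=F\Phi_\theta^{-1}$. Hence $\Psi_\theta\Phi_\theta=F\Phi_\theta^{-1}\Phi_\theta=F\,\mathrm{Id}_4$. Alternatively, one can note that $\Psi_\theta$ has the same Koszul-type shape with the roles of $p$ and $q$ partly swapped, so the same cancellation pattern applies. Together these show that $(\Phi_\theta,\Psi_\theta)$ is a $4\times4$ matrix factorization of $F$ in the sense of Section \ref{sect-prem}.
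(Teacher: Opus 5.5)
Your proposal is correct and follows the paper's proof. Both arguments rest on the identity $p_1q_1+p_2q_2+p_3q_3=F$ and a direct entrywise multiplication of the Koszul matrices, which gives $F$ on the diagonal and cancelling pairs off it; the only difference is that you deduce $\Psi_\theta\Phi_\theta=F\,\mathrm{Id}_4$ from $\Phi_\theta\Psi_\theta=F\,\mathrm{Id}_4$ via invertibility of $\Phi_\theta$ over the fraction field rather than multiplying a second time, which is a valid shortcut.
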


\begin{proof}
This is the standard three-term Koszul matrix factorization associated to
$F=p_1q_1+p_2q_2+p_3q_3$. A direct multiplication gives
$\Phi_\theta\Psi_\theta =\Psi_\theta\Phi_\theta =(p_1q_1+p_2q_2+p_3q_3)\textnormal{Id}_4=F\textnormal{Id}_4$. 
\end{proof}

 Let $E_\theta=\operatorname{coker}(\Phi_\theta)$. Since $F$ is a non-zero-divisor in the regular ring $S$,
$E_\theta$ is an MCM $A$-module. We shall mainly use the minimal members of this family. Let
$\Theta_{\min}$ denote the set of parameters $\theta$ for which all six monomials
$p_1, q_1, p_2,q_2, p_3, q_3$ belong to the maximal ideal
$\mathfrak m=(x,y,z)\subseteq A$. Equivalently, we have $1\leq r\leq N-1$ and $1\leq\ell\leq L-1$, and neither $p_2$ nor $q_2$ is a unit.

\begin{prop}\label{minimal-koszul-rank-two}
Let $\theta\in\Theta_{\min}$. Then $E_\theta$ is a rank-two MCM $A$-module minimally generated by four elements.
\end{prop}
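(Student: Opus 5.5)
Here $A$ denotes the local hypersurface ring $\overline R_{n,q}=S/(F)$ with $S=\mathbb C[x,y,z]_{(x,y,z)}$. The claim has three parts: $E_\theta$ is MCM, it is minimally generated by four elements, and it has rank two. I will treat them in that order.

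\textbf{MCM and minimal generation.} By Proposition \ref{monomial-koszul-mf}, $(\Phi_\theta,\Psi_\theta)$ is a matrix factorization of $F$. Eisenbud's correspondence (Corollary \ref{eisenbud}) then gives that $E_\theta=\operatorname{coker}(\Phi_\theta)$ is an MCM $A$-module with the $2$-periodic resolution built from $\Phi_\theta$ and $\Psi_\theta$.

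For $\theta\in\Theta_{\min}$, every nonzero entry of $\Phi_\theta$ and $\Psi_\theta$ is $\pm$ one of $p_1,q_1,p_2,q_2,p_3,q_3$, so all entries lie in $\mathfrak m$. Hence $\Phi_\theta\otimes A/\mathfrak m=0$, and $E_\theta/\mathfrak m E_\theta\cong (A/\mathfrak m)^4$. By Nakayama, $\mu_A(E_\theta)=4$, and the factorization is reduced.

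\textbf{Rank.} I would compute $\det\Phi_\theta$ in $S$. The pair $(\Phi_\theta,\Psi_\theta)$ is the standard Koszul factorization, i.e.\ Clifford multiplication on the exterior algebra $\Lambda^\bullet S^3$, split into even part $\Lambda^0\oplus\Lambda^2$ and odd part $\Lambda^1\oplus\Lambda^3$, each of rank $4$. The known result is $\det\Phi_\theta=\det\Psi_\theta=F^2$ up to sign.

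To check this directly I have two options:
\begin{enumerate}
\item Use $\det\Phi_\theta\det\Psi_\theta=F^4$ together with the fact that $\Psi_\theta$ is obtained from $\Phi_\theta$ by a signed permutation/transpose symmetry, which forces equal determinants up to sign.
\item Specialize: setting $p_2=q_2=p_3=q_3=0$ gives $\det=(p_1q_1)^2$, which pins down the correct power of $F$ among the possibilities $uF^k$.
\end{enumerate}

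Since $F$ is irreducible ($A$ is a domain by Theorem \ref{projected-hypersurface-D}), the preliminaries give $\det\Phi_\theta=uF^k$. The rank of $\operatorname{coker}(\Phi_\theta)$ is then $k$, and $k=2$.

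The main obstacle is establishing $k=2$ rigorously rather than by the specialization heuristic. Specialization is legitimate only if I argue that $k$ is determined by the degree of $\det\Phi_\theta$ as a polynomial in the independent symbols $p_i,q_i$. The plan is to view $\Phi_\theta$ over the generic ring $\mathbb Z[P_i,Q_i]$ with $G=\sum P_iQ_i$. There $\det\Phi=\pm G^2$, by the total degree count ($\deg\det=8=2\deg G$) plus irreducibility of $G$. Then substitute the monomials $p_i,q_i$.

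As an alternative for the rank, I can localize at the generic point of $A$. Over the fraction field $K$, $\Phi_\theta\Psi_\theta=0$ and $\Psi_\theta\Phi_\theta=0$, so $\operatorname{rank}\Phi_\theta+\operatorname{rank}\Psi_\theta\le 4$. By symmetry both ranks are at most $2$. Exhibiting a nonzero $2\times2$ minor, e.g.\ $p_1q_1+p_2q_2$, which is nonzero in $A$ because $F\nmid$ it, shows both ranks equal $2$. Hence $\operatorname{rank}E_\theta=4-2=2$.
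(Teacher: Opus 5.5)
Your proof is correct. The MCM and minimal-generation parts match the paper (all entries of $\Phi_\theta$ lie in $\mathfrak m$, then Nakayama), but your rank argument takes a different route. The paper bounds $\operatorname{rank}_K\overline\Phi_\theta\le 2$ by asserting $\operatorname{adj}(\Phi_\theta)=F\Psi_\theta$, so that every $3\times3$ minor vanishes in $A$. It then gets the lower bound from the minor $p_1q_1+p_2q_2=-p_3q_3\neq0$. That adjugate identity is equivalent to $\det\Phi_\theta=F^2$, which the paper never verifies. Your main route supplies exactly this fact. You work with the universal Koszul matrix over $\mathbb Z[P_i,Q_i]$: there $\det\Phi$ divides $G^4$ with $G$ prime, homogeneity fixes the exponent, and the specialization $P_2=Q_2=P_3=Q_3=0$ shows $\det\Phi=G^2$. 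Substituting the monomials then gives $\det\Phi_\theta=F^2$, and you read off the rank from the $\det=uF^k$ formula in the preliminaries. Your alternative is the most economical of the three arguments. Over $K$, the relation $\overline\Phi_\theta\overline\Psi_\theta=0$ gives $\operatorname{rank}\overline\Phi_\theta+\operatorname{rank}\overline\Psi_\theta\le 4$ with no determinant computation at all.

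Two small corrections are needed. First, the degree count should read $\deg\det\Phi=4=2\deg G$, not $8$. Second, ``by symmetry'' is both unnecessary and not literally available over $A$: the symmetry $\Psi=\sigma(\Phi)^{T}$ with $\sigma\colon P_i\leftrightarrow Q_i$ exists only generically. Instead, observe that the top-left $2\times2$ minor of $\Psi_\theta$ is also $p_1q_1+p_2q_2$. Hence both ranks are at least $2$, and the sum bound forces both to equal $2$.
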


\begin{proof}
Since every entry of $\Phi_\theta$ and $\Psi_\theta$ belongs to $\mathfrak m$, the corresponding $2$-periodic resolution is minimal.
Hence $\mu_A(E_\theta)=4$. It remains to compute the rank.  Moreover,
$\operatorname{adj}(\Phi_\theta)=F\Psi_\theta $. Thus every $3\times3$ minor of $\Phi_\theta$ is divisible by $F$, and
hence every $3\times3$ minor of the reduction $\overline\Phi_\theta$ vanishes in $A$. Therefore
$\operatorname{rank}_K(\overline\Phi_\theta)\leq 2$ where $K=\operatorname{Frac}(A)$. On the other hand, the minor obtained from the first two rows and the first two columns is
$$\det\begin{pmatrix}
p_1&-q_2\\
p_2&q_1
\end{pmatrix}=p_1q_1+p_2q_2=-p_3q_3$$
in $A$. Since $A$ is a domain and $p_3q_3\neq0$, this minor is
nonzero. Hence $\operatorname{rank}_K(\overline\Phi_\theta)=2$. Hence we obtain $\operatorname{rank}_A(E_\theta)=4-2=2$. \end{proof}

\noindent We next compare these modules with modules over the normalization. Let
$\iota\colon A\hookrightarrow B$ be the normalization map, and write
$P_i=\iota(p_i), Q_i=\iota(q_i)$. Then we have $P_1Q_1+P_2Q_2+P_3Q_3=0$ in $B$. Let $\Phi_{\theta,B}$ and $\Psi_{\theta,B}$ denote the matrices obtained from $\Phi_\theta$ and $\Psi_\theta$ by applying
$\iota$ entrywise. Consider the first two rows of $\Psi_{\theta,B}$:
$$
T_{\theta,B}=\begin{pmatrix}
Q_1&Q_2&Q_3&0\\
-P_2&P_1&0&Q_3
\end{pmatrix},
$$
and put $L_{\theta,B}=\operatorname{im}(T_{\theta,B})\subseteq B^2$. Equivalently,
$$L_{\theta,B}=B\binom{Q_1}{-P_2}+B\binom{Q_2}{P_1}+B\binom{Q_3}{0}+B\binom{0}{Q_3}.$$

\begin{prop}\label{lattice-general-koszul}
For every $\theta\in\Theta_{min}$, we have 
$$(E_\theta\otimes_A B)^{\vee\vee}\cong L_{\theta,B}^{\vee\vee}.$$
\end{prop}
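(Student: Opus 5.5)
We have to show $(E_\theta\otimes_A B)^{\vee\vee}\cong L_{\theta,B}^{\vee\vee}$. The plan is to do over $A$ what Proposition \ref{alternativeideals-D} did for $2\times2$ factorizations: realize $E_\theta$ as the image of part of $\Psi_\theta$, and then pass to $B$ exactly as in Proposition \ref{normalization-MF-D}.

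\textbf{Step 1: build a surjection over $A$.} Let $T_\theta$ be the first two rows of $\Psi_\theta$, reduced modulo $F$. Since $\Psi_\theta\Phi_\theta=F\,\mathrm{Id}_4$, we have $T_\theta\Phi_\theta=0$ over $A$. Hence $T_\theta\colon A^4\to A^2$ factors through a surjection
$$\pi\colon E_\theta=\operatorname{coker}(\Phi_\theta)\longrightarrow L_\theta:=\operatorname{im}(T_\theta)\subseteq A^2.$$

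\textbf{Step 2: show $\pi$ is an isomorphism.} First I would check that $T_\theta$ has rank $2$ over $K$, using the minor
$$\det\begin{pmatrix}Q_1&Q_3\\-P_2&0\end{pmatrix}=P_2Q_3.$$
This is nonzero because $A$ is a domain and the monomials $p_2$ and $q_3$ are not divisible by $F$. By Proposition \ref{minimal-koszul-rank-two}, $E_\theta$ also has rank $2$. So $\ker\pi$ has rank zero, i.e.\ it is torsion. Since $E_\theta$ is MCM over the domain $A$, it is torsion-free, so $\ker\pi=0$ and $E_\theta\cong L_\theta$.

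\textbf{Step 3: pass to $B$.} Tensoring with $B$ gives $E_\theta\otimes_A B\cong L_\theta\otimes_A B$. The inclusion $L_\theta\subseteq A^2$ induces a natural map
$$L_\theta\otimes_A B\longrightarrow B^2,$$
whose image is exactly $L_{\theta,B}$, because it is generated by the images of the columns of $T_\theta$. Since $A$ and $B$ have the same fraction field $K$ (Theorem \ref{projected-hypersurface-D}), we have $L_\theta\otimes_A K\cong K^2$. So this map becomes an isomorphism onto $L_{\theta,B}\otimes_B K$ after tensoring with $K$. Its kernel is therefore torsion.

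\textbf{Step 4: take reflexive hulls.} The dual $(-)^\vee$ kills torsion, and a map of $B$-modules that is an isomorphism modulo torsion, with the same rank, induces an isomorphism on duals. This is the same argument as in Proposition \ref{normalization-MF-D}. Hence
$$(E_\theta\otimes_A B)^{\vee\vee}\cong L_{\theta,B}^{\vee\vee}.$$

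The main obstacle is Step 2. It needs both the rank computation for $T_\theta$ and the torsion-freeness of $E_\theta$, and for the latter I am relying on Eisenbud's correspondence (Corollary \ref{eisenbud}) to give the MCM property. The minimality condition $\theta\in\Theta_{\min}$ is not really needed here; it only guarantees that $E_\theta$ has the stated rank and number of generators.
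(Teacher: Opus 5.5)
Your proof is correct and uses the same mechanism as the paper's: the first two rows of $\Psi_\theta$ give a surjection onto a rank-two lattice, a rank count makes the kernel torsion, and dualizing kills torsion (your minor $P_2Q_3$ works just as well as the paper's $P_1Q_1+P_2Q_2=-P_3Q_3$). The paper base-changes first and runs this argument once, on $\operatorname{coker}(\Phi_{\theta,B})\to L_{\theta,B}$ over $B$, so your Step 2 is a valid but unnecessary detour: it additionally needs torsion-freeness of the MCM module $E_\theta$, and in return gives the by-product $E_\theta\cong\operatorname{im}(T_\theta)$ over $A$.
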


\begin{proof} Since $\Psi_{\theta,B}\Phi_{\theta,B}=0$ the matrix $T_{\theta,B}$ satisfies
$T_{\theta,B}\Phi_{\theta,B}=0$. Hence it induces a surjective homomorphism
$$
\eta_\theta: \operatorname{coker}(\Phi_{\theta,B})\longrightarrow L_{\theta,B}.$$
The module $\operatorname{coker}(\Phi_{\theta,B})\cong E_\theta\otimes_A B$ has rank two. The matrix $T_{\theta,B}$ also has generic rank two. Indeed, the determinant of its first two columns is $P_1Q_1+P_2Q_2=-P_3Q_3$ which is nonzero in the domain $B$. Thus $L_{\theta,B}$ has rank two. It follows that $\eta_\theta$ becomes an isomorphism after tensoring
with $\operatorname{Frac}(B)$. Therefore its kernel is torsion. Conversely, since $L_{\theta,B}\subseteq B^2$ is torsion-free, every
torsion element of $\operatorname{coker}(\Phi_{\theta,B})$ belongs to
$\ker(\eta_\theta)$. Hence
$$\ker(\eta_\theta)=\operatorname{tor}\bigl(\operatorname{coker}(\Phi_{\theta,B})\bigr)$$
and we get 
$$(\operatorname{coker}(\Phi_{\theta,B}))^{\vee\vee}\cong L_{\theta,B}^{\vee\vee}$$
Since $\operatorname{coker}(\Phi_{\theta,B})\cong E_\theta\otimes_A B$ the result follows. \end{proof}

\begin{cor}\label{koszul-special-test} Let $U$ be an indecomposable rank-two special CM $B$-module. If, for
some $\theta\in\Theta_{min}$, $L_{\theta,B}^{\vee\vee}\cong U$ then
$$(E_\theta\otimes_A B)^{\vee\vee}\cong U.$$
\end{cor}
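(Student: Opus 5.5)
This corollary follows directly from Proposition \ref{lattice-general-koszul}, composed with the hypothesis. Fix $\theta\in\Theta_{\min}$ with $L_{\theta,B}^{\vee\vee}\cong U$. Proposition \ref{lattice-general-koszul} gives an isomorphism $(E_\theta\otimes_A B)^{\vee\vee}\cong L_{\theta,B}^{\vee\vee}$. Composing it with the assumed isomorphism $L_{\theta,B}^{\vee\vee}\cong U$ yields $(E_\theta\otimes_A B)^{\vee\vee}\cong U$, which is the claim.

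Two bookkeeping points should be recorded so the statement is not vacuous and is consistent with the rest of the section.

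First, both sides are rank-two reflexive $B$-modules, so the comparison is meaningful. Proposition \ref{minimal-koszul-rank-two} gives $\operatorname{rank}_A E_\theta=2$. Since $A\hookrightarrow B$ is birational, $E_\theta\otimes_A B$ also has rank two over $\operatorname{Frac}(B)$. The proof of Proposition \ref{lattice-general-koszul} shows that $L_{\theta,B}$ has rank two. Since $U$ is rank two by hypothesis, no rank mismatch can occur.

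Second, the conclusion is that $U$ is the reflexive hull of a scalar extension of $E_\theta$, and being special is an intrinsic property of $U$. Since $B$ is normal of dimension two, the reflexive hull of a finitely generated torsion-free module is MCM, so $(E_\theta\otimes_A B)^{\vee\vee}$ is an MCM $B$-module. If desired, one can add that this module is special and indecomposable precisely because $U$ is, by transport along the isomorphism using the criterion $\operatorname{Ext}^1_B(-,B)=0$ of \cite{IyamaWemyss}.

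There is no real obstacle here; the substantive work was done in Proposition \ref{lattice-general-koszul}, where the kernel of $\eta_\theta$ was identified with the torsion submodule. The only point to check is that taking double duals is functorial and preserves isomorphisms. This holds because the operation $(-)^{\vee\vee}=\operatorname{Hom}_B(\operatorname{Hom}_B(-,B),B)$ is a functor.
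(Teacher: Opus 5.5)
Your proposal is correct and is exactly the paper's argument: the corollary is the composition of the isomorphism $(E_\theta\otimes_A B)^{\vee\vee}\cong L_{\theta,B}^{\vee\vee}$ from Proposition~\ref{lattice-general-koszul} with the assumed isomorphism $L_{\theta,B}^{\vee\vee}\cong U$. Your bookkeeping remarks on ranks, on the MCM property of reflexive hulls, and on transporting speciality are accurate but unnecessary, since the hypotheses that $U$ is indecomposable, rank two and special play no role in the deduction.
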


\noindent In the following, we study the rank-two special modules of the dihedral quotient singularity. Assume that $b_3=2$, and let $\nu$ be the largest integer such that $b_3=b_4=\cdots=b_{\nu+2}=2$. Then there are precisely $\nu$ rank-two special CM modules. We denote them by $U_1,\ldots, U_\nu $. In the notation of Iyama--Wemyss, they are
$$U_s=V_{i_{\nu+1}+(\nu-s)(n-q)}, \qquad 1\leq s\leq\nu $$
where $V_t=(\mathbb C[u,v]\otimes\rho_t)^{\mathbb D_{n,q}}$. Thus
$U_\nu=V_{i_{\nu+1}}$ and the rank-two modules occur consecutively along the coefficient-$2$
part of the long branch.

\noindent The preceding construction reduces the comparison with the projected hypersurface to an explicit lattice problem. Namely, one seeks $4\times4$ matrix factorizations of $F$ whose associated rank-two
lattices have reflexive hulls isomorphic to the modules $U_1,\ldots,U_\nu$. The following example illustrates both the direct
Koszul construction and the diagonal modification that is needed when the desired lattice does not arise directly from the monomial Koszul family.

\begin{ex}\label{D75-example}
Consider the dihedral quotient singularity $\mathbb D_{7,5}$. We have
$\frac75=[2,2,3]$, so there are two coefficient-$2$ vertices on the long branch. The
corresponding rank-two special CM modules are $U_1=V_3, U_2=V_1$. The remaining three vertices correspond to the rank-one special modules $W_+$, $W_-$ and $W_1$, which are obtained from the
$2\times2$ matrix factorizations constructed above. For this singularity,
$$
a_2=4,\qquad \alpha=1,\qquad s_3=1,\qquad \widehat s_3=0,\qquad \beta=1.$$
Hence $N=4$, $H=0$, $L=2$ and the projected hypersurface is $A=\mathbb C[X,Y,Z]/(F)$ with $F=Y^4+X^3Y^2-XZ^2$. We first describe the normalization explicitly. Put
$$a=uv,\qquad D=u^{10}-v^{10},\qquad S=u^{10}+v^{10}.$$
Up to multiplication of the generators by nonzero constants, the normalization map $A\hookrightarrow B$ is given by
$$
X=a^4,\qquad Y=\frac12aD,\qquad Z=\frac14SD.$$
Indeed, since $S^2-D^2=4a^{10}$ we have
\begin{align*}
Y^4+X^3Y^2-XZ^2&=\frac1{16}a^4D^4+\frac14a^{14}D^2-\frac1{16}a^4S^2D^2\\
&=\frac1{16}a^4D^2(D^2+4a^{10}-S^2)=0.
\end{align*}

\subsection{The module $U_1=V_3$} Consider $\theta_1=(2,1,1,0,0,1)$. The three monomials of $F$ are split as
$Y^4=(Y^2)(Y^2)$, $X^3Y^2=(XY)(X^2Y)$ and $-XZ^2=(-Z)(XZ)$. Thus
$$p_1=Y^2,\qquad q_1=Y^2, \qquad p_2=XY,\qquad q_2=X^2Y, \qquad p_3=-Z,\qquad q_3=XZ.$$
The associated three-term Koszul matrix factorization is
$$
\Phi_1=
\begin{pmatrix}
Y^2&-X^2Y&-XZ&0\\
XY&Y^2&0&-XZ\\
-Z&0&Y^2&X^2Y\\
0&-Z&-XY&Y^2
\end{pmatrix}, \qquad \Psi_1=
\begin{pmatrix}
Y^2&X^2Y&XZ&0\\
-XY&Y^2&0&XZ\\
Z&0&Y^2&-X^2Y\\
0&Z&XY&Y^2
\end{pmatrix}.
$$
Hence $\Phi_1\Psi_1=\Psi_1\Phi_1=F\textnormal{Id}_4$. Put $E_1=\operatorname{coker}(\Phi_1)$. By Proposition \ref{minimal-koszul-rank-two}, $E_1$ is a rank-two MCM $A$-module minimally generated by four elements. We now identify its reflexive normalization. Four covariant generators of $V_3$ may be chosen as
$$
h_1=\binom{u^3}{v^3},\qquad h_2=\binom{v^7}{-u^7},\qquad h_3=\binom{u^5v^2}{-u^2v^5},\qquad
h_4=\binom{u^2v^9}{u^9v^2}.$$
Writing these generators in a basis of the generic fibre and using the expressions for $X,Y,Z$ above, one obtains, after a change of
basis in $K^2$, multiplication by an element of $K^\times$, a permutation of the generators, and multiplication of generators by
nonzero constants, the fractional lattice
$$
V_3\cong \operatorname{im}\begin{pmatrix}
X^2Y&-XZ&Y^2&0\\
-Z&XY&0&Y^2
\end{pmatrix}.
$$
This is, up to a simultaneous permutation of the three pairs and multiplication of the factors by nonzero constants, the Koszul
lattice associated to the splitting above. Therefore Proposition \ref{lattice-general-koszul} gives
$(E_1\otimes_A B)^{\vee\vee}\cong V_3=U_1$.

\subsection{The module $U_2=V_1$} For the second rank-two special module a diagonal modification is
needed. Four covariant generators of $V_1$ may be chosen as
$$
g_1=\binom uv,\qquad g_2=\binom{u^3v^2}{-u^2v^3},\qquad g_3=\binom{v^9}{u^9},\qquad
g_4=\binom{u^2v^{11}}{-u^{11}v^2}.$$
Take $g_1,g_2$ as a basis of the generic fibre. Since $\textnormal{det}(g_1,g_2)=-2u^3v^3=-2a^3$ a direct calculation gives
$$
g_3=\frac{S}{2a}g_1-\frac{D}{2a^3}g_2,\qquad g_4=-\frac{aD}{2}g_1+\frac{S}{2a}g_2$$
Since
$$
\frac{S}{2a}=\frac ZY,\qquad \frac{D}{2a^3}=\frac YX,\qquad \frac{aD}{2}=Y$$
we obtain
$$
V_1\cong \operatorname{im}\begin{pmatrix}
1&0&\dfrac ZY&-Y\\
0&1&-\dfrac YX&\dfrac ZY
\end{pmatrix}$$
Multiplying this fractional lattice by $XY\in K^\times$, and then permuting and rescaling the generators, gives
$$
V_1\cong \operatorname{im} \begin{pmatrix}
-XZ&XY^2&-XY&0\\
-Y^2&XZ&0&-XY
\end{pmatrix}$$
This matrix has three-term Koszul form $\begin{pmatrix}
Q_1&Q_2&Q_3&0\\
-P_2&P_1&0&Q_3
\end{pmatrix}$ with
$$
P_1=XZ,\qquad Q_1=-XZ,\quad P_2=Y^2,\qquad Q_2=XY^2, \qquad P_3=-X^3Y,\qquad Q_3=-XY$$
The corresponding three products satisfy
$$P_1Q_1+P_2Q_2+P_3Q_3=-X^2Z^2+XY^4+X^4Y^2=X(Y^4+X^3Y^2-XZ^2)=XF$$
Thus these data determine a monomial Koszul matrix factorization $(\widetilde\Phi_2,\widetilde\Psi_2)$ of $XF$.
We now cancel the additional factor $X$. Put $D_X=\operatorname{diag}(1,X,1,X)$ and define
$$
\Phi_2=\widetilde\Phi_2D_X^{-1},\qquad \Psi_2=D_X\frac{\widetilde\Psi_2}{X}$$
The resulting matrices have polynomial entries and are given by
$$
\Phi_2=\begin{pmatrix}
XZ&-Y^2&XY&0\\
Y^2&-Z&0&Y\\
-X^3Y&0&-XZ&Y^2\\
0&-X^2Y&-Y^2&Z
\end{pmatrix},\qquad \Psi_2=\begin{pmatrix}
-Z&Y^2&-Y&0\\
-Y^2&XZ&0&-XY\\
X^2Y&0&Z&-Y^2\\
0&X^3Y&Y^2&-XZ
\end{pmatrix}$$
By construction, $\Phi_2\Psi_2=\Psi_2\Phi_2=FI_4$. Hence $(\Phi_2,\Psi_2)$ is a $4\times4$ matrix factorization of the
original projected equation $F$. Put $E_2=\operatorname{coker}(\Phi_2)$. To identify its reflexive normalization, consider the first two rows of $\Psi_{2,B}$:
$$T_2=\begin{pmatrix}
-Z&Y^2&-Y&0\\
-Y^2&XZ&0&-XY
\end{pmatrix}$$
and let $L_2=\operatorname{im}(T_2)\subseteq B^2$. 
Since $T_2\Phi_{2,B}=0$ there is a surjective homomorphism
$\operatorname{coker}(\Phi_{2,B})\longrightarrow L_2$. Both modules have rank two, so this map becomes an isomorphism after
tensoring with $K$. Its kernel is therefore torsion. Since $L_2$ is torsion-free, it follows that
$$(\operatorname{coker}(\Phi_{2,B}))^{\vee\vee}\cong L_2^{\vee\vee}$$
Hence $(E_2\otimes_A B)^{\vee\vee}\cong L_2^{\vee\vee}$. Finally, we have 
$$\begin{pmatrix}X&0\\
0&1
\end{pmatrix}
T_2=\begin{pmatrix}
-XZ&XY^2&-XY&0\\
-Y^2&XZ&0&-XY
\end{pmatrix}.
$$
Since $\begin{pmatrix}
X&0\\
0&1
\end{pmatrix} \in\operatorname{GL}_2(K)$ the fractional lattices $L_2$ and $V_1$ are isomorphic. As $V_1$ is
reflexive, $L_2^{\vee\vee}\cong V_1$. So we get 
$$(E_2\otimes_A B)^{\vee\vee}\cong V_1=U_2$$
Thus both rank-two special CM modules of $\mathbb D_{7,5}$ are recovered from $4\times4$ matrix factorizations of the projected
hypersurface:
$$(E_1\otimes_A B)^{\vee\vee}\cong U_1=V_3, \qquad (E_2\otimes_A B)^{\vee\vee}\cong U_2=V_1$$
The first arises directly from a monomial three-term Koszul factorization of $F$, while the second is obtained from a monomial
Koszul factorization of $XF$ by cancelling the additional factor $X$ through a diagonal modification.
\end{ex}

\noindent The example shows that rank-two special CM modules may arise in two different ways from the projected hypersurface. The module $U_1=V_3$ is obtained directly from a monomial three-term Koszul
matrix factorization of $F$, whereas $U_2=V_1$ is obtained from a Koszul factorization of the monomial multiple $XF$ by a diagonal
modification. In both cases, extension to the normalization followed by reflexive hull recovers the corresponding special CM module.

\begin{defn}\label{diagonal-cancellation}
Let $m$ be a monomial and let $(\widetilde\Phi,\widetilde\Psi)$ be a $4\times4$ matrix factorization
of $mF$, so that
$$
\widetilde\Phi\widetilde\Psi =\widetilde\Psi\widetilde\Phi =mF\textnormal{Id}_4$$
Let $D=\operatorname{diag}(d_1,d_2,d_3,d_4)$ be a diagonal matrix with monomial entries. Suppose that
$$
\Phi=\widetilde\Phi D^{-1}\qquad\text{and}\qquad \Psi=D\frac{\widetilde\Psi}{m}$$
have entries in $S$. We call $(\Phi,\Psi)$ the diagonal cancellation of $(\widetilde\Phi,\widetilde\Psi)$ with respect to $D$.
\end{defn}

\begin{prop}\label{diagonal-cancellation-mf} With the notation above, $(\Phi,\Psi)$ is a $4\times4$ matrix
factorization of $F$.
\end{prop}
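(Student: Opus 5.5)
The plan is to verify the two identities $\Phi\Psi=F\,\mathrm{Id}_4$ and $\Psi\Phi=F\,\mathrm{Id}_4$ by a direct computation. The computation takes place in the fraction field $\operatorname{Frac}(S)$, or more simply in the localization $S_{m\cdot d_1d_2d_3d_4}$, where $D$ is invertible and division by $m$ makes sense. The hypothesis that $\Phi$ and $\Psi$ have entries in $S$ then ensures that the resulting pair is a genuine matrix factorization over $S$.

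First I will compute
$$\Phi\Psi=\widetilde\Phi D^{-1}D\frac{\widetilde\Psi}{m}=\frac{1}{m}\widetilde\Phi\widetilde\Psi=\frac{1}{m}\,mF\,\mathrm{Id}_4=F\,\mathrm{Id}_4 .$$
This uses only associativity of matrix multiplication over the commutative ring $\operatorname{Frac}(S)$, and the fact that the scalar $1/m$ commutes with all matrices.

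Next I will compute
$$\Psi\Phi=D\frac{\widetilde\Psi}{m}\widetilde\Phi D^{-1}=\frac{1}{m}D(\widetilde\Psi\widetilde\Phi)D^{-1}=\frac1m D(mF\,\mathrm{Id}_4)D^{-1}=F\,DD^{-1}=F\,\mathrm{Id}_4 .$$
Here the scalar matrix $mF\,\mathrm{Id}_4$ is central, so it commutes past $D$.

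Both identities hold in $\operatorname{Frac}(S)$. Since $S\subset\operatorname{Frac}(S)$ is injective and all entries of $\Phi,\Psi$ lie in $S$ by hypothesis, the identities hold in $S$. Hence $(\Phi,\Psi)$ is a $4\times4$ matrix factorization of $F$ in the sense of Section \ref{sect-prem}, with $M_0=M_1=S^4$.

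I do not expect a real obstacle. The only points needing care are that $m\neq0$ and the $d_i\neq0$, which holds because they are monomials, so the inverses exist in $\operatorname{Frac}(S)$. One should also note that integrality of the entries is an assumption of Definition \ref{diagonal-cancellation}, not something to be proved. If one also wanted reducedness, one would separately check that the entries lie in $(x,y,z)$, but that is not part of the statement.
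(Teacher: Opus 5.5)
Your proposal is correct and follows essentially the same approach as the paper. The paper makes the same two computations, $\Phi\Psi=\frac1m\widetilde\Phi\widetilde\Psi=F\,\mathrm{Id}_4$ and $\Psi\Phi=D(F\,\mathrm{Id}_4)D^{-1}=F\,\mathrm{Id}_4$; you add only the extra care of doing them in $\operatorname{Frac}(S)$ before using integrality of the entries.
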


\begin{proof} By construction,
$$
\Phi\Psi = \widetilde\Phi D^{-1}D\frac{\widetilde\Psi}{m} =\frac{1}{m}\widetilde\Phi\widetilde\Psi =F\textnormal{Id}_4$$
Similarly,
$$
\Psi\Phi =D\frac{\widetilde\Psi}{m}\widetilde\Phi D^{-1}=D(FI_4)D^{-1}=F\textnormal{Id}_4$$
Hence $(\Phi,\Psi)$ is a matrix factorization of $F$.
\end{proof}

\noindent Diagonal cancellation need not preserve the lattice associated to the original Koszul factorization: the diagonal entries
of $D$ need not be units along the height-one divisors of $B$. So, after cancellation the normalized cokernel must be
identified from the resulting factorization itself. The following observation provides the required comparison.

\begin{lem}\label{birational-lattice-double-dual}
Let $B$ be a normal domain with fraction field $K$. Let $M$ be a finitely generated $B$-module of rank $r$, and let
$L\subseteq K^r$ be a torsion-free $B$-module of rank $r$. Suppose that there is a surjective homomorphism
$\eta: M\longrightarrow L $ which becomes an isomorphism after tensoring with $K$. Then
$M^{\vee\vee}\cong L^{\vee\vee}$.
\end{lem}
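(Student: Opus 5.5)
The plan is to factor $\eta$ through the torsion quotient and use that $(-)^{\vee\vee}$ only sees the module modulo torsion. Put $T=\ker(\eta)$. Since $\eta\otimes_B K$ is an isomorphism and localization is exact, $T\otimes_B K=0$, so $T$ is a torsion module. On the other hand $L\subseteq K^r$ is torsion-free, so every torsion element of $M$ maps to zero in $L$. Hence $T=\operatorname{tor}(M)$, and $\eta$ induces an isomorphism $M/\operatorname{tor}(M)\cong L$. This is the argument already used in the proof of Proposition \ref{lattice-general-koszul}.

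Next I would check that $M^{\vee}=\operatorname{Hom}_B(M,B)$ does not change when $M$ is replaced by $M/\operatorname{tor}(M)$. Apply $\operatorname{Hom}_B(-,B)$ to
$$0\to\operatorname{tor}(M)\to M\to M/\operatorname{tor}(M)\to 0.$$
This gives the left exact sequence
$$0\to (M/\operatorname{tor}(M))^\vee\to M^\vee\to \operatorname{tor}(M)^\vee.$$
Any homomorphism from a torsion module to the domain $B$ is zero, because the image of a torsion element is a torsion element of $B$. So $\operatorname{tor}(M)^\vee=0$, and therefore $M^\vee\cong (M/\operatorname{tor}(M))^\vee\cong L^\vee$. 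Dualizing once more gives $M^{\vee\vee}\cong L^{\vee\vee}$.

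The only point needing care is naturality: the isomorphism $M^\vee\cong L^\vee$ should be $\eta^\vee$ itself, so that $M^{\vee\vee}\cong L^{\vee\vee}$ is induced by $\eta^{\vee\vee}$ and is compatible with the canonical maps $M\to M^{\vee\vee}$ and $L\to L^{\vee\vee}$. This follows because $\eta$ is the composite of the projection $M\to M/\operatorname{tor}(M)$ with the isomorphism $M/\operatorname{tor}(M)\cong L$.

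Normality of $B$ is not needed for the isomorphism itself. It is needed only to interpret the double dual as the reflexive hull and to identify it with the rank-$r$ reflexive lattice $L^{\vee\vee}\subseteq K^r$, which is how the lemma is used in the paper. I expect no real obstacle here; the one step to state carefully is that $\ker\eta$ is exactly the torsion submodule, which needs both hypotheses (the generic isomorphism and the torsion-freeness of $L$).
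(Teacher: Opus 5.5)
Your proposal is correct and follows essentially the same route as the paper: you identify $\ker\eta$ with $\operatorname{tor}(M)$ using the generic isomorphism and the torsion-freeness of $L$, then use that torsion modules have zero $B$-dual to get $M^\vee\cong L^\vee$, and dualize again. Your additional details (the left exact $\operatorname{Hom}$ sequence, the naturality via $\eta^{\vee\vee}$, and the observation that only the domain property of $B$, not normality, is used) are accurate refinements of that same argument.
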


\begin{proof} Since $\eta$ becomes an isomorphism after tensoring with $K$, its
kernel is torsion. Conversely, since $L$ is torsion-free, every torsion element of $M$ belongs to $ker(\eta)$. Hence
$ker(\eta)=\operatorname{tor}(M)$ and therefore $M/\operatorname{tor}(M)\cong L$. Since every homomorphism from a torsion $B$-module to the domain $B$ is zero,
$$ 
M^\vee \cong \bigl(M/\operatorname{tor}(M)\bigr)^\vee \cong L^\vee $$
Taking duals once more gives $M^{\vee\vee}\cong L^{\vee\vee}$. 
\end{proof}

\begin{prop}\label{general-rank-two-lattice-comparison} Let $(\Phi,\Psi)$ be a $4\times4$ matrix factorization of $F$, and
put $E=\operatorname{coker}(\Phi)$. Assume that $E$ has rank two. Let $T$ be a $2\times4$ matrix over
$B$ such that $T\Phi_B=0$ and assume that $T$ has generic rank two. If
$L=\operatorname{im}(T)\subseteq B^2$ then $(E\otimes_A B)^{\vee\vee}\cong L^{\vee\vee}$. In particular, if $L^{\vee\vee}\cong U$ for a rank-two special CM $B$-module $U$, then
$(E\otimes_A B)^{\vee\vee}\cong U$.

\end{prop}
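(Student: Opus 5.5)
The plan is to reduce the statement to Lemma \ref{birational-lattice-double-dual}, applied to $M=E\otimes_A B$ and the lattice $L=\operatorname{im}(T)$. The argument follows the proof of Proposition \ref{lattice-general-koszul}, with the Koszul shape of the matrices replaced by the two abstract hypotheses on $T$.

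First, right exactness of the tensor product gives $E\otimes_A B\cong\operatorname{coker}(\Phi_B)$, where $\Phi_B$ is $\Phi$ with $\iota$ applied entrywise. Because $T\Phi_B=0$, the map $B^4\xrightarrow{T}B^2$ vanishes on $\operatorname{im}(\Phi_B)$. It therefore factors through a homomorphism
$$\eta\colon \operatorname{coker}(\Phi_B)\longrightarrow L=\operatorname{im}(T),$$
which is surjective by construction. Since $L\subseteq B^2\subseteq K^2$ and $B$ is a domain, $L$ is torsion-free. Since $T$ has generic rank two, $L\otimes_B K=K^2$, so $L$ has rank two.

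Next I check that $\eta\otimes K$ is an isomorphism. Both $E\otimes_A B$ and $B$ are modules over the fraction field $K=\operatorname{Frac}(A)=\operatorname{Frac}(B)$, using the birationality in Theorem \ref{projected-hypersurface-D}. Hence
$$\operatorname{coker}(\Phi_B)\otimes_B K\cong \operatorname{coker}(\Phi_K)\cong E\otimes_A K,$$
which has $K$-dimension equal to $\operatorname{rank}_A E=2$. The map $\eta\otimes K$ is a surjection $K^2\to K^2$, so it is an isomorphism. Now Lemma \ref{birational-lattice-double-dual} applies with $r=2$, using that $B=R_{n,q}$ is a normal domain, and gives $(E\otimes_A B)^{\vee\vee}\cong L^{\vee\vee}$. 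The final sentence of the statement follows by composing with the assumed isomorphism $L^{\vee\vee}\cong U$.

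The only step needing care is the rank identification. Specifically, I need $\dim_K(E\otimes_A K)=2$, which is the rank of $\Phi$ over $K$ being $2$. This holds because $\operatorname{rank}_A E=2$ is computed over $\operatorname{Frac}(A)$, and $A\to B$ induces an identification of fraction fields, so base change to $K$ is the same whether it is taken through $A$ or through $B$. I would state this identification explicitly, since it is the one point where birationality of the projection is used. Everything else is formal.
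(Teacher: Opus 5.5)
Your proposal is correct and follows the paper's proof: you use the surjection $\eta\colon\operatorname{coker}(\Phi_B)\to L$ induced by $T\Phi_B=0$, check that both sides have rank two so that $\eta\otimes K$ is an isomorphism, and then apply Lemma \ref{birational-lattice-double-dual}. Your explicit verification that $\dim_K(E\otimes_A K)=2$ fills in a step the paper leaves implicit. That step in fact only needs $A\to B$ to be injective, not birational.
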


\begin{proof}
The relation $T\Phi_B=0$ induces a surjective homomorphism
$\eta: \operatorname{coker}(\Phi_B)\longrightarrow L$. Since $E\otimes_A B\cong\operatorname{coker}(\Phi_B)$
the source has rank two. By assumption, $T$ has generic rank two, so $L$ also has rank two. Hence $\eta$ becomes an isomorphism after tensoring with $K$. Lemma \ref{birational-lattice-double-dual} gives
$$(\operatorname{coker}(\Phi_B))^{\vee\vee}\cong L^{\vee\vee}$$
and we get 
$$(E\otimes_A B)^{\vee\vee} \cong L^{\vee\vee}.$$
The final assertion is easy.
\end{proof}

\noindent Combining the Koszul construction with diagonal cancellation gives the following recovery criterion.

\begin{thm}\label{rank-two-diagonal-recovery}
Let $U$ be an indecomposable rank-two special CM $B$-module. Assume that, after a change of basis in $K^2$ and multiplication by an element of $K^\times$, $U$ admits a four-generator fractional lattice presentation
$$
U\cong \operatorname{im}\begin{pmatrix}
Q_1&Q_2&Q_3&0\\
-P_2&P_1&0&Q_3
\end{pmatrix}
\subseteq K^2$$ where the $P_i,Q_i$ are monomials satisfying
$P_1Q_1+P_2Q_2+P_3Q_3=mF$ for some monomial $m$. Let $(\widetilde\Phi,\widetilde\Psi)$ be the corresponding three-term
Koszul matrix factorization of $mF$. Suppose that there exists a diagonal monomial matrix $D$ such that
$\Phi=\widetilde\Phi D^{-1},\qquad \Psi=D\frac{\widetilde\Psi}{m}$ have entries in $S$, and assume that
$E:=\operatorname{coker}(\Phi)$ has rank two. Assume moreover that a $2\times4$ submatrix $T$ of
$\Psi_B$ has generic rank two and satisfies $C\,\operatorname{im}(T)\cong U$ for some $C\in\operatorname{GL}_2(K)$. Then
$(E\otimes_A B)^{\vee\vee}\cong U$. 
\end{thm}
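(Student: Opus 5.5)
The plan is to chain three earlier results: Proposition \ref{diagonal-cancellation-mf} to get a genuine matrix factorization of $F$, Proposition \ref{general-rank-two-lattice-comparison} (via Lemma \ref{birational-lattice-double-dual}) to identify $(E\otimes_A B)^{\vee\vee}$ with $L^{\vee\vee}$ for $L=\operatorname{im}(T)$, and finally the reflexivity of $U$ to replace $L^{\vee\vee}$ by $U$.

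First, since $P_1Q_1+P_2Q_2+P_3Q_3=mF$, the three-term Koszul pair $(\widetilde\Phi,\widetilde\Psi)$ satisfies $\widetilde\Phi\widetilde\Psi=\widetilde\Psi\widetilde\Phi=mF\,\textnormal{Id}_4$. This is the same direct multiplication as in Proposition \ref{monomial-koszul-mf}, with $F$ replaced by $mF$. Because $\Phi$ and $\Psi$ are assumed to have entries in $S$, Proposition \ref{diagonal-cancellation-mf} shows that $(\Phi,\Psi)$ is a matrix factorization of $F$. Hence $E=\operatorname{coker}(\Phi)$ is an MCM $A$-module, and $E\otimes_A B\cong\operatorname{coker}(\Phi_B)$ by right exactness of the tensor product.

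Next, I need $T\Phi_B=0$. Over $S$ we have $\Psi\Phi=F\,\textnormal{Id}_4$, so applying $\iota$ entrywise gives $\Psi_B\Phi_B=0$ in $B$, since $F\mapsto 0$. Any two rows of $\Psi_B$, in particular $T$, then annihilate $\Phi_B$. By hypothesis $E$ has rank two and $T$ has generic rank two. Proposition \ref{general-rank-two-lattice-comparison} therefore applies and yields $(E\otimes_A B)^{\vee\vee}\cong L^{\vee\vee}$, where $L=\operatorname{im}(T)\subseteq B^2$.

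Finally, multiplication by $C\in\operatorname{GL}_2(K)$ is a $B$-linear isomorphism $L\to C\,L$. Hence $L\cong C\operatorname{im}(T)\cong U$, and so $L^{\vee\vee}\cong U^{\vee\vee}$. Since $U$ is MCM over the two-dimensional normal domain $B$, it is reflexive, so $U^{\vee\vee}\cong U$. This gives $(E\otimes_A B)^{\vee\vee}\cong U$.

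The main point requiring care is the rank bookkeeping over $B$. I must check that $\operatorname{coker}(\Phi_B)$ has rank exactly two over $K$, and this is where the hypothesis $\operatorname{rank}E=2$ is used. Since $K=\operatorname{Frac}(A)=\operatorname{Frac}(B)$ (Theorem \ref{projected-hypersurface-D}), $\Phi_B\otimes K=\Phi\otimes K$. I must also check that the surjection onto $L$ is therefore generically an isomorphism. Both facts follow because a surjection between $K$-vector spaces of equal dimension two is an isomorphism.

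The Koszul lattice presentation of $U$ given in the hypothesis is not actually needed for the conclusion. It only serves to motivate the choice of $m$, $D$ and $T$; the hypothesis $C\operatorname{im}(T)\cong U$ alone suffices.
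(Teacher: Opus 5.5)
Your proposal is correct and follows the paper's proof essentially step for step. Diagonal cancellation (Proposition \ref{diagonal-cancellation-mf}) gives a matrix factorization of $F$; the rows of $\Psi_B$ annihilate $\Phi_B$, and Proposition \ref{general-rank-two-lattice-comparison} then yields $(E\otimes_A B)^{\vee\vee}\cong \operatorname{im}(T)^{\vee\vee}$; finally the identification via $C\in\operatorname{GL}_2(K)$ and the reflexivity of $U$ finish the argument, with your remarks on rank bookkeeping and on the unused Koszul presentation of $U$ making explicit what the paper leaves implicit.
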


\begin{proof}
By Proposition \ref{diagonal-cancellation-mf}, $(\Phi,\Psi)$ is a matrix factorization of $F$. Since $T$ consists of
rows of $\Psi_B$, the relation $\Psi_B\Phi_B=0$ gives $T\Phi_B=0$.
By assumption $T$ has generic rank two. Proposition \ref{general-rank-two-lattice-comparison} gives
$$(\operatorname{coker}(\Phi)\otimes_A B)^{\vee\vee}\cong (\operatorname{im}(T))^{\vee\vee}$$
Since $C\in\operatorname{GL}_2(K)$ identifies $\operatorname{im}(T)$ with $U$ as a fractional $B$-lattice and $U$
is reflexive,  $(\operatorname{im}(T))^{\vee\vee}\cong U$. Hence we have
$$(\operatorname{coker}(\Phi)\otimes_A B)^{\vee\vee}\cong U.$$
\end{proof}

\begin{cor}\label{D75-all-specials} For $\mathbb D_{7,5}$ every indecomposable special CM module is
obtained from a matrix factorization of the projected hypersurface
$F=Y^4+X^3Y^2-XZ^2$ by extension to the normalization followed by reflexive hull. More precisely, the rank-one special modules
$W_+, W_-, W_1$ are obtained from the $2\times2$ matrix factorizations constructed
above, while the two rank-two special modules satisfy
$$(E_1\otimes_A B)^{\vee\vee}\cong U_1=V_3, \qquad (E_2\otimes_A B)^{\vee\vee}\cong U_2=V_1$$
Here $E_1$ is obtained directly from a monomial three-term Koszul factorization of $F$, whereas $E_2$ is obtained from a monomial
Koszul factorization of $XF$ by the diagonal cancellation $D_X=\operatorname{diag}(1,X,1,X)$.
\end{cor}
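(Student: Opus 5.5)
The corollary collects results already established, so the proof will be an assembly argument. First I will fix the list of indecomposable non-free special CM modules of $\mathbb D_{7,5}$. Since $\frac75=[2,2,3]$, the resolution graph of Figure \ref{graf-Dnq} has five vertices: the two short $(-2)$-branches, the two coefficient-$2$ vertices of weight $-2$ on the long branch, and the end vertex of weight $-3$. By Wunram's theorem, the non-free indecomposable special modules are in bijection with these curves, and their ranks are the fundamental-cycle coefficients. Hence there are three rank-one modules, namely $W_+$, $W_-$ and $W_1=W_{i_r}$ from Theorem \ref{wemysd2} (here $\nu=2$ and $i_r=1$), and the two rank-two modules $U_1=V_3$ and $U_2=V_1$. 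The free module $B$ is the cokernel of a trivial factorization, so it can be ignored or noted separately.

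For the rank-one part I will specialise to $a_2=4$, $\alpha=\beta=s_3=1$, $\widehat s_3=0$, which gives $N=4$, $H=0$, $L=2$. With these values:
\begin{itemize}
\item Proposition \ref{W1-MF0} gives $W_1\cong(z,y)$. However, the index $2\widehat s_3-1=-1$ lies outside the parameter range of the third family because $\widehat s_3=0$. I will therefore use instead $M^{(2)}_{j}$ or $M^{(1)}_{i,j}$ directly. By Proposition \ref{alternativeideals-D}, $M^{(1)}_{1,1}\cong(y^{2\alpha-1},z^{2\beta-1})=(y,z)$, and $(1,1)$ is in range since $0\le 1\le 1$. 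Proposition \ref{normalization-MF-D} then yields $W_1$.
\item Since $\alpha\ge s_3$, Corollary \ref{Wpm-symmetry} applies with $(i,j)=(0,1)$. The cokernels of $A^{(1)}_{0,1}$ and $B^{(1)}_{0,1}$ are $(y^2,z)$ and $(y^2,xz)$, and their reflexive normalizations are $W_+$ and $W_-$.
\end{itemize}
At each step I will check that the indices are admissible. This, together with reconciling the two different projected presentations of $W_\pm$ given in Propositions \ref{Wplus-D} and \ref{Wpm-projected}, is where I expect the only care to be needed. I would rely on the presentation from Proposition \ref{Wplus-D}/Corollary \ref{Wpm-symmetry}, since it is the one using the coordinate convention $y=Z_{e-1}$, $z=Z_e$ of Lemma \ref{elimination-D}.

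For the rank-two part I will quote Example \ref{D75-example}.
\begin{itemize}
\item For $U_1$, the parameter $\theta_1=(2,1,1,0,0,1)$ lies in $\Theta_{\min}$. Proposition \ref{minimal-koszul-rank-two} shows $E_1$ has rank two, and Proposition \ref{lattice-general-koszul} (equivalently Corollary \ref{koszul-special-test}), combined with the lattice identification of $V_3$ in the example, gives $(E_1\otimes_AB)^{\vee\vee}\cong V_3$.
\item For $U_2$, I will apply Theorem \ref{rank-two-diagonal-recovery} with $m=X$, $D=D_X$ and $T$ equal to the first two rows of $\Psi_{2,B}$, taking $C=\operatorname{diag}(X,1)$.
\end{itemize}
The hypotheses to verify are:
\begin{enumerate}
\item $\Phi_2$ and $\Psi_2$ are polynomial, which holds by the explicit matrices;
\item $T$ has generic rank two, since its first $2\times2$ minor is $-XZ^2+Y^4$, which is $-X^3Y^2\neq0$ in $B$;
\item $E_2$ has rank two, which follows from the determinant $\det\Phi_2=F^2$ up to a unit, as in the rank computation of Proposition \ref{minimal-koszul-rank-two} applied to the nonvanishing $2\times2$ minor.
\end{enumerate}
Combining the two parts exhausts Wunram's list, which proves the corollary. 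The main obstacle is the exact lattice identifications of $V_3$ and $V_1$. These are computational and already carried out in the example, so I will take them as given.
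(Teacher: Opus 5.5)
Your plan is the same assembly the paper relies on: the corollary has no separate proof and simply collects Propositions \ref{W1-MF0}--\ref{Wpm-symmetry} and Example \ref{D75-example}. Your replacement of the out-of-range index in Proposition \ref{W1-MF0} by $M^{(1)}_{1,1}\cong(y,z)$ is correct, since $\widehat s_3=0$ makes the third family empty.

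\textbf{The gap: the step for $W_\pm$ fails.} Corollary \ref{Wpm-symmetry} rests on Lemma \ref{Wplus-presentation}. Here $a_2=4$ is even, so $\eta=u^5+v^5$ and
\[
\eta(u^5+v^5)=(u^5+v^5)^2=v_2+2v_1^5\neq v_2 .
\]
Dropping $2v_1^5$ changes the isomorphism class. Concretely, take the normalization of Example \ref{D75-example}, $X=a^4$, $Y=\tfrac12aD$, $Z=\tfrac14SD$. Then
\[
(Y,Z)B=\tfrac14D\,(2a,S)B,\qquad (Y^2,Z)B=\tfrac14D\,(a^2D,S)B,\qquad (Y^2,XZ)B=\tfrac14a^2D\,(D,a^2S)B .
\]
In each case the two cofactors are coprime in $\mathbb C[u,v]$ and share a character, because $aD=2Y$ and $S/a=2Z/Y$ are invariant. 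So each reflexive hull is the full module of covariants of that character. Consequently:
\begin{itemize}
\item $\operatorname{coker}A^{(1)}_{0,1}$ normalizes to $W_1$, the same module you get from $M^{(1)}_{1,1}$, not to $W_+$.
\item $\operatorname{coker}B^{(1)}_{0,1}$ normalizes to the module of character $\chi(a)^{-1}$, which is not special.
\item The alternative presentations $(y,z^2)$ and $(xy,z^2)$ from Proposition \ref{Wpm-projected} also normalize to $W_1$, so choosing between the two coordinate conventions cannot rescue the step.
\end{itemize}

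\textbf{This is structural, not an indexing accident.} The elements $X,Y,Z$ are products of powers of $a,D,S$, and so are $Y^2+X^3=\tfrac14a^2S^2$ and $X^2Y^2-Z^2=-\tfrac1{16}D^4$. The factors $u^5\pm v^5$ enter only through $D=(u^5-v^5)(u^5+v^5)$. Write any ideal coming from Proposition \ref{mf-Dnq} as $h\,(g_1,g_2)$ with $h$ the gcd. Its reflexive hull then has character $\chi(h)^{-1}\in\langle\chi(a)\rangle$, since $\chi(D)=\chi(a)^{-1}$ and $\chi(S)=\chi(a)$.

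On the other hand, the two generators of $W_+$ force $\chi_+^2=\chi(a)^2\chi(D)=\chi(a)$. Since $a^4=X$ while $a^2\notin B$, $\chi(a)$ has order $4$. Hence $\chi_+$ has order $8$ and lies outside $\langle\chi(a)\rangle$; likewise $\chi_-=\chi_+\chi(a)^{-2}$. So no factorization in Proposition \ref{mf-Dnq} recovers $W_\pm$, and the paper's own assembly has the same defect.

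\textbf{A repair.} Rewrite $F=Y^4-X(Z-XY)(Z+XY)$ and use $Z\pm XY=\tfrac14D(u^5\pm v^5)^2$. The matrices
\[
\begin{pmatrix}Y^2&X(Z-XY)\\ Z+XY&Y^2\end{pmatrix},\qquad
\begin{pmatrix}Y^2&Z-XY\\ X(Z+XY)&Y^2\end{pmatrix}
\]
each have determinant $F$; pair each with its adjugate. Their cokernels are
\[
(Y^2,Z+XY)=\tfrac14D(u^5+v^5)\,W_+,\qquad (Y^2,X(Z+XY))=\tfrac14a^2D(u^5+v^5)\,W_- .
\]
With these, the first sentence of the corollary holds. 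The ``more precisely'' claim, that $W_\pm$ come from the $2\times2$ factorizations constructed above, must be changed. Your rank-two part can stay as it is: it rests on the lattice identifications of the example exactly as the paper's does.
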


\noindent The example exhibits the two mechanisms that occur in the rank-two construction. A special module may arise directly
from the reflexive normalization of a monomial Koszul factorization of the projected equation, or it may first appear as the lattice of a
Koszul factorization of a monomial multiple of $F$ and then be recovered from a matrix factorization of $F$ by diagonal
cancellation. In either case, Proposition \ref{general-rank-two-lattice-comparison} reduces the identification
of the normalized cokernel to the corresponding rank-two fractional lattice. Together with the $2\times2$ constructions for the rank-one special modules, this completes the matrix-factorization description for $\mathbb D_{7,5}$ and provides the general recovery criterion used for rank-two modules.

\noindent The example shows that rank-two special CM modules may arise in two different ways from the projected hypersurface. The module $U_1=V_3$ is obtained directly from a monomial three-term Koszul matrix factorization of $F$, whereas $U_2=V_1$ is obtained from a Koszul factorization of the monomial multiple $XF$ by a diagonal
modification. In both cases, extension to the normalization followed by reflexive hull recovers the corresponding special CM module. 

This identification can be imposed for the families of dihedral quotient singularities as we discuss below.

\subsection{Classical $\mathbb D_n$ as a special case}

\noindent The rational double point of type $\mathbb D_n$, $n\geq4$, is the hypersurface
$R={\mathbb C[x,y,z]}/{(x^{n-1}+xy^2+z^2)}$. Since $R$ is a hypersurface, its MCM modules are described by matrix factorizations of the defining equation. The indecomposable matrix
factorizations of type $\mathbb D_n$ are classical and are explicitly described in \cite{KajiuraSaito} ( see also \cite{Yoshino}). 

\noindent Up to isomorphism, there are $n$ nonfree indecomposable MCM
$R$-modules $M_1,\ldots, M_n$. The modules $M_1,M_{n-1},M_n$ have rank one and are represented by
minimal $2\times2$ matrix factorizations and $M_2,\ldots, M_{n-2}$ have rank two and are represented by minimal
$4\times4$ matrix factorizations. Thus there are exactly $n-3$ indecomposable rank-two MCM modules, corresponding to the vertices which occur with coefficient $2$ in the fundamental cycle.

\noindent This is precisely the rank pattern appearing in the general dihedral case: vertices with coefficient $1$ in the fundamental cycle correspond to rank-one special CM modules, while vertices with
coefficient $2$ correspond to rank-two special CM modules.

\end{document}